\let\Re=\undefined\DeclareMathOperator*{\Re}{Re}
\let\Im=\undefined\DeclareMathOperator*{\Im}{Im}
\def\C{{\mathbb C}}
\def\R{{{\mathbb R}}}
\def\Z{{{\mathbb Z}}}
\DeclareMathOperator*{\diverge}{div}
\newcommand{\qtq}[1]{\quad\text{#1}\quad}
\newcommand{\eps}{{\varepsilon}}
\newcommand{\Oh}{\hbox{\O}}
\newcommand{\Tmax}{{T_{\textrm{max}}}}
\newcommand{\Tmin}{{T_{\textrm{min}}}}
\newcommand{\HLM}{{\mathcal M}}
\newcommand{\hi}{{\textit{hi}}}
\newcommand{\lo}{{\textit{lo}}}
\newcommand{\uhi}{{u_{\hi}}}
\newcommand{\ulo}{{u_{\lo}}}
\newcommand{\pl}{{P_{\lo}}}
\newcommand{\ph}{{P_{\hi}}}
\newcommand{\ef}{{\mathcal F}}
\newcommand{\smudge}{\mathcal S}
\theoremstyle{plain}
\newtheorem{theorem}{Theorem}
\newtheorem{proposition}[theorem]{Proposition}
\newtheorem{lemma}[theorem]{Lemma}
\newtheorem{corollary}[theorem]{Corollary}
\theoremstyle{definition}
\newtheorem{definition}[theorem]{Definition}
\newtheorem{remark}[theorem]{Remark}
\newenvironment{CI}{\begin{list}{{\ $\bullet$\ }}{%
\setlength{\topsep}{0mm}\setlength{\parsep}{0mm}\setlength{\itemsep}{0mm}%
\setlength{\labelwidth}{0mm}\setlength{\itemindent}{0mm}\setlength{\leftmargin}{0mm}%
\setlength{\labelsep}{0mm} }}{\end{list}}
\numberwithin{equation}{section}
\numberwithin{theorem}{section}
\begin{document}

\title[Global well-posedness for the quintic NLS in 3D]{Global well-posedness and scattering for the defocusing quintic NLS in three dimensions}
\author{Rowan Killip}
\address{Department of Mathematics\\UCLA\\Los Angeles, CA 90095}
\author{Monica Vi\c{s}an}
\address{Department of Mathematics\\UCLA\\Los Angeles, CA 90095}

\begin{abstract}
We revisit the proof of global well-posedness and scattering for the defocusing energy-critical NLS in
three space dimensions in light of recent developments.  This result was obtained previously by Colliander, Keel, Staffilani, Takaoka, and Tao \cite{CKSTT:gwp}.
\end{abstract}

\maketitle

\section{Introduction}

The defocusing quintic nonlinear Schr\"odinger equation,
\begin{equation}\label{nls}
i u_t +\Delta u = |u|^4 u,
\end{equation}
describes the evolution of a complex-valued function $u(t,x)$ of spacetime $\R_t\times \R^3_x$.  This evolution conserves \emph{energy}:
\begin{equation}\label{energy}
E(u(t)):=\int_{\R^3} \tfrac{1}{2}|\nabla u (t,x)|^2 + \tfrac{1}{6}|u(t,x)|^6 \,dx.
\end{equation}

By Sobolev embedding, $u(0)$ has finite energy if and only if $u(0)\in\dot H^1_x(\R^3)$, which is the space of initial data that we consider.  This is also a scale-invariant space;
both the class of solutions to \eqref{nls} and the energy are invariant under the scaling symmetry
\begin{equation}\label{scaling}
u(t,x)\mapsto u^\lambda (t,x) :=\lambda^{1/2} u(\lambda^2t, \lambda x).
\end{equation}
For this reason, the equation is termed \emph{energy-critical}.

A function $u: I \times \R^3 \to \C$ on a non-empty time interval $I\ni 0$ is called a \emph{strong solution} to \eqref{nls}
if it lies in the class $C^0_t \dot H^1_x(K \times \R^3) \cap L^{10}_{t,x}(K \times \R^3)$ for all compact $K \subset I$, and obeys the Duhamel formula
\begin{align}\label{old duhamel}
u(t) = e^{it\Delta} u(0) - i \int_{0}^{t} e^{i(t-s)\Delta}|u(s)|^4 u(s) \, ds,
\end{align}
for all $t \in I$.  We say that $u$ is a \emph{maximal-lifespan solution} if the solution cannot be extended (in this class) to any strictly larger interval.

Our main result is a new proof of the following:

\begin{theorem}[Global well-posedness and scattering]\label{T:main}\relax\ Let $u_0\in \dot H^1_x(\R^3)$.  Then there exists a unique global strong solution
$u\in C_t^0\dot H^1_x(\R\times \R^3)$ to \eqref{nls} with initial data $u(0)=u_0$.  Moreover, this solution satisfies
\begin{align}\label{E:STB}
\int_{\R}\int_{\R^3} |u(t,x)|^{10}\, dx\, dt \leq C\bigl(\|u_0\|_{\dot H^1_x}\bigr).
\end{align}
Further, scattering occurs: (i) there exist asymptotic states $u_\pm\in \dot H^1_x$ such that
\begin{align}\label{scattering}
\bigl\|u(t)-e^{it\Delta}u_\pm\bigr\|_{\dot H^1_x} \to 0 \quad \text{as}\quad t\to \pm\infty
\end{align}
and (ii) for any $u_+ \in \dot H^1_x$ $($or $u_- \in \dot H^1_x$$)$ there exists a unique global solution $u$ to \eqref{nls} such that \eqref{scattering} holds.
\end{theorem}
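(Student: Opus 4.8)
The plan is to give a concentration-compactness/rigidity proof in the spirit of Kenig--Merle, which replaces the induction-on-energy scheme of \cite{CKSTT:gwp}. The qualitative assertions of Theorem~\ref{T:main} will all be consequences of the spacetime bound \eqref{E:STB}, so the bulk of the work is to prove \eqref{E:STB}. The preliminary inputs I would set up first are the usual Strichartz-based perturbation theory for \eqref{nls}: small-data global well-posedness and scattering, the criterion that a maximal-lifespan solution with finite $L^{10}_{t,x}$ norm extends globally and scatters, and --- most importantly --- a long-time stability lemma allowing an approximate solution with suitably small Strichartz-dual error to be perturbed to a genuine nearby solution. Given these, local well-posedness and uniqueness follow at once, the forward scattering statement (i) follows from \eqref{E:STB} and the dispersive decay of the free evolution, and the wave-operator statement (ii) follows by solving the final-state problem via a contraction mapping and then invoking \eqref{E:STB}.

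To prove \eqref{E:STB} I would argue by contradiction, setting
\[
E_c := \sup\bigl\{ E>0 : \ \|u\|_{L^{10}_{t,x}(\R\times\R^3)} \le C(E) \ \text{for every solution } u \text{ with } E(u)\le E \bigr\}
\]
and assuming $E_c<\infty$. Using the linear profile decomposition in $\dot H^1_x$ adapted to the scaling symmetry \eqref{scaling} and to space translations, together with the stability lemma, I would extract a \emph{minimal-energy almost periodic blowup solution}: a maximal-lifespan solution $u$ with $E(u)=E_c$ and $\|u\|_{L^{10}_{t,x}}=\infty$ (on at least one half of its lifespan) whose orbit $\{N(t)^{-1/2}u(t,x(t)+N(t)^{-1}\,\cdot\,)\}$ is precompact in $\dot H^1_x$ for some frequency scale $N(t)>0$ and spatial center $x(t)\in\R^3$. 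The mechanism is standard: if two or more profiles in the decomposition were nontrivial, subadditivity of the energy makes each of them sub-threshold data, hence globally bounded in $L^{10}_{t,x}$, and the stability lemma then bounds $\|u\|_{L^{10}_{t,x}}$, contradicting minimality; so a single profile survives and gives the desired almost periodic solution.

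The rigidity step is to show no such $u$ exists. After a local-constancy reduction for $N(t)$, one confronts three scenarios. (i) A \emph{finite-time blowup} solution, $\Tmax<\infty$ or $\Tmin>-\infty$: here $N(t)\to\infty$ as $t$ approaches the blowup time and $x(t)$ converges; the ``no-waste'' Duhamel formula, in which the free evolution drops out in a weak limit, together with the compactness forces all of the energy to concentrate at a single point, which a local mass/energy estimate shows is impossible unless $u\equiv0$. (ii) A \emph{low-to-high frequency cascade}, a global solution with $\liminf_{t\to\pm\infty}N(t)=0$: the no-waste Duhamel formula yields extra low-frequency decay, forcing both $\|\nabla u(t_n)\|_{L^2_x}$ and $\|u(t_n)\|_{L^6_x}$ to $0$ along a sequence $t_n$ with $N(t_n)\to0$; by conservation of energy $E(u)=0$, hence $u\equiv0$. (iii) A \emph{soliton-like} solution, global with $N(t)\equiv1$: a double-Duhamel argument upgrades the compactness to $u\in L^\infty_t(L^2_x\cap\dot H^1_x)$, after which the interaction Morawetz inequality provides a spacetime bound of the form $\iint_{[0,T]\times\R^3}|u|^4\,dx\,dt\lesssim \|u\|_{L^\infty_t L^2_x}^2\|u\|_{L^\infty_t\dot H^1_x}^2$ (up to lower-order terms) uniformly in $T$, whereas compactness forces the left side to grow linearly in $T$ --- a contradiction. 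A suitably frequency-localized interaction Morawetz estimate, as in \cite{CKSTT:gwp}, can be used here in place of the additional-regularity step.

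I expect the \textbf{main obstacle} to be the construction of the minimal almost periodic solution: carrying out the linear profile decomposition in the scaling-critical space $\dot H^1_x$, passing from linear to \emph{nonlinear} profiles (each evolved using the sub-threshold hypothesis, with care about profiles concentrating at widely different scales and about the non-compactness of the free-evolution orbit), and doing the bookkeeping needed to feed the superposition of nonlinear profiles into the stability lemma so that exactly one profile is seen to survive. The rigidity arguments, while they require the PDE-specific inputs above --- the no-waste Duhamel formula, local conservation laws, and a Morawetz/virial inequality --- are comparatively robust once the almost periodic solution is in hand; among them the preclusion of finite-time blowup is the subtlest, since mass is not scaling-invariant at the energy-critical regularity and so the concentrating bubble must be shown trivial by a more delicate local argument.
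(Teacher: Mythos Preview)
Your overall framework---reduce to an almost periodic minimal blowup solution via concentration compactness, then rule it out---matches the paper exactly; the paper merely uses Dodson's dichotomy on $\int N(t)^{-1}\,dt$ (Theorem~\ref{T:enemies}) in place of the three-enemies trichotomy, which is a cosmetic difference.

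The genuine gap is in your rigidity step. Your claim in scenario~(iii) that a double-Duhamel argument upgrades the soliton to $u\in L^\infty_t L^2_x$ cannot hold in three dimensions: the double-Duhamel trick is insensitive to the sign of the nonlinearity, and the focusing ground state $W(x)=(1+\tfrac13|x|^2)^{-1/2}$ is an almost periodic solution with $N(t)\equiv1$ that lies outside $L^2_x(\R^3)$; the paper makes exactly this point in Remark~\ref{R:no better}. Your fallback---a frequency-localized interaction Morawetz identity---is indeed the right route, but in $d=3$ the quintic nonlinearity generates error terms that are not controllable by ordinary Strichartz estimates, and closing those errors is the main technical content of the paper. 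What is actually required are the long-time Strichartz estimates of Theorem~\ref{T:LTS}, proved via a new maximal-in-frequency Strichartz inequality (Proposition~\ref{P:MaxS}), which bound quantities such as $\|\nabla u_{\leq N}\|_{L^2_tL^6_x}$ by $(1+N^3K)^{1/2}$ with $K=\int_I N(t)^{-1}\,dt$; these are what tame the Morawetz error terms in Section~\ref{S:IM} and what yield finite mass in the cascade scenario (Lemma~\ref{L:mass}). Your argument for~(ii) via the no-waste Duhamel formula alone has the same defect---in $d=3$ one cannot extract the needed low-frequency decay without Theorem~\ref{T:LTS}. Finally, you have inverted the difficulty: the extraction of the almost periodic solution is by now routine (see \cite{KM:NLS,Berbec,ClayNotes}), while the rigidity in $d=3$ is where all the new work lies.
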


Theorem~\ref{T:main} was proved by Colliander, Keel, Staffilani, Takaoka, and Tao in the ground-breaking paper
\cite{CKSTT:gwp}.  The key point is to prove the spacetime bound \eqref{E:STB}; scattering is an easy
consequence of this.  Note also that the solution described in Theorem~\ref{T:main} is in fact unique in the larger class of $C^0_t \dot H^1_x$ functions obeying
\eqref{old duhamel}; this unconditional uniqueness statement is proved in \cite[\S16]{CKSTT:gwp} by adapting earlier work.

The paper \cite{CKSTT:gwp} advanced the induction on energy technique, introduced by Bourgain in \cite{borg:scatter}, and presaged many recent developments in
dispersive PDE at critical regularity.  The argument may be outlined as follows: (i) If a bound of the form \eqref{E:STB} does not hold, then there must be a minimal
almost-counterexample, that is, a minimal-energy solution with (pre-specified) enormous spacetime norm. (ii) By virtue of its minimality, such a solution must have good
tightness and equi-continuity properties.  (iii) To be consistent with the interaction Morawetz identity such a solution must undergo a dramatic change of (spatial) scale
in a short span of time.  (iv)  Such a rapid change is inconsistent with simultaneous conservation of mass and energy.

As just described, the argument appears to be by contradiction, but this is not the case.  In fact, it is entirely quantitative, showing that in order to achieve such
a large spacetime norm, the solution must have at least a certain amount of energy.  The energy requirement diverges as the spacetime norm diverges and so yields an
effective bound for the function $C$ appearing in \eqref{E:STB}.  This style of argument adapts also to other equations and dimensions; see, for example,
\cite{Nakanishi, RV, tao:gwp radial, thesis:art}.

The downside to the induction on energy argument is its complexity.  It is monolithic, as opposed to modular; the value of a small parameter introduced at the very beginning
of the proof is not determined until the very end.  In recent years, the induction on energy argument has been supplanted by a related contradiction argument that is completely modular
and is much easier to understand; it is not quantitative.

The genesis of this new method comes from the discovery of Keraani, \cite{keraani-l2}, that the estimates underlying
the proof that minimal almost-counterexamples have good tightness/equicontinuity properties can be pushed further to
show that failure of Theorem~\ref{T:main} guarantees the existence of a \emph{minimal} counterexample. This insight was
first applied to the well-posedness problem in an important paper of Kenig and Merle, \cite{KM:NLS}, which considered
the focusing equation with radial data in dimensions three, four, and five.  Subsequent papers (by a wide array of authors)
have greatly refined and expanded this methodology.

In this paper, we revisit the proof of Theorem~\ref{T:main} using this `minimal criminal' approach, which, we believe,
results in significant expository simplification.  We will also endeavour to convey that much of the original argument
lives on, both in spirit and in the technical details, by explicit reference to \cite{CKSTT:gwp} as well as by
maintaining their notations, as much as possible.

In some very striking recent work \cite{Dodson:d>2,Dodson:2D,Dodson:1D}, Dodson has proved the analogue of Theorem~\ref{T:main} for the \emph{mass-critical} nonlinear Schr\"odinger equation
in arbitrary dimension. The most significant difference between \cite{CKSTT:gwp} and the argument presented here comes from the adaptation of some of his ideas (present already in the
first paper \cite{Dodson:d>2}) to the problem \eqref{nls}.  We postpone a fuller discussion of these matters until we have described some of the key steps in the proof.

\subsection{Outline of the proof}\label{SS:outline}

We argue by contradiction.  Simple contraction mapping arguments show that Theorem~\ref{T:main} holds for solutions
with small energy; thus, if the theorem were not to hold there must be a transition energy above which the energy no
longer controls the spacetime norm.  The first step in the argument is to show that there is a minimal counterexample
and that, by virtue of its minimality, this counterexample has good compactness properties.

\begin{definition}[Almost periodicity]\label{D:ap}
A solution $u\in L^\infty_t \dot H^1_x(I\times\R^3)$ to \eqref{nls} is said to be \emph{almost periodic (modulo symmetries)} if there exist functions $N: I \to \R^+$, $x:I\to \R^3$,
and $C: \R^+ \to \R^+$ such that for all $t \in I$ and $\eta > 0$,
\begin{equation}\label{E:ap}
\int_{|x-x(t)| \geq C(\eta)/N(t)} \bigl|\nabla u(t,x)\bigr|^2\, dx + \int_{|\xi| \geq C(\eta) N(t)} |\xi|^{2}\, | \hat u(t,\xi)|^2\, d\xi\leq \eta .
\end{equation}
We refer to the function $N(t)$ as the \emph{frequency scale function} for the solution $u$, to $x(t)$ as the \emph{spatial center function}, and to $C(\eta)$ as the
\emph{modulus of compactness}.
\end{definition}

\begin{remark} \label{R:small freq}
Together with boundedness in $\dot H^1_x$, the tightness plus equicontinuity statement \eqref{E:ap} illustrates that almost
periodicity is equivalent to the (co)compact\-ness of the orbit modulo translation and dilation symmetries.  In particular,
from compactness we see that for each $\eta>0$ there exists $c(\eta)>0$ so that for all $t\in I$,
$$
\int_{|x-x(t)| \leq c(\eta)/N(t)} \bigl|\nabla u(t,x)\bigr|^2\, dx + \int_{|\xi| \leq c(\eta) N(t)} |\xi|^{2}\, | \hat u(t,\xi)|^2\, d\xi\leq \eta .
$$
Similarly, compactness implies
$$
\int_{\R^3} |\nabla u(t,x)|^2\, dx \lesssim_u \int_{\R^3} |u(t,x)|^6\, dx
$$
uniformly for $t\in I$.  This last observation plays the role of Proposition 4.8 in \cite{CKSTT:gwp}.
\end{remark}

With these preliminaries out of the way, we can now describe the first major milestone in the proof of
Theorem~\ref{T:main}:

\begin{theorem}[Reduction to almost periodic solutions, \cite{KM:NLS, Berbec}]\label{T:reduct}
{\hskip 0em plus 2em minus 0em}Suppose Theorem~\ref{T:main} failed.  Then there exists a maximal-lifespan solution $u:I\times\R^3\to \C$ to
\eqref{nls} which is almost periodic and blows up both forward and backward in time in the sense that for all $t_0\in
I$,
$$
\int_{t_0}^{\sup I}\int_{\R^3} |u(t,x)|^{10}\, dx\, dt=\int_{\inf I}^{t_0}\int_{\R^3} |u(t,x)|^{10}\, dx\, dt=\infty.
$$
\end{theorem}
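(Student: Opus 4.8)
The plan is to follow the now-standard concentration-compactness route, using the linear and stable nonlinear profile decompositions for the energy-critical Schr\"odinger equation in $\dot H^1_x(\R^3)$. First I would introduce the quantity
\[
L(E_0) := \sup\Bigl\{ \|u\|_{L^{10}_{t,x}(I\times\R^3)}^{10} : u \text{ a solution on } I \text{ with } \sup_{t\in I} E(u(t)) \leq E_0 \Bigr\},
\]
and record that small-data theory plus stability gives $L(E_0)<\infty$ for $E_0$ below some threshold, while the assumed failure of Theorem~\ref{T:main} forces a finite critical energy $E_c := \sup\{ E_0 : L(E_0) < \infty\} \in (0,\infty)$. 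Above $E_c$ the spacetime norm is infinite; the goal is to extract an almost periodic solution living exactly at the energy $E_c$.

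Next I would take a sequence of solutions $u_n : I_n\times\R^3\to\C$ with $E(u_n)\to E_c$ and $\|u_n\|_{L^{10}_{t,x}(I_n\times\R^3)}\to\infty$ (bisecting each $I_n$ so that the norm diverges in both time directions from a common point, which we translate to $t=0$), and apply the linear profile decomposition to the bounded sequence $u_n(0)$ in $\dot H^1_x$: writing $u_n(0) = \sum_{j=1}^J g_n^j e^{it_n^j\Delta}\phi^j + w_n^J$, where the $g_n^j$ encode the scaling and translation symmetries, the times $t_n^j$ converge to $0$ or $\pm\infty$, the parameters are asymptotically orthogonal, the energies decouple in the limit, and $\|e^{it\Delta}w_n^J\|_{L^{10}_{t,x}}\to 0$ along a subsequence as $J\to\infty$ followed by $n\to\infty$. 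Associate to each profile $\phi^j$ (after evolving if $t_n^j\to\pm\infty$) a nonlinear solution $v^j$; if every $v^j$ had finite spacetime norm one could reconstruct $u_n$ as an approximate superposition $u_n \approx \sum_j v_n^j + e^{it\Delta}w_n^J$ and invoke the stability lemma to contradict $\|u_n\|_{L^{10}_{t,x}}\to\infty$. Hence at least one profile, say $v^1$, has infinite spacetime norm; energy decoupling forces $E(\phi^1) \le E_c$, and if the inequality were strict $v^1$ would have finite norm by definition of $E_c$, so $E(\phi^1) = E_c$, all other profiles vanish, and $w_n^J\to 0$. Thus $u_n(0)$ converges (modulo symmetries) to data of energy exactly $E_c$, whose nonlinear evolution $u$ is a maximal-lifespan solution with $E(u) = E_c$ that blows up in $L^{10}_{t,x}$ forward and backward in time.

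It remains to upgrade convergence of the data at $t=0$ to a compactness statement for the orbit $\{u(t): t\in I\}$, which gives almost periodicity: for any sequence $t_m\in I$, apply the above argument to the translated solutions $u(\cdot + t_m)$ --- their initial data $u(t_m)$ are bounded in $\dot H^1_x$ with energy $\le E_c$ and with $L^{10}_{t,x}$ norm diverging in both directions (since $u$ blows up both ways from every point), so the same extremality dichotomy shows a single profile survives carrying the full energy, i.e. $u(t_m)$ converges in $\dot H^1_x$ modulo symmetries. Compactness of $\{u(t)\}$ modulo the scaling and translation group is precisely the tightness-plus-equicontinuity statement \eqref{E:ap}, with $N(t)$, $x(t)$ read off from the symmetry parameters; continuity of these functions (or at least the crude local bounds one needs) follows from continuity of $t\mapsto u(t)$ in $\dot H^1_x$ together with the compactness. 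The main obstacle is the nonlinear reconstruction and stability step: one must carefully handle the profiles whose time parameters escape to $\pm\infty$ (replacing $e^{it_n^j\Delta}\phi^j$ by genuine solutions with the right asymptotics), control the pairwise interactions of the $v_n^j$ using the asymptotic orthogonality of scales and centers, and verify the smallness hypotheses of the perturbation lemma uniformly in $J$ --- this is where essentially all of the real work lies, and it relies on the full strength of the Strichartz and local well-posedness theory for \eqref{nls}.
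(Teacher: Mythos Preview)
Your outline is correct and matches the standard Kenig--Merle concentration-compactness argument that the paper cites for this theorem; note that the paper does not actually give a proof of Theorem~\ref{T:reduct} but instead defers to \cite{KM:NLS}, \cite{Berbec}, and \cite{ClayNotes}, where precisely the profile-decomposition-plus-stability route you describe is carried out. The ingredients you identify (improved Strichartz/profile decomposition to detect concentration, perturbation theory to rule out multiple profiles, and re-running the argument along the orbit to obtain compactness) are exactly those the paper names.
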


The theorem does not explicitly claim that $u$ is a \emph{minimal} counterexample; nevertheless, this is how it is
constructed and, more importantly, how it is shown to be almost periodic.  In \cite{CKSTT:gwp}, the role of this
theorem is played by Corollary~4.4 (equicontinuity) and Proposition~4.6 (tightness).

A pr\'ecis of the proof of Theorem~\ref{T:reduct} can be found in \cite{KM:NLS}, building on Keraani's method
\cite{keraani-l2}; for complete details see \cite{Berbec} or \cite{ClayNotes}.  Just as for the results from
\cite{CKSTT:gwp} mentioned above, the key ingredients in the proof are improved Strichartz inequalities, which show
that concentration occurs, and perturbation theory, which shows that multiple simultaneous concentrations are
inconsistent with minimality.

Continuity of the flow prevents rapid changes in the modulation parameters $x(t)$ and $N(t)$.  In particular, from
\cite[Corollary~3.6]{KTV} or \cite[Lemma 5.18]{ClayNotes} we have

\begin{lemma}[Local constancy property]\label{L:local const}
Let $u:I\times\R^3\to \C$ be a maximal-lifespan almost periodic solution to \eqref{nls}.  Then there exists a small number $\delta$, depending only on $u$, such that if $t_0 \in I$ then
\begin{align*}
\bigl[t_0 - \delta N(t_0)^{-2}, t_0 + \delta N(t_0)^{-2}\bigr] \subset I
\end{align*}
and
\begin{align*}
N(t) \sim_u N(t_0) \quad \text{whenever} \quad |t-t_0| \leq \delta N(t_0)^{-2}.
\end{align*}
\end{lemma}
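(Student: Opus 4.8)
The plan is to argue by contradiction using the stability/perturbation theory for \eqref{nls} together with the compactness afforded by almost periodicity. Suppose the claim fails. Then there is a sequence of times $t_n \in I$ and a sequence $\delta_n \to 0$ such that, rescaling so that $N(t_n) = 1$ (which we may do by exploiting the scaling symmetry \eqref{scaling}, replacing $u$ by $u^{N(t_n)}$ and noting the new solution is almost periodic with the same modulus of compactness), either the interval $[t_n - \delta_n, t_n + \delta_n]$ is not contained in the (rescaled) lifespan, or there is a time $s_n$ with $|s_n - t_n| \le \delta_n$ for which $N(s_n)/N(t_n)$ tends to $0$ or $\infty$. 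First I would translate in time so that $t_n = 0$ and normalize $N(0) = 1$. By almost periodicity and boundedness in $\dot H^1_x$, the rescaled, re-centered initial data $u^{N(t_n)}(0)$ form a precompact sequence in $\dot H^1_x$; passing to a subsequence, $u^{N(t_n)}(0) \to v_0$ in $\dot H^1_x$ for some nonzero $v_0$ (nonzero because $N(0)=1$ forces a fixed amount of energy at unit frequency, by Remark~\ref{R:small freq}).

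Next I would invoke the local well-posedness theory: the solution $v$ to \eqref{nls} with data $v_0$ exists on some fixed interval $[-2\delta_0, 2\delta_0]$ with a finite $L^{10}_{t,x}$ bound there. By the stability lemma (standard perturbation theory, available once we are below the minimal blowup threshold — and here we are merely comparing two genuine solutions with nearby data, so ordinary short-time stability suffices), for $n$ large the rescaled solutions $u^{N(t_n)}$ also exist on $[-\delta_0, \delta_0]$ with uniformly bounded $L^{10}_{t,x}$ norm and stay close to $v$ in $C^0_t\dot H^1_x$. Undoing the scaling, this already shows $[t_n - \delta_0 N(t_n)^{-2}, t_n + \delta_0 N(t_n)^{-2}] \subset I$ for all large $n$, contradicting the first alternative. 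For the second alternative, I would use that on $[-\delta_0,\delta_0]$ the rescaled orbit $\{u^{N(t_n)}(s) : |s| \le \delta_0\}$ is, uniformly in $n$, within $\eta$ (in the relevant tightness/equicontinuity sense) of the convergent limit orbit $\{v(s) : |s|\le\delta_0\}$, which is itself compact; hence there is a single compact set $K \subset \dot H^1_x \setminus \{0\}$, modulo translations, containing all $u^{N(t_n)}(s)$ for $|s|\le\delta_0$ and $n$ large.

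The final step is to extract a contradiction from the ratio $N(s_n)/N(t_n) \to 0$ or $\infty$. The point is that almost periodicity with frequency scale function $N$ means that $u(s_n)$, rescaled by $N(t_n)$, lives essentially at frequencies $\sim N(s_n)/N(t_n)$ — i.e.\ it is either concentrated at arbitrarily low frequency or arbitrarily high frequency relative to unit scale. But this is incompatible with belonging to the fixed compact set $K$: a compact subset of $\dot H^1_x \setminus\{0\}$ (modulo spatial translation) admits uniform tightness and equicontinuity, so no element can have a definite fraction of its $\dot H^1$-norm escaping to frequency $0$ or $\infty$, while the nontriviality of $v_0$ ensures a definite fraction of the norm is present. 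This contradiction establishes the lemma. The main obstacle, and the step requiring care, is the passage from "almost periodic with frequency scale function $N$" to a quantitative statement that a large ratio $N(s_n)/N(t_n)$ forces frequency localization far from unit scale; this uses both halves of \eqref{E:ap} (the physical-space tail and the Fourier tail) at the two times $t_n$ and $s_n$, together with the fact that $x(s_n)$ and $x(t_n)$ need not agree — so one must be careful that the spatial-center shift does not interfere, which is why working "modulo translations" throughout is essential. I expect the rest to be routine given Theorem~\ref{T:reduct} and the standard local theory.
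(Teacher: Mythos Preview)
The paper does not actually prove Lemma~\ref{L:local const}; it is quoted without proof from \cite[Corollary~3.6]{KTV} and \cite[Lemma~5.18]{ClayNotes}.  Your argument---rescale to $N(t_0)=1$, use precompactness of the orbit to pass to a limit $v_0\in\dot H^1_x\setminus\{0\}$, invoke local well-posedness plus stability to obtain a uniform lifespan and $C^0_t\dot H^1_x$-closeness, then argue that a diverging frequency ratio is incompatible with membership in a fixed compact set---is precisely the argument given in those references, so your proposal is correct and matches the intended proof.

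Two small points worth tightening.  First, your justification that $v_0\neq 0$ is slightly off: $N(0)=1$ by itself carries no information (the parameters $N,x$ are not unique), and Remark~\ref{R:small freq} does not say what you claim.  The clean argument is that the $\dot H^1_x$-norm is scale-invariant, so convergence of the rescaled data to $0$ would force $\|u(t_n)\|_{\dot H^1_x}\to 0$, hence $E(u)=0$ and $u\equiv 0$; one should therefore note at the outset that the lemma is vacuous for $u\equiv 0$.  Second, for the frequency-ratio contradiction you only need the Fourier-side half of \eqref{E:ap}, and the spatial centers $x(t_n),x(s_n)$ play no role there, so the concern you flag about translation interference does not actually arise in this step.
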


We recall next a consequence of the local constancy property; see \cite[Corollary~3.7]{KTV} and \cite[Corollary~5.19]{ClayNotes}.

\begin{corollary}[$N(t)$ at blowup]\label{C:blowup criterion}
Let $u:I\times\R^3\to \C$ be a maximal-lifespan almost periodic solution to \eqref{nls}.  If $T$ is any finite endpoint of $I$, then $N(t) \gtrsim_u |T-t|^{-1/2};$
in particular, $\lim_{t\to T} N(t)=\infty$.
\end{corollary}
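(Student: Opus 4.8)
The plan is to deduce Corollary~\ref{C:blowup criterion} from the Local constancy property (Lemma~\ref{L:local const}) by a simple iteration/covering argument near the finite endpoint. Suppose $T=\sup I<\infty$ (the case $T=\inf I$ is symmetric). Fix $t$ with $t<T$ close enough to $T$. The idea is that, by Lemma~\ref{L:local const}, the solution extends past $t$ by a time increment proportional to $N(t)^{-2}$; since the solution does \emph{not} extend past $T$, this increment cannot overshoot $T$, which forces $\delta N(t)^{-2}\lesssim_u T-t$, i.e. $N(t)\gtrsim_u|T-t|^{-1/2}$.

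More carefully, I would argue as follows. Let $\delta=\delta(u)>0$ be the constant from Lemma~\ref{L:local const}. Fix $t_0\in I$ with $t_0<T$. Lemma~\ref{L:local const} gives $[t_0-\delta N(t_0)^{-2},\,t_0+\delta N(t_0)^{-2}]\subset I$. Since $T=\sup I$, the right endpoint of this interval is at most $T$, so
\begin{align*}
t_0+\delta N(t_0)^{-2}\le T,
\end{align*}
that is, $\delta N(t_0)^{-2}\le T-t_0$, which rearranges to $N(t_0)\ge \delta^{1/2}(T-t_0)^{-1/2}$. Relabelling $t_0$ as $t$, this is exactly the claimed bound $N(t)\gtrsim_u|T-t|^{-1/2}$, with implicit constant $\delta^{1/2}$ depending only on $u$. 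Letting $t\to T$ then forces $N(t)\to\infty$.

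I do not anticipate any real obstacle here: the content is entirely contained in Lemma~\ref{L:local const}, and the corollary is just the observation that a maximal-lifespan solution cannot be extended past a finite endpoint, so the interval of local constancy guaranteed by the lemma must fit inside $I$. The only point requiring a word of care is to make sure one uses the correct endpoint — comparing with $\sup I$ when $T=\sup I$ and with $\inf I$ when $T=\inf I$ — and to note that the argument uses only the first conclusion of Lemma~\ref{L:local const} (the inclusion of the time interval), not the comparability $N(t)\sim_u N(t_0)$. One could alternatively phrase the proof by contradiction — if $N(t_k)(T-t_k)^{1/2}\to 0$ along some sequence $t_k\to T$, then the intervals $[t_k,t_k+\delta N(t_k)^{-2}]$ eventually contain $T$ and extend beyond it, contradicting $T=\sup I$ — but the direct version above is cleaner.
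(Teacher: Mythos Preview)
Your argument is correct and is exactly the intended one: the paper does not give its own proof but cites the result as ``a consequence of the local constancy property'' (Lemma~\ref{L:local const}), and your derivation from the inclusion $[t_0-\delta N(t_0)^{-2},\,t_0+\delta N(t_0)^{-2}]\subset I$ is the standard proof found in the cited references.
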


Finally, we will need the following result linking the frequency scale function $N(t)$ of an almost periodic solution $u$ and its Strichartz norms:

\begin{lemma}[Spacetime bounds]\label{L:ST-N(t)}
Let $u$ be an almost periodic solution to \eqref{nls} on a time interval $I$.  Then
\begin{align}\label{ST via N(t)}
 \int_I N(t)^2\,dt \lesssim_u \|\nabla u\|_{L_t^q L_x^r(I\times \R^3)}^q \lesssim_u 1+ \int_I N(t)^2\,dt
\end{align}
for all $\frac2q+\frac3r=\frac32$ with $2\leq q<\infty$.
\end{lemma}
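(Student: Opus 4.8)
The plan is to establish both inequalities in \eqref{ST via N(t)} by a covering argument based on the local constancy property (Lemma~\ref{L:local const}): partition $I$ into subintervals on which $N(t)$ is essentially constant, estimate the $L^q_t L^r_x$ norm of $\nabla u$ on each piece using local well-posedness / Strichartz theory, and then sum. First I would reduce to one exponent pair: since $u$ is almost periodic, $\|\nabla u\|_{L^\infty_t L^2_x(I\times\R^3)}\lesssim_u 1$, so the $L^q_tL^r_x$ norms for admissible $(q,r)$ with $q<\infty$ are all comparable up to interpolation with this endpoint; it suffices to prove \eqref{ST via N(t)} for, say, a convenient pair like $(q,r)=(\tfrac{10}{3},\tfrac{10}{3})$, and then the general case follows by interpolating between that and $(\infty,2)$ — though one must be slightly careful that interpolation gives the two-sided bound only after absorbing constants, which is where the $1+$ on the right and the bare $\int N(t)^2$ on the left come from.

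For the covering argument, let $\delta=\delta(u)$ be as in Lemma~\ref{L:local const}. If $I$ is compact I would greedily decompose $I=\bigsqcup_k J_k$ into consecutive intervals with $|J_k|\sim \delta N(t_k)^{-2}$ for some $t_k\in J_k$ (so $N(t)\sim_u N(t_k)$ throughout $J_k$), except possibly for one or two leftover intervals at the ends of $I$ which are shorter — these contribute the harmless $O_u(1)$ term. On each $J_k$, almost periodicity localizes $u(t)$ in frequency to $|\xi|\sim N(t_k)$ (up to the tails controlled by \eqref{E:ap}), and the time length is exactly the natural Strichartz time scale $N(t_k)^{-2}$ for that frequency; a standard local Strichartz estimate (or the stability theory underlying Theorem~\ref{T:reduct}) then gives $\|\nabla u\|_{L^q_tL^r_x(J_k\times\R^3)}^q\sim_u 1$, i.e.\ bounded above and below by $u$-dependent constants. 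Summing over $k$ yields $\|\nabla u\|^q_{L^q_tL^r_x(I)}\sim_u \#\{k\} + O_u(1)$. Finally $\#\{k\}\sim_u \sum_k |J_k| N(t_k)^2 \sim_u \sum_k \int_{J_k} N(t)^2\,dt = \int_I N(t)^2\,dt$, which is precisely the claimed two-sided comparison; the case of non-compact $I$ follows by exhausting $I$ by compact subintervals and monotone convergence.

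The main obstacle — and the only genuinely non-routine point — is the \emph{lower} bound $\int_I N(t)^2\,dt \lesssim_u \|\nabla u\|^q_{L^q_tL^r_x}$, equivalently the claim that $\|\nabla u\|_{L^q_tL^r_x(J_k)}$ is bounded \emph{below} on each unit-length (in rescaled time) block. This requires knowing that an almost periodic solution cannot have vanishingly small Strichartz norm on an interval of length $\sim N^{-2}$: if it did, the small-data / stability theory would force $u$ to be close to a linear solution there and, combined with continuity of $N(t)$ and the blowup hypothesis, one can rule this out — concretely, one invokes that $\|\nabla u\|_{L^{10}_{t,x}}$ on such a block is bounded below because otherwise $u$ extends as a global small solution, contradicting that $u$ blows up forward and backward. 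This is exactly the content of \cite[Corollary~3.6]{KTV}; I would cite it rather than reprove it. The upper bound is comparatively soft: it is just local Strichartz on each block plus summation, using that $u\in L^{10}_{t,x}$ on compact subintervals together with the frequency localization from \eqref{E:ap} to trade the $L^{10}_{t,x}$ control for control of $\|\nabla u\|_{L^q_tL^r_x}$ at the scale $N(t_k)$.
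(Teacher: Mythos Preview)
Your covering argument is essentially correct, but the paper takes a shorter route, particularly for the lower bound. The paper first cites the $L^{10}_{t,x}$ version of \eqref{ST via N(t)} from \cite[Lemma~5.21]{ClayNotes} (whose proof is indeed the block decomposition you describe), and then obtains the upper bound for general $(q,r)$ by a single global application of the Strichartz inequality rather than by re-running the local theory on each $J_k$. For the lower bound the paper bypasses the block decomposition entirely: the quantity $N(t)^{-2/q}\|\nabla u(t)\|_{L^r_x}$ is invariant under the scaling and translation symmetries, continuous on the orbit (using the uniform frequency localization coming from almost periodicity), and strictly positive since $u\not\equiv 0$; precompactness of the orbit modulo symmetries then gives a uniform positive lower bound, and raising to the $q$th power and integrating in $t$ yields $\int_I N(t)^2\,dt\lesssim_u\|\nabla u\|_{L^q_tL^r_x}^q$ directly.

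This pointwise compactness argument is both shorter and strictly more general than yours: your lower bound relies on the solution blowing up (so that a small Strichartz norm on a block would contradict maximality via small-data theory), but the lemma as stated assumes only almost periodicity, not blowup. In the paper's applications this distinction is harmless since the almost periodic solutions under consideration do blow up, but as written your argument has this small gap. You could close it without invoking blowup by observing that, after rescaling each $J_k$ to unit scale, the resulting solutions lie in a fixed compact family and hence have uniformly positive Strichartz norm on a unit time interval; but at that point you have essentially reproduced the paper's compactness argument in integrated form. Also note that your proposed reduction to a single exponent pair via interpolation with $(\infty,2)$ works cleanly only for the upper bound; the lower bound does not transfer by interpolation, so it is better to argue it directly for each $(q,r)$ as the paper does.
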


\begin{proof}
We recall that Lemma~5.21 in \cite{ClayNotes} shows that
\begin{equation}\label{E:Strich as N(t)}
 \int_I N(t)^2\,dt \lesssim_u \int_I\int_{\R^3} |u(t,x)|^{10}\, dx\, dt  \lesssim_u 1+ \int_I N(t)^2\,dt.
\end{equation}
The second inequality in \eqref{ST via N(t)} follows from the second inequality above and an application of the
Strichartz inequality.  The first inequality follows by the same method used to prove the corresponding result in
\eqref{E:Strich as N(t)}:  The fact that $u\not\equiv0$ ensures that $N(t)^{-2/q} \|\nabla u(t)\|_{L_x^r}$ never
vanishes. Almost periodicity then implies that it is bounded away from zero and the inequality follows.
\end{proof}

Let $u:I\times\R^3\to\C$ be an almost periodic maximal-lifespan solution to \eqref{nls}.  As a direct consequence of
the preceding three results, we can tile the interval $I$ with infinitely many \emph{characteristic intervals} $J_k$,
which have the following properties:
\begin{CI}
\item $N(t)\equiv N_k$ is constant on each $J_k$.
\item $|J_k| \sim_u N_k^{-2}$, uniformly in $k$.
\item $\|\nabla u\|_{L_t^q L_x^r(J_k\times \R^3)} \sim_u 1$, for each $\frac2q+\frac3r=\frac32$ with $2\leq q\leq\infty$ and uniformly in $k$.
\end{CI}
Note that the redefinition of $N(t)$ may necessitate a mild increase in the modulus of compactness.  We may further assume that $0$ marks a boundary between characteristic intervals,
which we do, for expository reasons.

Returning to Theorem~\ref{T:reduct}, a simple rescaling argument (see, for example, the proof of Theorem~3.3 in
\cite{TVZ:sloth}) allows us to additionally assume that $N(t)\geq 1$ at least on half of the interval $I$, say, on $[0, \Tmax)$.
Inspired by \cite{Dodson:d>2}, we further subdivide into two cases dictated by the control given by the interaction Morawetz inequality.
Putting everything together, we obtain

\begin{theorem}[Two special scenarios for blowup]\label{T:enemies}
{\hskip 0em plus 1em minus 0em} Suppose Theorem \ref{T:main} failed.  Then there exists an almost periodic solution $u: [0, \Tmax)\times \R^3 \to \C$, such that
$$
\|u\|_{L^{10}_{t,x}( [0, \Tmax) \times \R^4)} =+\infty
$$
and $[0, \Tmax)=\cup_k J_k$ where $J_k$ are characteristic intervals on which $N(t)\equiv N_k \geq 1$.  Furthermore,
$$
\text{either} \quad \int_0^{T_{\max}} N(t)^{-1}\, dt<\infty \quad\text{or}\quad \int_0^{T_{\max}}  N(t)^{-1}\, dt=\infty.
$$
\end{theorem}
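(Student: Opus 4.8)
This statement consolidates the results established earlier in this section and appends a trivial dichotomy, so the plan is to assemble the existing pieces rather than to prove anything new. The starting point is Theorem~\ref{T:reduct}: assuming Theorem~\ref{T:main} fails, fix a maximal-lifespan almost periodic solution $v: I\times\R^3\to\C$ to \eqref{nls} that blows up both forward and backward in time. In particular, for every $t_0\in I$ both $\int_{t_0}^{\sup I}\int_{\R^3} |v|^{10}\,dx\,dt$ and $\int_{\inf I}^{t_0}\int_{\R^3}|v|^{10}\,dx\,dt$ are infinite, so the restriction of $v$ to either half of $I$ still has infinite $L^{10}_{t,x}$ norm.

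Next I would carry out the characteristic-interval construction described in the paragraph immediately preceding the statement. Combining the local constancy property (Lemma~\ref{L:local const}), the lower bound on $N(t)$ at a finite endpoint (Corollary~\ref{C:blowup criterion}), and the comparison of Strichartz norms with $\int N(t)^2\,dt$ (Lemma~\ref{L:ST-N(t)}), one tiles $I$ by characteristic intervals $J_k$, redefines $N$ to equal the constant $N_k$ on $J_k$ at the cost of a mild and harmless increase in the modulus of compactness, arranges $|J_k|\sim_u N_k^{-2}$ together with $\|\nabla v\|_{L^q_tL^r_x(J_k\times\R^3)}\sim_u 1$ for all admissible $(q,r)$, and places a boundary point of the tiling at $0$. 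A simple rescaling argument --- following the proof of Theorem~3.3 in \cite{TVZ:sloth}, i.e.\ invoking the scaling symmetry \eqref{scaling} and, if needed, the time reflection $t\mapsto -t$ to normalise the now piecewise-constant frequency scale on one half of $I$ --- then yields an almost periodic solution $u:[0,\Tmax)\times\R^3\to\C$, with $\Tmax$ the supremum of its lifespan, such that $[0,\Tmax)=\cup_k J_k$ with $N(t)\equiv N_k\geq 1$ on every $J_k$, and, by the previous paragraph, $\|u\|_{L^{10}_{t,x}([0,\Tmax)\times\R^3)}=\infty$.

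Finally, the asserted alternative --- either $\int_0^{\Tmax}N(t)^{-1}\,dt<\infty$ or $\int_0^{\Tmax}N(t)^{-1}\,dt=\infty$ --- is a tautology; recording it here simply fixes the two regimes that the rest of the paper treats by genuinely different means (the interaction Morawetz inequality controlling the case of rapidly growing frequency scale, and conservation of mass the complementary case, in the spirit of \cite{Dodson:d>2}). Consequently there is no real obstacle in this proof. The only points meriting any care are that replacing $N$ by the step function $N_k$ and applying the scaling symmetry each preserve almost periodicity --- with, at worst, a coarser modulus of compactness --- and that the three input results (Lemma~\ref{L:local const}, Corollary~\ref{C:blowup criterion}, Lemma~\ref{L:ST-N(t)}) do apply to the solution in hand; both are routine and are in any case already subsumed in the discussion preceding the statement.
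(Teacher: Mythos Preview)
Your proposal is correct and follows exactly the paper's own argument: the theorem is assembled from Theorem~\ref{T:reduct}, the characteristic-interval construction via Lemma~\ref{L:local const}, Corollary~\ref{C:blowup criterion}, and Lemma~\ref{L:ST-N(t)}, the rescaling normalisation from \cite{TVZ:sloth}, and finally the tautological dichotomy. One small slip in your closing parenthetical, unrelated to the proof itself: the roles are reversed --- conservation of mass handles the rapid-cascade case $\int N(t)^{-1}\,dt<\infty$ (Section~\ref{S:cascade}), while the interaction Morawetz identity handles the quasi-soliton case $\int N(t)^{-1}\,dt=\infty$ (Sections~\ref{S:IM}--\ref{S:no soliton}).
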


Thus, in order to prove Theorem~\ref{T:main} we just need to preclude the existence of the two types of almost periodic
solution described in Theorem~\ref{T:enemies}.  By analogy with the trichotomies appearing in \cite{KTV,Berbec}, we refer to
the first type of solution as a \emph{rapid low-to-high frequency cascade} and the second as a \emph{quasi-soliton}.

In each case, the key to showing that such solutions do not exist is a fundamentally nonlinear relation obeyed by the
equation.  In the cascade case, it is the conservation of mass; in the quasi-soliton case, it is the interaction
Morawetz identity (a monotonicity formula introduced in \cite{CKSTT:interact}).  Unfortunately, both of these relations
have energy-subcritical scaling and so are not immediately applicable to $L^\infty_t \dot H^1_x$ solutions; additional
control on the low frequencies is required. It is in how this control is achieved that we deviate most from
\cite{CKSTT:gwp}.

The argument in \cite{CKSTT:gwp} relies heavily on the interaction Morawetz identity.  To cope with the non-critical
scaling, a frequency localization is introduced.  This produces error terms which are then controlled by means of a
highly entangled bootstrap argument.  Dodson's paper \cite{Dodson:d>2} also uses a frequency-localized interaction
Morawetz identity; however, the error terms are handled via spacetime estimates that are proved independently of this
identity.  Indeed, the proof of these estimates does not even rely on the defocusing nature of the nonlinearity.

In this paper, we adopt Dodson's strategy (see also \cite{Visan:4D}).  The requisite estimates on the low-frequency
part  of the solution appear in Theorem~\ref{T:LTS}.  It seems to us that this theorem represents the limit of what can
be achieved without the use of intrinsically nonlinear tools such as monotonicity formulae.  The rationale for this
assertion comes from consideration of the focusing equation and is discussed in Remark~\ref{R:no better}.
Nevertheless, Theorem~\ref{T:LTS} does just suffice to treat the error terms in the frequency-localized interaction
Morawetz identity (see Section~\ref{S:IM}), which is then used to preclude quasi-solitons in Section~\ref{S:no
soliton}.

The proof of Theorem~\ref{T:LTS} relies on a type of Strichartz estimate that we have not seen previously.  This estimate, Proposition~\ref{P:MaxS}, has the flavour of a maximal
function in that it controls the worst Littlewood-Paley piece at each moment of time.  The necessity of considering a supremum over frequency projections (as opposed to a sum)
is borne out by an examination of the ground-state solution to the focusing equation; see Remark~\ref{R:no better}.  The proof of this proposition is adapted from the double
Duhamel trick first introduced in \cite[\S14]{CKSTT:gwp}.  The original application of this trick also appears here, namely, as Proposition~\ref{P:DDS}.

The non-existence of cascade solutions is proved in Section~\ref{S:cascade}.  The argument combines the following proposition and
Theorem~\ref{T:LTS} to prove first that the mass is finite and then (to reach a contradiction) that it is zero.  It is equally valid in the focusing case.

\begin{proposition}[No-waste Duhamel formula, \cite{ClayNotes,TVZ:cc}]\label{P:duhamel}
Let $u: [0, \Tmax)\times \R^3 \to \C$ be a solution as in Theorem~\ref{T:enemies}.  Then for all $t\in[0, \Tmax)$,
\begin{align*}
u(t) = i \lim_{T\to\, \Tmax} \int_t^T e^{i(t-s) \Delta}  |u(s)|^4u(s)  \, ds,
\end{align*}
where the limit is to be understood in the weak $\dot H^1_x$ topology.
\end{proposition}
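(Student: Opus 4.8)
The plan is to start from the Duhamel formula with a movable terminal time and show that the free-evolution part converges weakly to zero; the asserted formula then follows by subtraction. Fix $t\in[0,\Tmax)$. For every $T\in(t,\Tmax)$ the Duhamel formula \eqref{old duhamel}, written with reference time $T$ instead of $0$, gives
\begin{align*}
u(t) = e^{i(t-T)\Delta}u(T) + i\int_t^T e^{i(t-s)\Delta}|u(s)|^4 u(s)\,ds .
\end{align*}
Thus it suffices to prove that $e^{i(t-T)\Delta}u(T)\rightharpoonup 0$ in $\dot H^1_x$ as $T\to\Tmax$; the displayed identity then forces $i\int_t^T e^{i(t-s)\Delta}|u(s)|^4u(s)\,ds=u(t)-e^{i(t-T)\Delta}u(T)\rightharpoonup u(t)$, which is the claim.

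To prove the weak convergence I would exploit the scaling structure of the free propagator together with almost periodicity. By Definition~\ref{D:ap} and the subsequent remarks, the rescaled orbit $\{v(\tau):\tau\in[0,\Tmax)\}$ is precompact in $\dot H^1_x$, where $v(\tau,x):=N(\tau)^{-1/2}u\bigl(\tau,N(\tau)^{-1}x+x(\tau)\bigr)$. Since the Sobolev embedding $\dot H^1_x(\R^3)\hookrightarrow L^6_x$ is invariant under translations and under the (energy-critical) dilations, and since $e^{is\Delta}$ intertwines with dilation by a factor $a$ by turning $s$ into $sa^2$, we obtain the exact identity
\begin{align*}
\bigl\|e^{i(t-T)\Delta}u(T)\bigr\|_{L^6_x} = \bigl\|e^{i(t-T)N(T)^2\Delta}v(T)\bigr\|_{L^6_x}.
\end{align*}
Now $\rho(s):=\sup_{\tau\in[0,\Tmax)}\|e^{is\Delta}v(\tau)\|_{L^6_x}\to 0$ as $|s|\to\infty$: for a single function this follows by approximating in $\dot H^1_x$ by Schwartz functions and combining the dispersive bound $\|e^{is\Delta}f\|_{L^6_x}\lesssim|s|^{-1}\|f\|_{L^{6/5}_x}$ with the scale-invariant Strichartz bound $\|e^{is\Delta}f\|_{L^6_x}\lesssim\|f\|_{\dot H^1_x}$, and the supremum over $\tau$ is then handled by a routine compactness argument. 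Finally, the rescaled elapsed time $(T-t)N(T)^2$ tends to $\infty$ in all cases: if $\Tmax=\infty$ this is because $N(T)\geq 1$ (Theorem~\ref{T:enemies}), and if $\Tmax<\infty$ it is because $N(T)\gtrsim_u|\Tmax-T|^{-1/2}$ by Corollary~\ref{C:blowup criterion}. Hence $\|e^{i(t-T)\Delta}u(T)\|_{L^6_x}\leq\rho\bigl((t-T)N(T)^2\bigr)\to 0$.

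To finish, I would upgrade $L^6_x$-decay to weak $\dot H^1_x$-convergence. The family $\{e^{i(t-T)\Delta}u(T):t<T<\Tmax\}$ is bounded in $\dot H^1_x$ (the propagator is unitary there and $\|u(T)\|_{\dot H^1_x}\lesssim_u 1$), so it suffices to test against the dense subspace $L^{6/5}_x\subset\dot H^{-1}_x$; by H\"older together with $\dot H^1_x\hookrightarrow L^6_x$, the corresponding pairings are $O\bigl(\|e^{i(t-T)\Delta}u(T)\|_{L^6_x}\bigr)\to 0$. Therefore $e^{i(t-T)\Delta}u(T)\rightharpoonup 0$ in $\dot H^1_x$ as $T\to\Tmax$, which completes the proof. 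The step I expect to be the crux is the second one: recognizing that, once one rescales by $N(t)$ as dictated by almost periodicity, the elapsed time is multiplied by $N(T)^2$ and therefore always diverges --- simultaneously covering finite $\Tmax$ and infinite $\Tmax$ (equivalently, the cascade and quasi-soliton scenarios of Theorem~\ref{T:enemies}) --- and that the scale-invariant $L^6_x$ norm is precisely the quantity on which dispersion decays, uniformly over the precompact orbit. The remaining ingredients (the dispersive estimate, density of $L^{6/5}_x$ in $\dot H^{-1}_x$, and the passage from strong $L^6_x$-decay to weak $\dot H^1_x$-convergence) are routine.
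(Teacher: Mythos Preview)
The paper does not prove this proposition; it is quoted from \cite{ClayNotes,TVZ:cc}. Your argument is correct and follows the same route as the proofs in those references: write the Duhamel formula from the movable endpoint $T$, then show $e^{i(t-T)\Delta}u(T)\rightharpoonup 0$ in $\dot H^1_x$ by rescaling via almost periodicity, observing that the rescaled elapsed time $(T-t)N(T)^2\to\infty$ (using $N(T)\geq 1$ when $\Tmax=\infty$ and Corollary~\ref{C:blowup criterion} when $\Tmax<\infty$), invoking dispersion plus compactness to get $L^6_x$ decay, and then upgrading to weak $\dot H^1_x$ convergence by duality and density.
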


\subsection*{Acknowledgements}
The first author was partially supported by NSF grant DMS-1001531.  The second author was partially supported by the Sloan Foundation and NSF grant DMS-0901166.
This work was completed while the second author was a Harrington Faculty Fellow at the University of Texas at Austin.

%
%
%
%

\section{Notation and useful lemmas}\label{S:notation}
We use the notation $X \lesssim Y$ to indicate that there exists some constant $C$ so that $X \leq CY$.
Similarly, we write $X \sim Y$ if $X \lesssim Y \lesssim X$.  We use subscripts to indicate the dependence of $C$ on additional parameters.
For example, $X \lesssim_u Y$ denotes the assertion that $X \leq C_u Y$ for some $C_u$ depending on $u$.

We will make frequent use of the fractional differential/integral operators $|\nabla|^s$ together with the corresponding homogeneous Sobolev norms:
$$
\|f\|_{\dot H^s_x} := \| |\nabla|^s f \|_{L^2_x } \quad\text{where}\quad \widehat{|\nabla|^sf}(\xi) :=|\xi|^s \hat f (\xi).
$$

We will also need some Littlewood--Paley theory.  Specifically, let $\varphi(\xi)$ be a smooth bump supported in the ball $|\xi| \leq 2$ and equalling one on the ball $|\xi| \leq 1$.
For each dyadic number $N \in 2^\Z$ we define the Littlewood--Paley operators
\begin{align*}
\widehat{P_{\leq N}f}(\xi) :=  \varphi(\xi/N)\hat f (\xi), \qquad \widehat{P_{> N}f}(\xi) :=  (1-\varphi(\xi/N))\hat f (\xi),
\end{align*}
\begin{align*}
\widehat{P_N f}(\xi) :=  (\varphi(\xi/N) - \varphi (2 \xi /N))
\hat f (\xi).
\end{align*}
Similarly, we can define $P_{<N}$, $P_{\geq N}$, and $P_{M < \cdot \leq N} := P_{\leq N} - P_{\leq M}$, whenever $M$ and $N$ are dyadic numbers.  We will frequently write $f_{\leq N}$ for
$P_{\leq N} f$ and similarly for the other operators.

The Littlewood--Paley operators commute with derivative operators, the free propagator, and complex conjugation.  They are self-adjoint and bounded on every $L^p_x$ and $\dot H^s_x$
space for $1 \leq p \leq\infty$ and $s\geq 0$.  They also obey the following Sobolev and Bernstein estimates:
\begin{align*}
\| |\nabla|^{\pm s} P_N f\|_{L^p_x} \sim_s N^{\pm s} \| P_N f \|_{L^p_x}, \qquad \|P_N f\|_{L^q_x} \lesssim_s N^{\frac{3}{p}-\frac{3}{q}} \| P_N f\|_{L^p_x},
\end{align*}
whenever $s \geq 0$ and $1 \leq p \leq q \leq \infty$.

We will frequently denote the nonlinearity in \eqref{nls} by $F(u)$, that is, $F(u):=|u|^4u$.  We will use the notation $\Oh(X)$ to denote a quantity that resembles $X$,
that is, a finite linear combination of terms that look like those in $X$, but possibly with some factors replaced by their complex conjugates and/or restricted to various frequencies.
For example,
\begin{equation*}
F(u+v) = \sum_{j=0}^5 \Oh(u^j v^{5-j}) \quad \text{and}\quad F(u)=F(u_{>N}) + \Oh(u_{\leq N} u^4)  \text{ for any } N>0.
\end{equation*}

We use $L^q_tL^r_x$ to denote the spacetime norm
$$
\|u\|_{L_t^qL_x^r} :=\Bigl(\int_{\R}\Bigl(\int_{\R^3} |u(t,x)|^r dx \Bigr)^{q/r} dt \Bigr)^{1/q},
$$
with the usual modifications when $q$ or $r$ is infinity, or when the domain $\R \times \R^3$ is replaced by some smaller spacetime region.  When $q=r$ we abbreviate $L^q_tL^r_x$ by $L^q_{t,x}$.

Let $e^{it\Delta}$ be the free Schr\"odinger propagator.  In physical space this is given by the formula
$$
e^{it\Delta}f(x) = \frac{1}{(4 \pi i t)^{3/2}} \int_{\R^3} e^{i|x-y|^2/4t} f(y) dy.
$$
In particular, the propagator obeys the \emph{dispersive inequality}
\begin{equation}\label{dispersive ineq}
\|e^{it\Delta}f\|_{L^\infty_x(\R^3)} \lesssim |t|^{-3/2}\|f\|_{L^1_x(\R^3)}
\end{equation}
for all times $t\neq 0$.  As a consequence of this dispersive estimate, one obtains the Strichartz estimates; see, for example, \cite{gv:strichartz, tao:keel, Strichartz}.  The particular version we need is from \cite{CKSTT:gwp}.

\begin{lemma}[Strichartz inequality]\label{L:Strichartz}
Let $I$ be a compact time interval and let $u : I\times\R^3 \rightarrow \C$ be a solution to the forced Schr\"odinger equation
\begin{equation*}
i u_t + \Delta u = G
\end{equation*}
for some function $G$.  Then we have
\begin{equation}\label{E:BesStrich}
\Bigl\{\sum_{N\in 2^\Z}\|\nabla u_N\|^2_{L_t^qL_x^r(I\times\R^3)}\Bigr\}^{1/2} \lesssim \|u(t_0)\|_{\dot H^1_x (\R^3)} + \|\nabla G \|_{L^{\tilde q'}_t L^{\tilde r'}_x (I\times\R^3)}
\end{equation}
for any time $t_0 \in I$ and any exponents $(q,r)$ and $(\tilde q,\tilde r)$ obeying $\tfrac2q+\tfrac3r=\tfrac2{\tilde q}+\tfrac3{\tilde r} =\tfrac32$ and $2\leq q, \tilde q\leq \infty$.
Here, as usual, $p'$ denotes the dual exponent to $p$, that is, $1/p + 1/p' = 1$.
\end{lemma}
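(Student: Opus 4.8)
The plan is to reduce \eqref{E:BesStrich} to the standard (scalar) Strichartz estimates for the free Schr\"odinger propagator and then to promote the result to the square-summed Besov form using Littlewood--Paley theory; since the sharp scalar estimates are classical, the only real work is assembling the pieces. (Indeed, \eqref{E:BesStrich} is exactly the statement recorded in \cite{CKSTT:gwp}, which one could simply cite.)

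First I would invoke Duhamel's principle: because $I$ is compact and $u$ solves $iu_t+\Delta u=G$, we have $u(t)=e^{i(t-t_0)\Delta}u(t_0)-i\int_{t_0}^t e^{i(t-s)\Delta}G(s)\,ds$ for all $t,t_0\in I$. Since $\nabla$, $P_N$, and $e^{it\Delta}$ all commute, applying $\nabla P_N$ and then the homogeneous and retarded inhomogeneous scalar Strichartz estimates
\begin{align*}
\bigl\|e^{it\Delta}f\bigr\|_{L^q_tL^r_x(I\times\R^3)}\lesssim\|f\|_{L^2_x},\qquad \Bigl\|\int_{t_0}^t e^{i(t-s)\Delta}H(s)\,ds\Bigr\|_{L^q_tL^r_x(I\times\R^3)}\lesssim\|H\|_{L^{\tilde q'}_tL^{\tilde r'}_x(I\times\R^3)},
\end{align*}
valid for every admissible pair in the indicated range, gives for each dyadic $N$
\begin{align*}
\|\nabla u_N\|_{L^q_tL^r_x(I\times\R^3)}\lesssim\|\nabla P_N u(t_0)\|_{L^2_x}+\|\nabla P_N G\|_{L^{\tilde q'}_tL^{\tilde r'}_x(I\times\R^3)}.
\end{align*}
Squaring, summing over $N\in2^\Z$, and taking square roots (using the $\ell^2_N$ triangle inequality), it remains to verify two facts. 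The first, $\{\sum_N\|\nabla P_N u(t_0)\|_{L^2_x}^2\}^{1/2}\sim\|u(t_0)\|_{\dot H^1_x}$, is immediate from Plancherel and the bounded overlap of the frequency supports of the $P_N$. The second, $\{\sum_N\|\nabla P_N G\|^2_{L^{\tilde q'}_tL^{\tilde r'}_x}\}^{1/2}\lesssim\|\nabla G\|_{L^{\tilde q'}_tL^{\tilde r'}_x}$, exploits that the constraint $\tfrac2{\tilde q}+\tfrac3{\tilde r}=\tfrac32$ with $\tilde q\geq2$ forces $\tilde q'\in[1,2]$ and $\tilde r'\in[\tfrac65,2]$: Minkowski's inequality moves the $\ell^2_N$ norm inside the $L^{\tilde q'}_tL^{\tilde r'}_x$ norm, and the Littlewood--Paley square function estimate $\|(\sum_N|f_N|^2)^{1/2}\|_{L^p_x}\sim_p\|f\|_{L^p_x}$ for $1<p<\infty$ finishes it.

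The scalar estimates invoked above are themselves consequences of the dispersive inequality \eqref{dispersive ineq}: interpolating it against the isometry $\|e^{it\Delta}f\|_{L^2_x}=\|f\|_{L^2_x}$ and running the $TT^*$ argument together with the Hardy--Littlewood--Sobolev inequality produces the homogeneous estimate and the non-retarded inhomogeneous estimate, after which the Christ--Kiselev lemma converts $\int_I$ into the retarded integral $\int_{t_0}^t$ whenever $q>\tilde q'$. The main obstacle is the single \emph{double-endpoint} case $q=\tilde q=2$, i.e.\ both $(q,r)$ and $(\tilde q,\tilde r)$ equal to the three-dimensional admissible pair $(2,6)$, where Christ--Kiselev no longer applies and one must instead invoke the endpoint inhomogeneous Strichartz estimate of Keel and Tao. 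Granting that (and the elementary cases), the pieces assemble as above; see \cite{gv:strichartz,tao:keel,Strichartz} for the scalar theory and \cite{CKSTT:gwp} for the Besov-refined form \eqref{E:BesStrich}.
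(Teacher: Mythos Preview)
The paper does not actually prove this lemma; it is stated as a known result with citations to \cite{gv:strichartz, tao:keel, Strichartz} for the scalar Strichartz theory and to \cite{CKSTT:gwp} for this particular Besov-refined form. Your argument is correct and is precisely the standard derivation: apply the scalar Strichartz estimate to each $\nabla P_N u$, square-sum in $N$, and use that $\tilde q',\tilde r'\le 2$ so that Minkowski together with the Littlewood--Paley square function bound in $L^{\tilde r'}_x$ (valid since $\tilde r'\ge 6/5>1$) controls the forcing term. Your remarks on how the scalar estimates themselves are proved, including the need for the Keel--Tao endpoint argument at $(q,r)=(\tilde q,\tilde r)=(2,6)$, are also accurate.
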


Elementary Littlewood--Paley theory shows that \eqref{E:BesStrich} implies
\begin{equation*}
 \|\nabla u\|_{L_t^qL_x^r(I\times\R^3)} \lesssim \|u(t_0)\|_{\dot H^1_x (\R^3)} + \|\nabla G \|_{L^{\tilde q'}_t L^{\tilde r'}_x (I\times\R^3)},
\end{equation*}
which corresponds to the usual Strichartz inequality; however, the Besov variant given above allows us to `Sobolev embed' into $L^\infty_x$:

\begin{lemma}[An endpoint estimate]\label{L:4infty}
For any $u:I\times\R^3\to\R$ we have
\begin{align*}
\|u\|_{L_t^4L_x^\infty(I\times\R^3)} \lesssim \|\nabla u\|_{L_t^\infty L_x^2}^{1/2} \Bigl\{\sum_{N\in 2^\Z}\|\nabla u_N\|^2_{L_t^2L_x^6(I\times\R^3)}\Bigr\}^{1/4}.
\end{align*}
In particular, for any frequency $N>0$,
\begin{align*}
\|u_{\leq N}\|_{L_t^4L_x^\infty(I\times\R^3)} \lesssim \|\nabla u_{\leq N}\|_{L_t^\infty L_x^2}^{1/2} \Bigl\{\sum_{M\leq N}\|\nabla u_M\|^2_{L_t^2L_x^6(I\times\R^3)}\Bigr\}^{1/4}.
\end{align*}
\end{lemma}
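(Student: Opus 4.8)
The plan is to deduce the estimate from a pointwise‑in‑time inequality of Besov--Sobolev type. Concretely, I would first establish that for any $f:\R^3\to\C$,
\[
\|f\|_{L^\infty_x} \lesssim \Bigl(\sum_N \|\nabla P_N f\|_{L^2_x}^2\Bigr)^{1/4}\Bigl(\sum_N \|\nabla P_N f\|_{L^6_x}^2\Bigr)^{1/4},
\]
the sums running over dyadic $N\in 2^\Z$. Granting this, the lemma follows readily: apply the inequality with $f=u(t)$, raise it to the fourth power, and integrate over $t\in I$. By Plancherel, $\sum_N\|\nabla P_N u(t)\|_{L^2_x}^2\sim\|\nabla u(t)\|_{L^2_x}^2\leq\|\nabla u\|_{L^\infty_t L^2_x}^2$, so this factor comes out of the time integral; Fubini then rewrites $\int_I\sum_N\|\nabla P_N u(t)\|_{L^6_x}^2\,dt$ as $\sum_N\|\nabla u_N\|_{L^2_tL^6_x(I\times\R^3)}^2$, and taking fourth roots yields the first displayed bound of the lemma. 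The second, frequency‑localized bound then follows by running the same argument with $u$ replaced by $u_{\leq N}$, since $P_M u_{\leq N}$ vanishes unless $M\lesssim N$.

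To prove the pointwise inequality I would invoke Bernstein's inequality in two guises,
\[
\|P_N f\|_{L^\infty_x}\lesssim N^{1/2}\|\nabla P_N f\|_{L^2_x}
\qquad\text{and}\qquad
\|P_N f\|_{L^\infty_x}\lesssim N^{-1/2}\|\nabla P_N f\|_{L^6_x},
\]
write $\|f\|_{L^\infty_x}\leq\sum_N\|P_N f\|_{L^\infty_x}$, and split the sum at a dyadic frequency $N_\ast$ still to be chosen: use the first bound when $N\leq N_\ast$ and the second when $N>N_\ast$. Cauchy--Schwarz in $N$, together with the geometric‑series facts $\sum_{N\leq N_\ast}N\lesssim N_\ast$ and $\sum_{N>N_\ast}N^{-1}\lesssim N_\ast^{-1}$, gives
\[
\|f\|_{L^\infty_x}\lesssim N_\ast^{1/2}\Bigl(\sum_N\|\nabla P_N f\|_{L^2_x}^2\Bigr)^{1/2}+N_\ast^{-1/2}\Bigl(\sum_N\|\nabla P_N f\|_{L^6_x}^2\Bigr)^{1/2}.
\]
Choosing $N_\ast$ to be a dyadic number comparable to the ratio of the two square‑sums balances the two terms and produces their geometric mean, which is exactly the asserted bound. (One may assume both square‑sums are finite and nonzero, the inequality being trivial otherwise.)

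There is no genuine analytic obstacle here; the one point demanding a little care is that $N_\ast$ must live on the dyadic lattice, so the optimization is carried out there at the cost of an absolute constant. Conceptually, the inequality is the endpoint Sobolev embedding $\dot W^{1,3}(\R^3)\hookrightarrow L^\infty$ --- which fails in the $L^p$‑based scale but is rescued by summing over Littlewood--Paley pieces, i.e.\ by passing to the Besov space $\dot B^{1}_{3,1}$ --- and the mixed $L^2$/$L^6$ formulation is precisely what lets the resulting time‑integrated estimate close against $\|\nabla u\|_{L^\infty_t L^2_x}$.
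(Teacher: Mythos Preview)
Your proof is correct.  It is also a genuinely different route from the paper's.  The paper works directly with the spacetime norm: it writes
\[
\|u\|_{L_t^4L_x^\infty}^4 \lesssim \int_I \Bigl(\sum_N \|u_N(t)\|_{L_x^\infty}\Bigr)^4\,dt,
\]
expands the fourth power as a four-fold sum over ordered frequencies $N_1\leq N_2\leq N_3\leq N_4$, places the two smallest in $L^\infty_t$ and the two largest in $L^2_t$, applies Bernstein to produce the factor $[N_1N_2/(N_3N_4)]^{1/2}$, sums out $N_1,N_2$ crudely, and finishes with Schur's test on the remaining $N_3,N_4$ sum.  You instead isolate a clean pointwise-in-time Besov interpolation inequality $\|f\|_{L^\infty_x}\lesssim\|f\|_{\dot B^1_{2,2}}^{1/2}\|f\|_{\dot B^1_{6,2}}^{1/2}$ via a frequency split and Cauchy--Schwarz, and only then integrate in time.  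Your argument is slightly more modular and avoids Schur's test; the paper's is more hands-on but gets there in one pass without an optimization step.  Both are short and use the same Bernstein inputs.
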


\begin{proof}
Using Bernstein's inequality we have,
\begin{align*}
\|u\|_{L_t^4L_x^\infty(I\times\R^3)}^4
&\lesssim \int_I \Bigl\{  \sum_{N\in2^\Z} \|u_N(t)\|_{L_x^\infty} \Bigr\}^4 \,dt\\
&\hspace*{-4em}\lesssim \sum_{N_1\leq N_2\leq N_3\leq N_4}\|u_{N_1}\|_{L^\infty_t L_x^\infty} \|u_{N_2}\|_{L^\infty_t L_x^\infty} \|u_{N_3}\|_{L^2_t L_x^\infty} \|u_{N_4}\|_{L^2_t L_x^\infty} \\
&\hspace*{-4em}\lesssim \sum_{N_1\leq \cdots \leq N_4} \Bigl[\tfrac{N_1N_2}{N_3N_4}\Bigr]^{\frac12} \|\nabla u_{N_1}\|_{L^\infty_t L_x^2} \|\nabla u_{N_2}\|_{L^\infty_t L_x^2}
         \|\nabla u_{N_3}\|_{L^2_t L_x^6} \|\nabla u_{N_4}\|_{L^2_t L_x^6} \\
&\hspace*{-4em}\lesssim \|\nabla u\|_{L^\infty_t L_x^2}^2 \sum_{N_3\leq N_4} \Bigl[\tfrac{N_3}{N_4}\Bigr]^{\frac12} \|\nabla u_{N_3}\|_{L^2_t L_x^6} \|\nabla u_{N_4}\|_{L^2_t L_x^6}.
\end{align*}
All spacetime norms above are over $I\times\R^3$.  The claim now follows from Schur's test.
\end{proof}

%
%
%
%

\section{Maximal Strichartz estimates}\label{S:Maxi}

\begin{proposition}\label{P:MaxS}
Let $(i\partial_t +\Delta)v =F+G$ on a compact interval $[0,T]$.  Then for each $6<q\leq\infty$,
\begin{align*}
\Bigl\| M(t)^{\frac3q-1} \bigl\| P_{M(t)} v(t) \bigr\|_{L^q_x} \Bigr\|_{L^2_t} \lesssim \bigl\||\nabla|^{-\frac12} v \bigr\|_{L^\infty_t L^2_x}
        + \bigl\||\nabla|^{-\frac12} G \bigr\|_{L^2_t L^{6/5}_x} + \|F\|_{L^2_t L^1_x}
\end{align*}
uniformly for all functions $M:[0,T]\to 2^\Z$.  All spacetime norms are over $[0,T]\times\R^3$.
\end{proposition}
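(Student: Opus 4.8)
The plan is to insert the Duhamel formula, separating the linear evolution from the two forcing terms. Writing
\[
v(t)=e^{it\Delta}v(0)-i\int_0^t e^{i(t-s)\Delta}F(s)\,ds-i\int_0^t e^{i(t-s)\Delta}G(s)\,ds=:v_0(t)+v_F(t)+v_G(t),
\]
the first observation is that the weight $M(t)^{\frac3q-1}$ is calibrated to $\dot H^{-1/2}_x$-scaling: by Bernstein, for $q\ge6$ and any dyadic $N$ one has $N^{\frac3q-1}\|P_Nw\|_{L^q_x}\lesssim N^{-\frac12}\|P_Nw\|_{L^6_x}\sim\||\nabla|^{-1/2}P_Nw\|_{L^6_x}$, so that for any $w$ on $[0,T]\times\R^3$,
\[
\Bigl\|M(t)^{\frac3q-1}\bigl\|P_{M(t)}w(t)\bigr\|_{L^q_x}\Bigr\|_{L^2_t}\le\Bigl\{\sum_{N\in 2^\Z}\bigl\||\nabla|^{-1/2}P_Nw\bigr\|_{L^2_tL^6_x}^2\Bigr\}^{1/2}.
\]
Applying this with $w=v_0$, the right side is $\lesssim\||\nabla|^{-1/2}v(0)\|_{L^2_x}\le\||\nabla|^{-1/2}v\|_{L^\infty_tL^2_x}$ by the ($L^2_x$-admissible) Strichartz inequality for $e^{it\Delta}$ and Plancherel; applying it with $w=v_G$, the retarded Strichartz inequality into $L^2_tL^6_x$ from the dual endpoint $L^2_tL^{6/5}_x$, combined with the Littlewood--Paley square-function estimate in $L^{6/5}_x$ (where Minkowski's inequality points the right way), gives $\lesssim\||\nabla|^{-1/2}G\|_{L^2_tL^{6/5}_x}$. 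Nothing beyond ordinary Strichartz theory is needed for these two pieces.

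The term $v_F$ is the crux, and here the square-summation just used is fatally lossy: $L^2_tL^1_x$ is not a dual Strichartz norm, $\|P_NF\|_{L^1_x}\lesssim\|F\|_{L^1_x}$ for every $N$, and $\sum_N1=\infty$, so the supremum over frequency projections in the statement really is needed (this is the point of Remark~\ref{R:no better}). I would dualize instead: the quantity to estimate equals
\[
\sup\Bigl\{\int_0^T\!\!\int_0^t M(t)^{\frac3q-1}\bigl\langle F(s),\ e^{-i(t-s)\Delta}P_{M(t)}H(t)\bigr\rangle\,ds\,dt\ :\ \|H\|_{L^2_tL^{q'}_x}\le1\Bigr\},
\]
which, after Fubini and H\"older in $(s,x)$, is at most $\|F\|_{L^2_tL^1_x}\cdot\sup_H\bigl\|\int_s^T M(t)^{\frac3q-1}e^{-i(t-s)\Delta}P_{M(t)}H(t)\,dt\bigr\|_{L^2_sL^\infty_x}$. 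Writing $\Theta_H(s)$ for the inner integral, its $N$-th Littlewood--Paley piece is fed only by times with $M(t)\sim N$; Bernstein from $L^q_x$ to $L^\infty_x$, the frequency-localized dispersive estimate
\[
\|e^{i\tau\Delta}P_Nf\|_{L^q_x}\lesssim N^{3/q}\min\bigl(N^{2\beta},\,|\tau|^{-\beta}\bigr)\|f\|_{L^1_x},\qquad\beta:=\tfrac32\bigl(1-\tfrac2q\bigr)
\]
(gotten by interpolating $\|e^{i\tau\Delta}P_Nf\|_{L^\infty_x}\lesssim\min(N^3,|\tau|^{-3/2})\|f\|_{L^1_x}$ against mass conservation), and the bounded overlap of the sets $\{t:M(t)\sim N\}$, together make all the powers of $N$ cancel and reduce the matter to boundedness on $L^2_t$ of the scalar integral operator with kernel
\[
K(s,t)=\min\bigl(M(t)^2,\ M(t)^{2(1-\beta)}|t-s|^{-\beta}\bigr)\mathbf{1}_{\{0\le s\le t\le T\}}.
\]
This step is a reincarnation of the double Duhamel trick of \cite[\S14]{CKSTT:gwp}; compare Proposition~\ref{P:DDS}.

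The step I expect to be the real obstacle is exactly this $L^2_t$-bound for $K$, uniformly over the arbitrary frequency-scale function $M$. The hypothesis $q>6$, i.e.\ $\beta>1$, is what makes it possible: the two terms in the minimum cross over at $|t-s|\sim M(t)^{-2}$, so each near-diagonal block contributes $O(1)$ while the tail contributes $\int_{M(t)^{-2}}^{\infty}M(t)^{2(1-\beta)}\tau^{-\beta}\,d\tau\lesssim_q1$ — an integral that diverges logarithmically at $q=6$, which is why the proposition excludes that endpoint. But Schur's test with a constant weight does not suffice: one can choose $M$ whose dyadic values accumulate so fast that $\sup_s\int_0^T K(s,t)\,dt=\infty$ even though $\|K\|_{L^2_t\to L^2_t}$ stays bounded. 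Getting around this requires a weighted Schur (Hardy-type) estimate that uses the discreteness of the dyadic scales to telescope the contributions of successive frequency levels rather than sum them crudely; this is the technical heart of the proof and mirrors the bookkeeping in \cite[\S14]{CKSTT:gwp}.
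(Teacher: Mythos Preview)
Your handling of $v_0$ and $v_G$ is fine and matches the paper's treatment of those pieces. The issue is with $v_F$: you correctly reduce to the $L^2_t$-boundedness of the scalar operator with kernel $K(s,t)=\min\bigl(M(t)^2,\,M(t)^{2(1-\beta)}|t-s|^{-\beta}\bigr)$, observe that the constant-weight Schur test fails, and then stop, invoking an unspecified ``weighted Schur (Hardy-type) estimate.'' That is the whole content of the proposition for this term, and you have not proved it. In fact the bound is easier than you fear and requires no bookkeeping over dyadic frequency levels: for each fixed $t$, the kernel $s\mapsto K(s,t)=M(t)^2\,g\bigl(M(t)^2(t-s)\bigr)$ with $g(x)=\min(1,x^{-\beta})$ is a nonnegative \emph{radially decreasing} bump in $t-s$ with $\int_0^\infty g=\tfrac{\beta}{\beta-1}$, so the adjoint operator $g\mapsto\int_0^t K(s,t)\,g(s)\,ds$ is pointwise dominated by $\tfrac{\beta}{\beta-1}[\HLM g](t)$ (one-sided Hardy--Littlewood maximal function), and the $L^2$ bound follows. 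This is exactly where the restriction $q>6$ (i.e.\ $\beta>1$) enters.

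You should also know that what you did is \emph{not} the double Duhamel trick, despite the attribution. You used a single forward Duhamel and then dualized; the resulting kernel lives on $\{s<t\}$ and depends on the difference $t-s$. The paper instead writes $v(t)$ twice --- once by forward Duhamel from $0$, once by backward Duhamel from $T$ --- and, via the elementary Hilbert-space inequality $v=a+b=c+d\Rightarrow\|v\|^2\le3\|a\|^2+3\|c\|^2+2|\langle b,d\rangle|$, isolates the $F$--$F$ cross term $\langle b,d\rangle$. After the change of variables $s'=t-s$, $\tau'=\tau-t$, both integration variables are nonnegative and the composed propagator is $e^{i(s'+\tau')\Delta}$; the resulting Gaussian kernel (Lemma~\ref{L:kernel}) depends on the \emph{sum} $s'+\tau'$ and is shown, by an explicit calculation, to be a superposition of normalized boxes $[0,\ell]\times[0,w]$, giving $|[\HLM f](t)|^2$ with $f=\|F(\cdot)\|_{L^1_x}$. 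Both routes terminate at the Hardy--Littlewood maximal inequality, but the mechanisms are different; in particular the paper's argument never confronts the failure of Schur that worried you.
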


It is not difficult to see that the conclusion is weaker than (and has the same scaling as) $|\nabla|^{-1/2} v \in L^2_tL^6_x$.
In fact, if $F\equiv 0$, this stronger result can be deduced immediately from the Strichartz inequality.  However, this argument does
not extend to give a proof of the proposition because $F\in L^1_x$ does not imply $|\nabla|^{-1/2} F \in L_x^{6/5}$.
Indeed, the whole theory of the energy-critical NLS in three dimensions is dogged by the absence of endpoint estimates of this type.

The freedom of choosing an arbitrary function $M(t)$ makes this a maximal function estimate; at each time one can take the supremum over
all choices of the parameter.  Writing maximal functions in this way yields linear operators and so one may use the method of $TT^*$; this is an old idea
dating at least to the work of Kolmogorov and Seliverstov in the 1920s (cf. \cite[Ch. XIII]{ZygmundBook}).  As we will see, the double Duhamel trick, which underlies
the proof of Proposition~\ref{P:MaxS}, is a variant of the $TT^*$ idea.  Specifically, one takes the inner-product between two \emph{different} representations of~$v(t)$.

The double Duhamel trick was introduced in \cite[\S14]{CKSTT:gwp}.  There it was used for a different purpose, namely,
to obtain control over the mass on balls.  This is then used to estimate error terms in the (localized) interaction
Morawetz identity.  We will also need this information and for exactly the same reasons; see \eqref{E:22 bound}.
The following proposition captures the main thrust of \cite[\S14]{CKSTT:gwp}:

\begin{proposition}\label{P:DDS}
Let $(i\partial_t +\Delta)v =F+G$ on a compact interval $[0,T]$ and let
\begin{align}\label{E:smudge}
[\smudge_R v](t,x) := \biggl( (\pi R^2)^{-3/2} \! \int_{\R^3} |v(t,x+y)|^2 e^{-|y|^2/R^2} \,dy \biggr)^{1/2}.
\end{align}
Then for each $0<R<\infty$ and $6<q\leq\infty$,
\begin{align}\label{fake Strich}
R^{\frac12-\frac3q} \bigl\| \smudge_R v \bigr\|_{L^2_t L^q_x} \lesssim \|v\|_{L^\infty_t L^2_x} + \|G\|_{L^2_t L^{6/5}_x} + R^{-\frac12} \|F\|_{L^2_t L^1_x},
\end{align}
where all spacetime norms are over $[0,T]\times\R^3$.
\end{proposition}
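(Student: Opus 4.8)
The plan is to exploit the double Duhamel representation of $v(t)$ together with a bilinear estimate coming from the dispersive inequality. Fix $t\in[0,T]$. Writing Duhamel's formula forward from $0$ and backward from $T$, we obtain two representations
\begin{align*}
v(t) &= e^{it\Delta} v(0) - i \int_0^t e^{i(t-s)\Delta}(F+G)(s)\, ds,\\
v(t) &= e^{i(t-T)\Delta} v(T) + i \int_t^T e^{i(t-s)\Delta}(F+G)(s)\, ds.
\end{align*}
Rather than take the inner product of these two representations directly, I would test against a Gaussian and reduce matters to the pointwise quantity $\smudge_R v$. Specifically, $[\smudge_R v](t,x)^2 = \langle v(t), g_{R,x}(t)\rangle$ where $g_{R,x}$ is obtained from $v(t)$ by multiplication by the translated Gaussian $(\pi R^2)^{-3/2} e^{-|\cdot-x|^2/R^2}$. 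The key bilinear input is that the Gaussian is essentially a bump at scale $R$, so $\|g_{R,x}(t)\|_{L^1_x}\lesssim R^{3/2}\|v(t)\|_{L^\infty_t L^2_x}\cdot(\text{local }L^2)$ — more precisely one uses $\|g_{R,x}\|_{L^1_x}\lesssim [\smudge_R v](t,x)\,R^{3/2}$ and similar control in $L^{6/5}_x$, $L^2_x$. The payoff of the Gaussian smudging is that it makes these dual norms behave well under the free propagator.

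The main step is then to pair one representation of $v(t)$ against $g_{R,x}(t)$ using the \emph{other} representation, i.e. to estimate
\[
[\smudge_R v](t,x)^2 = \langle \text{(forward rep.)}, g_{R,x}(t)\rangle = \langle \text{(forward rep.)}, \text{(dual of backward rep.)}\rangle,
\]
expanding into linear$\times$linear, linear$\times$Duhamel, and Duhamel$\times$Duhamel pieces. For the terms built from $G$ one uses the Strichartz inequality with the admissible pair $(2,6)$ and its dual $(2,6/5)$ — here the Gaussian localization ensures the test function sits in $L^2_t L^{6/5}_x$ with norm controlled by $R^{3/2-3/q}\|\smudge_R v\|_{L^2_t L^q_x}$, which after integrating in $x$ and $t$ and dividing yields the claimed $\|G\|_{L^2_t L^{6/5}_x}$ contribution. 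For the terms built from $F$, one cannot afford a Strichartz pairing (this is exactly the endpoint obstruction flagged in the text); instead one pairs $F\in L^2_t L^1_x$ against $e^{i(s-s')\Delta}$ applied to a Gaussian, and invokes the dispersive inequality $\|e^{i\tau\Delta}h\|_{L^\infty_x}\lesssim |\tau|^{-3/2}\|h\|_{L^1_x}$ together with the trivial $L^2\to L^2$ bound, interpolating to get a kernel decaying like $\langle \tau/R^2\rangle^{-3/2}$ in the time separation $\tau$ (the $R^2$ because the Gaussian has spread $R$). This kernel is integrable in $\tau$ with total mass $\sim R^2$, and a Schur/Young argument in the time variable then produces the factor $R^{-1/2}$ in front of $\|F\|_{L^2_t L^1_x}$ after accounting for the $R^{1/2-3/q}$ prefactor and one power of $R$ absorbed by $\|\smudge_R v\|_{L^2_t L^q_x}$ on the right.

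The delicate point — and the step I expect to be the main obstacle — is the bookkeeping of the $R$-powers and the time-decay in the double-Duhamel/$F$ term: one must check that the Gaussian-smudged free evolution $\smudge_R e^{i\tau\Delta}(\cdot)$ genuinely gains the $\langle\tau\rangle^{-3/2}$-type decay at time scale $R^2$ uniformly, and that the resulting convolution in $t$ closes with exactly the exponent $2$ on both sides so that $\|\smudge_R v\|_{L^2_t L^q_x}$ can be moved to the left-hand side (this requires that the quantity be finite a priori, which follows from Strichartz on the compact interval $[0,T]$ applied to $v$ itself). The restriction $q>6$ enters because one needs $3/q<1/2$ so that the local $L^q_x$ Gaussian norm is controlled by $\smudge_R v$ with a favorable power of $R$; at $q=6$ the scaling is borderline and the argument would instead just reproduce ordinary Strichartz, consistent with the remark following the statement. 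Once these power counts are verified, summing the three types of contributions and dividing through gives \eqref{fake Strich}.
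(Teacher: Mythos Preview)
Your outline captures the right ingredients (double Duhamel, Strichartz for the $G$-part, dispersive decay for the $F$-part), but the architecture you describe --- a bootstrap in which $\|\smudge_R v\|_{L^2_tL^q_x}$ appears on the right and is then divided out --- is not how the argument closes, and it runs into a concrete obstacle.  If you substitute only one Duhamel representation and pair against $g_{R,x}=W_x v(t)$, then for the $F$-term you need $\|e^{-i(t-s)\Delta}[W_x v(t)]\|_{L^\infty_x}$ uniformly for small $|t-s|$, and there is no way to control this by $[\smudge_R v](t,x)$ alone since $W_x v(t)$ has no frequency localization.  If instead you substitute both representations, you face the cross terms in which an $F$-Duhamel from one side is paired (through the Gaussian) with the boundary datum or the $G$-Duhamel from the other side; the paper explicitly flags these as the terms that are ``not clear how to estimate,'' and your write-up does not address them.

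The device that makes the proof work is the elementary Hilbert-space inequality \eqref{HS trick}: if $v=a+b=c+d$ then $\|v\|^2\leq 3\|a\|^2+3\|c\|^2+2|\langle b,d\rangle|$.  One takes $a,c$ to be the boundary-plus-$G$ pieces of the forward and backward Duhamel formulae and $b,d$ the respective $F$-pieces.  This sidesteps the cross terms entirely: the $a$- and $c$-contributions are full squares, bounded directly by Strichartz (no $\smudge_R v$ on the right), while only the $F$--$F$ pairing $\langle b,d\rangle$ survives.  That last term is not governed by a one-variable kernel $\langle\tau/R^2\rangle^{-3/2}$ as you suggest; it involves the explicit two-variable kernel $K_R(\tau,z;s,y;x)$ of Lemma~\ref{L:kernel}, whose $L^{q/2}_x$-norm is shown to be dominated by a superposition of normalized rectangles in $(\tau,s)$, yielding a Hardy--Littlewood maximal function bound on $f(t)=\|F(t)\|_{L^1_x}$ rather than a Young/Schur convolution.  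The restriction $q>6$ enters precisely in the integrability of that two-dimensional kernel.
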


We use the letter $\smudge$ for the operator appearing in \eqref{E:smudge} to signify both `smudging' and `square function'.  It is easy to see that the
Gaussian smudging used here could be replaced by other methods without affecting the result; indeed, the analogous estimate in \cite{CKSTT:gwp} averages over balls.
That paper also sets $q=100$ and sums over a lattice rather than integrating in $x$.  As $\smudge v$ is slowly varying, summation and integration yield comparable norms.

To control $\smudge_R v$ we need to estimate some complicated oscillatory (and non-oscillatory) integrals.   By choosing a Gaussian weight, some of the integrals can be
done both quickly and exactly; see the proof of Lemma~\ref{L:kernel}.  Before turning to that subject, we first show how the two propositions are inter-connected.  The proof
of the next lemma also demonstrates how bounds on $\smudge_R$ can be used to deduce analogous results with other weights.

\begin{lemma}\label{L:PM vs SR} Fix $6<q\leq \infty$.  Then
\begin{equation}\label{E:PM vs SR}
\sup_{M>0} M^{\frac3q -1} \bigl\| f_M \bigr\|_{L^{q}_x} \lesssim \sup_{M>0} M^{\frac3q -1} \bigl\| \smudge_{M^{-1}} \bigl(f_M \bigr) \bigr\|_{L^{q}_x}.
\end{equation}
\end{lemma}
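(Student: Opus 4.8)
The plan is to exploit the fact that for a function $f_M$ with frequency support in the annulus $|\xi|\sim M$, the smudging operator $\smudge_{M^{-1}}$ is essentially invertible on the relevant frequency range: convolving $|f_M|^2$ with the Gaussian $(\pi M^{-2})^{-3/2}e^{-M^2|y|^2}$ (whose Fourier transform is a Gaussian concentrated on $|\xi|\lesssim M$) does not destroy the $L^q_x$ size of $f_M$, at least after a harmless Littlewood--Paley fattening. So the mechanism is: pass from $f_M$ to $[\smudge_{M^{-1}}(f_M)]^2 = G_M * |f_M|^2$ where $G_M$ is the normalized Gaussian, then undo this by observing that on the support of $\widehat{|f_M|^2}$ (which lives in $|\xi|\lesssim M$) the multiplier $\widehat{G_M}$ is bounded below.

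First I would reduce to a pointwise/frequency statement. Write $h := |f_M|^2$, so $\|f_M\|_{L^q_x}^2 = \|h\|_{L^{q/2}_x}$ and $\|\smudge_{M^{-1}}(f_M)\|_{L^q_x}^2 = \|G_M * h\|_{L^{q/2}_x}$; note $q/2 > 3 \geq 1$, so these are genuine norms. Since $f_M = P_M f_M$ with a slightly enlarged Littlewood--Paley projector $\tilde P_M$ (say supported in $|\xi|\in[M/4,4M]$ and equal to one on the support of the symbol of $P_M$), we have $h = \tilde{\tilde P}_{\lesssim M} h$ where $\tilde{\tilde P}_{\lesssim M}$ is a fixed smooth frequency cutoff to $|\xi|\lesssim M$; this is because $\widehat h = \widehat{f_M}\ast\widehat{\overline{f_M}}$ is supported in $|\xi|\lesssim M$. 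Now introduce the fixed-profile multiplier $m(\xi/M)$ where $m$ is the (smooth, strictly positive on any compact set) symbol whose value is $\widehat{G_1}(\xi)$; then $G_M * h$ has Fourier transform $m(\xi/M)\widehat h(\xi)$, and since $m$ is bounded below by a positive constant on $|\xi|\lesssim M$, we can write $h = \psi(\nabla/M)\,(G_M * h)$ for a fixed Schwartz-class convolution kernel $\psi$ built from $\tilde{\tilde P}_{\lesssim M}/m$ by scaling. Scaling invariance makes the $L^p\to L^p$ norm of $\psi(\nabla/M)$ independent of $M$ (Young's inequality, $\|\psi(\nabla/M)\|_{L^p\to L^p}\le \|\psi_M\|_{L^1_x}=\|\psi\|_{L^1_x}$), so $\|h\|_{L^{q/2}_x}\lesssim\|G_M*h\|_{L^{q/2}_x}$ with constant uniform in $M$. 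Taking square roots and then the supremum over $M>0$ gives \eqref{E:PM vs SR}.

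The one point requiring a little care — the main obstacle, such as it is — is the justification that $\tilde{\tilde P}_{\lesssim M}/m$ gives rise to an $M$-independent $L^1$ kernel. This is where choosing the Gaussian weight pays off: $m(\xi)=\widehat{G_1}(\xi)$ is itself a Gaussian, hence smooth, nowhere zero, and with all derivatives controlled, so $\chi(\xi)/m(\xi)$ is Schwartz for any smooth compactly supported $\chi$; its inverse Fourier transform $\psi$ is therefore Schwartz, in particular $L^1_x$, and the dilation $\psi_M(x)=M^3\psi(Mx)$ has $\|\psi_M\|_{L^1_x}=\|\psi\|_{L^1_x}$. (If one insisted on averaging over balls instead, $m$ would vanish at the zeros of the Bessel-type symbol and one would need a more delicate argument, or a different reconstruction; the Gaussian sidesteps this entirely.) Everything else is Young's inequality and the observation that $\widehat{|f_M|^2}$ is supported where $m$ is bounded below. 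No derivative bounds or Strichartz estimates are needed; the lemma is purely about the interplay of a compactly supported frequency constraint with a Gaussian mollifier.
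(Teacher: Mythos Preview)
Your proof is correct and takes a genuinely different route from the paper's.

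The paper works on the physical side: it writes $f_M=\tilde P_M f_M$ with a fattened projector $\tilde P_M$ whose kernel $M^3\psi(Mx)$ is Schwartz, dominates $|\psi|$ pointwise by a superposition $\int_0^\infty \pi^{-3/2}e^{-|x|^2/\lambda^2}\,d\mu(\lambda)$ of Gaussians at \emph{all} scales $\lambda$, applies Cauchy--Schwarz to obtain
\[
|\tilde P_M f_M(x)|^2 \le \int_0^\infty \bigl|[\smudge_{\lambda M^{-1}}(f_M)](x)\bigr|^2 \lambda^3\,d\mu(\lambda),
\]
and finishes with Minkowski in $L^{q/2}_x$. You instead argue on the Fourier side: the key observation is that $h=|f_M|^2$ has frequency support in $\{|\xi|\lesssim M\}$, that $\smudge_{M^{-1}}$ acts on $h$ as the multiplier $e^{-|\xi|^2/(4M^2)}$, which is bounded below on that support, and that the resulting inverse multiplier $\chi(\xi)/m(\xi)$ has Schwartz kernel, so Young's inequality finishes the job.

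Your approach is more direct and actually yields the stronger pointwise-in-$M$ statement $\|f_M\|_{L^q_x}\lesssim\|\smudge_{M^{-1}}(f_M)\|_{L^q_x}$, with no supremum needed on the right. It exploits the Gaussian choice more essentially --- the nonvanishing of $\widehat{G_1}$ is what allows the inversion --- whereas the paper's superposition-of-Gaussians trick is indifferent to the precise smudging profile and would adapt to any rapidly decaying kernel. The paper's route also shows, in passing, how to transfer bounds between different smudging weights; yours makes the role of the compact frequency support of $|f_M|^2$ completely transparent.
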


\begin{proof}
Let $\tilde P_M=P_{M/2}+P_M+P_{2M}$ denote the fattened Littlewood--Paley projector.  The basic relation $P_M=\tilde P_M P_M$ reduces our goal to showing that
\begin{equation}\label{E:PM vs SR'}
  \sup_{M>0} M^{\frac3q -1} \bigl\| \tilde P_M g \bigr\|_{L^{q}_x} \lesssim \sup_{M>0} M^{\frac3q-1} \bigl\| \smudge_{M^{-1}}  g \bigr\|_{L^{q}_x}
\end{equation}
for general functions $g:\R^3\to\C$, say, $g=f_M$.

Recall that the convolution kernel for $\tilde P_M$ takes the form $M^3\psi(Mx)$ for some Schwartz function $\psi$.  By virtue of its rapid decay,
we can write
$$
 |\psi(x)| \leq \int_0^\infty \pi^{-3/2} e^{-|x|^2/\lambda^2} \,d\mu(\lambda)
$$
where $\mu$ is a positive measure with all moments finite.  Indeed, since $\psi$ is radial one can choose $d\mu(\lambda) = 20 |\psi'(\lambda)|\,d\lambda$.
Thus by the Cauchy--Schwarz inequality,
$$
\bigl| [\tilde P_M g](x) \bigr|^2 \leq \int_{\R^3} |g(x+y)|^2 M^3|\psi(My)| \,dy  \leq \int_0^\infty \bigl|[\smudge_{\lambda M^{-1}} g](x)\bigr|^{2} \lambda^{3}\,d\mu(\lambda).
$$
Applying Minkowski's inequality in $L^{q/2}_x(\R^3)$ then easily yields \eqref{E:PM vs SR'}; indeed, one can take the
constant to be $[\int \lambda^{1+6/q}\,d\mu(\lambda)]^{1/2}$.
\end{proof}

\begin{lemma}\label{L:kernel} For fixed $6<q\leq \infty$, the integral kernel
$$
K_R(\tau,z;s,y;x) := (\pi R^2)^{-3/2} \langle \delta_z,\ e^{i\tau\Delta} e^{-|\cdot-x|^2/R^2} e^{is\Delta} \delta_y \rangle
$$
obeys
\begin{equation}\label{E:GaussOrgy}
\!\sup_{R>0} \int_0^\infty \!\!\!\! \int_0^\infty \!\! R^{2-\frac6q} \|K_R(\tau,z;s,y;x)\|_{L^\infty_{z,y} L^{q/2}_x} f(t+\tau) f(t-s)\,ds\,d\tau  \lesssim \bigl| [\HLM f](t) \bigr|^2,
\end{equation}
where $\HLM$ denotes the Hardy--Littlewood maximal operator and $f:\R\to[0,\infty)$.
\end{lemma}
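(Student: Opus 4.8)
Looking at Lemma~\ref{L:kernel}, I need to prove a bound on the integral kernel that arises when computing the inner product of two Duhamel representations of $v(t)$.

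Let me analyze the kernel. We have
$$K_R(\tau,z;s,y;x) = (\pi R^2)^{-3/2} \langle \delta_z, e^{i\tau\Delta} e^{-|\cdot - x|^2/R^2} e^{is\Delta}\delta_y\rangle.$$
This is the kernel of the operator $e^{i\tau\Delta} M_x e^{is\Delta}$ where $M_x$ is multiplication by the Gaussian $(\pi R^2)^{-3/2} e^{-|\cdot-x|^2/R^2}$. Using the explicit formula for $e^{it\Delta}$, this composition is a product of three Gaussians (two oscillatory heat-type kernels and one genuine Gaussian), so the integral defining it can be computed exactly by Gaussian integration. The result is a Gaussian in $z-y$ (and in the relevant variables) whose covariance depends on $\tau$, $s$, $R$; the key features will be an overall amplitude like $(\tau^2 + \text{stuff})^{-3/2}$-type decay reflecting dispersion and the $L^1\to L^\infty$ bound, combined with Gaussian localization.

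Here is the plan. First I would write down $K_R$ explicitly by carrying out the Gaussian integral; the cleanest route is to note $e^{i\tau\Delta}e^{-|\cdot-x|^2/R^2}e^{is\Delta}$ has a kernel that is, up to constants, a complex Gaussian in all variables, and that $e^{i\tau\Delta}G_{R^2}e^{is\Delta} = e^{i\tau\Delta}G_{R^2}$ composed appropriately --- in fact one can think of the Gaussian weight as $e^{a\Delta}$ for a complex "time" $a$ with $\Re a > 0$, namely $a = R^2/4$, so that $e^{i\tau\Delta}e^{-|\cdot-x|^2/R^2}e^{is\Delta}\delta_y(z)$ is, after translating by $x$, the kernel of $e^{(i\tau + R^2/4 + is)\Delta}$ evaluated suitably --- wait, more carefully, multiplication by the Gaussian is not $e^{a\Delta}$, but one still gets a three-fold Gaussian convolution/product that evaluates in closed form. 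I would then read off that
$$\|K_R(\tau,z;s,y;x)\|_{L^\infty_{z,y}L^{q/2}_x} \lesssim R^{\frac{6}{q}-2}\,\frac{R^2}{(R^2+\tau^2)^{1/2}(R^2+s^2)^{1/2}}\cdot(\text{harmless})$$
or something of this shape; the precise exponents are dictated by scaling (both sides must scale correctly under $R \mapsto \lambda R$, $\tau\mapsto\lambda^2\tau$, $s\mapsto\lambda^2 s$), which fixes everything up to constants. The upshot should be a bound of the form $R^{2-6/q}\|K_R\|_{L^\infty_{z,y}L^{q/2}_x} \lesssim \frac{R^2}{(R^2+\tau)(R^2+s)}$ (in terms of $\tau,s\geq 0$), i.e.\ integrable separately in $\tau$ and in $s$ with the right homogeneity.

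With that kernel bound in hand, the conclusion \eqref{E:GaussOrgy} becomes a statement about the bilinear form
$$\int_0^\infty\int_0^\infty \frac{R^2}{(R^2+\tau)(R^2+s)} f(t+\tau)f(t-s)\,ds\,d\tau = \Bigl(\int_0^\infty \frac{R\,f(t+\tau)}{R^2+\tau}\,d\tau\Bigr)\Bigl(\int_0^\infty \frac{R\,f(t-s)}{R^2+s}\,ds\Bigr),$$
which factors. So it suffices to show $\int_0^\infty \frac{R}{R^2+\tau}f(t+\tau)\,d\tau \lesssim [\HLM f](t)$ uniformly in $R>0$. This is the classical fact that convolution against a nonnegative, radially-decreasing, $L^1$-normalized approximate identity (here in one variable, the kernel $\tau\mapsto \frac{R}{R^2+|\tau|}$ up to constants --- note $\int \frac{R}{R^2+\tau^2}d\tau$ is the right normalization, and here we have $R^2+\tau$ with $\tau\geq 0$ which is comparable to $R^2 + \tau^2$ only... hmm, actually I expect the honest kernel from the computation to be $\frac{R^2}{R^2+\tau^2}$-type in each variable, which is exactly Poisson-kernel-like and dominated by a decreasing $L^1$ bump) is pointwise bounded by the Hardy--Littlewood maximal function; one proves it by the standard layer-cake/dyadic decomposition of the kernel into averages over intervals. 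Squaring (via the factorization) gives $|[\HLM f](t)|^2$, and the supremum over $R$ is automatic since the maximal-function bound is $R$-uniform. The main obstacle is purely the first step: carrying out the triple Gaussian integration carefully enough to extract the sharp pointwise kernel bound with the correct powers of $R$, $\tau$, $s$ --- once that is done, the rest is a routine reduction to the boundedness of approximate identities by the maximal function. I would double-check the exponents by the scaling symmetry noted above and by the special case $q=\infty$ (where $L^{q/2}_x = L^\infty_x$ and the bound should reduce to an easy sup of Gaussians).
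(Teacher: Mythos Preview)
Your overall strategy---compute the Gaussian integral exactly, then reduce to a maximal-function bound---is the same as the paper's, and the first step does go through: completing the square in the defining integral gives
\[
\|K_R(\tau,z;s,y;x)\|_{L^{q/2}_x} \sim R^{-6/q}\,|s+\tau|^{-6/q}\bigl[\,s^2\tau^2 + R^4(s+\tau)^2\,\bigr]^{-3/4+3/q},
\]
independent of $z,y$. After the substitution $\alpha=R^{-2}\tau$, $\beta=R^{-2}s$, the quantity $R^{2-6/q}\|K_R\|$ becomes the $R$-independent kernel
\[
K^*_q(\alpha,\beta)=(\alpha+\beta)^{-6/q}\bigl[\alpha^2\beta^2+(\alpha+\beta)^2\bigr]^{-3/4+3/q}.
\]

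Here is the gap. You conjecture that this kernel is dominated by a \emph{product} $g(\alpha)g(\beta)$ of Poisson-type bumps like $(1+\alpha)^{-1}$ or $(1+\alpha^2)^{-1}$, so that the double integral factors and each factor is handled by the one-variable ``decreasing approximate identity $\lesssim \HLM$'' lemma. But $K^*_q$ is genuinely non-separable: along the diagonal $\alpha=\beta\to 0$ it blows up like $\alpha^{-3/2}$, while any bounded-near-zero product $g(\alpha)g(\beta)$ stays bounded there. So your specific factored bound is simply false. (One can try $g(\alpha)\sim\alpha^{-3/4}(1+\alpha)^{-3/4+3/q}$ to match the diagonal singularity, but verifying $K^*_q\le Cg(\alpha)g(\beta)$ globally is delicate and you have not done it.)

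The paper circumvents factorization entirely. Since $K^*_q$ is decreasing in each variable, one writes it as a \emph{positive superposition of $L^1$-normalized rectangles},
\[
K^*_q(\alpha,\beta)=\int_0^\infty\!\!\int_0^\infty \tfrac{1}{\ell}\chi_{[0,\ell]}(\alpha)\,\tfrac{1}{w}\chi_{[0,w]}(\beta)\,\rho(\ell,w)\,d\ell\,dw,
\qquad \rho=\ell w\,\partial_\ell\partial_w K^*_q\ge 0.
\]
Each rectangle, applied to $f(t+R^2\alpha)f(t-R^2\beta)$, yields a product of two one-sided averages of $f$, each $\lesssim[\HLM f](t)$; summing costs only $\iint\rho=\iint K^*_q\,d\alpha\,d\beta$, which a polar-coordinate computation shows is finite exactly when $q>6$. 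This is the genuine two-variable analogue of the ``radially decreasing $L^1$ majorant'' argument you invoke, and it is what is actually needed here.
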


\begin{proof}
From the exact formula for the propagator,
\begin{align}\label{Kdefiningintegral}
K_R(t,z;s,y;x) &=  \int_{\R^3} \frac{ \exp\{ i|z-x'|^2/4\tau - |x'-x|^2/R^2 + i|x'-y|^2/4s  \} }{(4\pi i \tau)^{3/2} (4\pi i s)^{3/2} (\pi R^2)^{3/2}} \,dx'.
\end{align}
Completing the square and doing the Gaussian integral yields
\begin{align*}
|K_R(t,z;s,y;x)| &= (2\pi)^{-3}\bigl[16s^2\tau^2+R^4(s+\tau)^2\bigr]^{-3/4} \exp\Bigl\{ - \frac{R^2(s+\tau)^2 |x-x^*|^2}{16s^2\tau^2+R^4(s+\tau)^2} \Bigr\}
\end{align*}
where $x^* = (sz+ty)/(s+t)$. One more Gaussian integral then yields
\begin{align*}
\|K_R(\ldots)\|_{L^{q/2}_x} &= (2\pi)^{-3} (2\pi/q)^{3/q} R^{-6/q} |s+\tau|^{-6/q} [16s^2\tau^2+R^4(s+\tau)^2]^{-3/4+3/q}.
\end{align*}
Notice that there is no dependence on $z$ or $y$.  This is due to simultaneous translation and Galilei invariance.  In this way, we deduce that
\begin{equation}
\text{LHS\eqref{E:GaussOrgy}} \lesssim \sup_{R>0} \, \int_0^\infty \!\! \int_0^\infty K^*_q(\alpha,\beta) f(t+R^2\alpha) f(t-R^2\beta)\,d\alpha\,d\beta,
\end{equation}
where we have changed variables to $\alpha=R^{-2}\tau$ and $\beta=R^{-2}s$ and written
$$
K^*_q(\alpha,\beta) := \bigl[\alpha+\beta\bigr]^{-6/q} \bigl[\alpha^2\beta^2 + (\alpha+\beta)^2\bigr]^{-3/4+3/q}.
$$

To finish the proof, we just need to show that $K^*_q$ can be majorized by a convex combination of ($L^1$-normalized) characteristic functions of rectangles
of the form $[0,\ell]\times[0,w]$.  In fact, we can write it exactly as a positive linear combination of such rectangles:
$$
K^*_q(\alpha,\beta) = \int_0^\infty \!\! \int_0^\infty \tfrac{1}{\ell} \chi_{[0,\ell]}(\alpha) \, \tfrac{1}{w} \chi_{[0,w]}(\beta) \, \rho(\ell,w) \,d\ell\,dw
    = \int_\alpha^\infty \!\! \int_\beta^\infty \frac{\rho(\ell,w)\,d\ell\,dw}{\ell w}
$$
where $\rho(\ell,w) := \ell w \partial_\ell\partial_w K^*_q(\ell,w) \geq 0$.  Thus, we just need to check that $\rho\in L^1$.  With a little patience, one finds that
$\rho(\ell,w) \lesssim_q K^*_q(\ell,w)$, which leaves us to integrate the latter over a quadrant. We use polar coordinates, $\ell+iw=r e^{i\theta}$:
\begin{align*}
\int_0^\infty \!\! \int_0^\infty  K^*_q(\ell,w) \,d\ell\,dw
&\lesssim \int_0^{\infty} \!\! \int_0^{\pi/2} r^{-6/q} [r^4\sin^2(2\theta)+r^2]^{-3/4+3/q} \,r\,d\theta\,dr \\
&\lesssim \int_0^{\infty} r^{-1/2} (1+r)^{-3/2+6/q} \,dr \lesssim 1.
\end{align*}
Notice that convergence of the $r$ integral relies on $q>6$.  The estimate for the $\theta$ integral given above is only valid in the range $6<q<12$.
When $q>12$, the correct form is $\int r^{-1/2} (1+r)^{-1}\,dr$ and when $q=12$, it is $\int r^{-1/2} \log(2+r)(1+r)^{-1}\,dr$.  Nevertheless, both of these
integrals are also finite.
\end{proof}

We now have all the necessary ingredients to complete the proofs of Propositions~\ref{P:MaxS} and~\ref{P:DDS}.  We only provide the details for the former
because the two arguments are so similar.  Indeed, the proof of the latter essentially follows by choosing $M(t)\equiv R^{-1}$ and throwing away the Littlewood-Paley
projector $P_{M(t)}$ in the argument we are about to present.

\begin{proof}[Proof of Proposition~\ref{P:MaxS}]  In view of Lemma~\ref{L:PM vs SR} we need to show that
$$
\sup_{M>0} M^{\frac3q -1} \bigl\| \smudge_{M^{-1}} \bigl(P_M v(t)\bigr) \bigr\|_{L^{q}_x}   \in L^2_t([0,T])
$$
(with suitable bounds), where the supremum is taken pointwise in time.

As noted earlier, we will use the double Duhamel trick, which relies on playing two Duhamel formulae off against one another, one from each endpoint of $[0,T]$:
\begin{align}
v(t) &= e^{it\Delta}v(0) - i \int_0^t e^{i(t-s)\Delta} G(s)\,ds - i \int_0^t e^{i(t-s)\Delta} F(s)\,ds  \label{twoDuhamels1} \\
&= e^{-i(T-t)\Delta}v(T) + i \int_t^T e^{-i(\tau-t)\Delta} G(\tau)\,d\tau  + i \int_t^T e^{-i(\tau-t)\Delta} F(\tau)\,d\tau. \label{twoDuhamels2}
\end{align}
The idea is to compute the $L^2_x$ norm of $P_M v(t)$ with respect to the Gaussian measure that defines $[\smudge_{M^{-1}} P_M v](t,x)$ by taking the inner product between these
two representations.  Actually, we deviate slightly from this idea because it is not clear how to estimate a pair of cross-terms.  Our trick for avoiding this is the
following simple fact about vectors in a Hilbert space:
\begin{equation}\label{HS trick}
v=a+b=c+d \quad\implies\quad \|v\|^2 \leq 3\|a\|^2 + 3\|c\|^2 + 2|\langle b,d\rangle|.
\end{equation}
(The numbers are neither optimal nor important.) To prove this, write
$$
\|v\|^2 = \langle a,v\rangle + \langle v,c\rangle - \langle a,c\rangle + \langle b,d\rangle
$$
and then use the Cauchy--Schwarz inequality.

Let us invoke \eqref{HS trick} with $a$ and $c$ representing ($P_M$ applied to) the first two summands in \eqref{twoDuhamels1} and \eqref{twoDuhamels2}, respectively, while $b$ and $d$ represent the summands
which involve $F$.  In this way, we obtain the pointwise statement
\begin{align*}
\Bigl|\bigl[\smudge_{M^{-1}} \bigl(P_M v\bigr)\bigr](t,x)\Bigr|^2 &\lesssim \Bigl| \smudge_{M^{-1}} \Bigl( e^{it\Delta} v_M(0)  - i\!\!\int_0^t\! e^{i(t-s)\Delta} G_M(s)\,ds \Bigr)(x)\Bigr|^2 \\
    &\quad + \Bigl| \smudge_{M^{-1}} \Bigl( e^{-i(T-t)\Delta}v_M(T) + i\!\!\int_t^T\!\! e^{-i(\tau-t)\Delta} G_M(\tau)\,d\tau \Bigr)(x)\Bigr|^2 \\
    &\quad + h_M(t,x),
\end{align*}
where $h_M$ is an abbreviation for
\begin{align*}
 h_M(t,x) :={}& \pi^{-3/2} M^3 \biggl|\biggl\langle \int_t^T e^{-i(\tau-t)\Delta} F_M(\tau)\,d\tau,e^{-M^2|\cdot-x|^2} \int_0^t e^{i(t-s)\Delta} F_M(s)\,ds\biggr\rangle\biggr|
\end{align*}

The contributions of the first two summands are easily estimated: For any function $w$, Young's and Bernstein's inequalities imply
\begin{align*}
M^{\frac3q-1} \bigl\| [\smudge_{M^{-1}}(P_M w)](t,x) \bigr\|_{L^q_x(\R^3)} &\lesssim M^{-\frac12} \bigl\| P_M w(t) \bigr\|_{L^6_x(\R^3)}
    \lesssim \bigl\| |\nabla|^{-\frac12} w(t) \bigr\|_{L^6_x(\R^3)}.
\end{align*}
This can then be combined with Strichartz inequality, which shows
$$
\Bigl\| |\nabla|^{-\frac12} \Bigl( e^{it\Delta} v(0)  - i \!\int_0^t\! e^{i(t-s)\Delta} G(s)\,ds \Bigr) \Bigr\|_{L^2_t L^6_x}
    \lesssim \bigl\| |\nabla|^{-\frac12} v(0) \bigr\|_{L^2_x} + \bigl\||\nabla|^{-\frac12}G\bigr\|_{L^2_tL^{6/5}_x}
$$
and similarly for the second summand.

The third summand, $h_M$, is the crux of the matter.  Using the notation from Lemma~\ref{L:kernel} and changing variables, we have
\begin{align*}
h_M(t,x) &=\biggl| \int_0^{T-t}\!\!\!\int_0^t \!\! \iint\! \bar F_M(t+\tau',z) K_{M^{-1}}(\tau',z;s',y;x) F_M(t-s',y)\,dy\,dz\,ds'\,d\tau' \biggr|.
\end{align*}
Note also that by Bernstein's inequality and the maximal inequality,
$$
f(t) := \|F(t)\|_{L^1_x} \qtq{obeys} \|F_M(t)\|_{L^1_x} \lesssim f(t) \qtq{and} \| \HLM f \|_{L^2_t} \lesssim \|F\|_{L^2_t L^1_x}.
$$
Thus using Lemma~\ref{L:kernel} (with $f$ as just defined), we obtain
\begin{align*}
\Bigl\| \sup_{M>0} M^{\frac6q-2} \| h_M(t) \|_{L^{q/2}_x} \Bigr\|_{L^1_t} &\lesssim \|F\|_{L^2_t L^1_x}^2.
\end{align*}
Recalling that $h_M$ appears in an upper bound on the \emph{square} of the size of $P_M v$, the proposition follows.
\end{proof}

%
%
%
%

\section{Long-time Strichartz estimates}\label{S:LTS}

The main result of this section is a long-time Strichartz estimate.  As will be evident from the proof, the result is also valid for $L_t^\infty \dot H^1_x(\R^3)$ solutions
to the focusing equation; see also Remark~\ref{R:no better} at the end of this section.

\begin{theorem}[Long-time Strichartz estimate]\label{T:LTS}
Let $u:(\Tmin,\Tmax)\times\R^3\to \C$ be a maximal-lifespan almost periodic solution to \eqref{nls} and $I\subset(\Tmin,\Tmax)$ a time interval that is tiled by finitely
many characteristic intervals $J_k$.  Then for any fixed $6<q<\infty$ and any frequency $N>0$,
\begin{align}\label{A}
A(N):=\Bigl\{ \sum_{M\leq N} \|\nabla u_{M}\|_{L_t^2 L_x^6(I\times\R^3)}^2\Bigr\}^{1/2}
\end{align}
and
\begin{align}\label{tilde A}
\tilde A_q(N):= N^{3/2}\Bigl\| \sup_{M\geq N} M^{\frac3q-1} \bigl\| u_M (t) \bigr\|_{L^q_x(\R^3)} \Bigr\|_{L^2_t(I)}
\end{align}
obey
\begin{align}\label{E:A bound}
A(N) + \tilde A_q(N)&\lesssim_u 1+ N^{3/2} K^{1/2},
\end{align}
where $K:=\int_I N(t)^{-1}\, dt$.  The implicit constant is independent of the interval $I$.
\end{theorem}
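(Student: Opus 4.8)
The plan is to prove \eqref{E:A bound} by a bootstrap/induction on the dyadic frequency $N$, exploiting the fact that the quantities $A(N)$ and $\tilde A_q(N)$ grow slowly (at most by a bounded factor) when $N$ is doubled, and closing the estimate by absorbing a small fraction of $A(2N)$ into the left-hand side. First I would fix the interval $I$, write $u = u_{\leq N} + u_{>N}$, and apply the Besov–Strichartz inequality of Lemma~\ref{L:Strichartz} to $u_{\leq N}$ on $I$ with the forcing term $P_{\leq N} F(u)$. Using the characteristic-interval structure, $\|\nabla u\|_{L^q_tL^r_x(J_k)}\sim_u 1$ together with $|J_k|\sim_u N_k^{-2}$ and $N(t)\ge 1$, the free evolution piece contributes $\lesssim_u 1$ (the $\dot H^1_x$ norm at a single time, bounded by almost periodicity). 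The real work is estimating $\|\nabla P_{\leq N} F(u)\|_{L^{2}_tL^{6/5}_x(I)}$ (using the dual pair $(q,r)=(2,6)$, $(\tilde q,\tilde r)=(2,6)$). Here I write $F(u) = \Oh(u_{>N}\,u^4) + \Oh(u_{\leq N}\, u^4)$ and, matching \cite{CKSTT:gwp}, further decompose each high-frequency factor dyadically; the terms with all five factors at frequency $\lesssim N$ are controlled by $A(N)$ times lower-order norms (this is the term that must be absorbed, after paying a small constant via summing the dyadic pieces), while terms with a genuinely high factor are controlled using the equidistribution bounds and Bernstein to gain powers of $N/N'$.

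The second quantity $\tilde A_q(N)$ is where Proposition~\ref{P:MaxS} enters, and this is the heart of the matter. I would apply Proposition~\ref{P:MaxS} to $v = u_{>N}$ on each characteristic interval (or on $I$ directly), with the choice $M(t)$ equal to any frequency $\ge N$; the equation for $u_{>N}$ is $(i\partial_t+\Delta)u_{>N} = P_{>N}F(u)$, and the delicate point is the split of $P_{>N}F(u)$ into an $L^2_tL^{6/5}_x$-after-$|\nabla|^{-1/2}$ piece ($G$) and an $L^2_tL^1_x$ piece ($F$ in the notation of that proposition). The $L^2_tL^1_x$ norm of (the relevant part of) $F(u)$ is the term producing the $N^{3/2}K^{1/2}$ contribution: by Hölder in $x$ and Bernstein, $\||u|^4 u\|_{L^1_x}\lesssim \|u\|_{L^6_x}^{5} \lesssim_u \|\nabla u\|_{L^2_x}^{5}$, but more carefully one tracks the low frequency that makes this subcritical, giving on each $J_k$ a bound $\lesssim_u N_k^{-1/2}$, so summing squares over the $J_k\subset I$ yields $\|F\|_{L^2_tL^1_x}^2\lesssim_u \sum_k N_k^{-1}\sim_u \int_I N(t)^{-1}\,dt = K$, hence the factor $N^{3/2}K^{1/2}$ after accounting for the $N^{3/2}$ in front of $\tilde A_q$ and the frequency-localization gain. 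The $|\nabla|^{-1/2}v$ term on the right of Proposition~\ref{P:MaxS} is $\lesssim_u N^{-1/2}\|\nabla u_{>N}\|_{L^\infty_tL^2_x}\lesssim_u N^{-1/2}$, which after multiplying by $N^{3/2}$ gives $\lesssim_u N$; together with the remaining $G$-terms — which again produce either lower-order constants or a small multiple of $A(2N)+\tilde A_q(2N)$ — this closes the recursion.

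Finally I would set $B(N):= A(N)+\tilde A_q(N)$, having established an inequality of the shape $B(N)\lesssim_u 1 + N^{3/2}K^{1/2} + \eta\, B(2N)$ for a small constant $\eta$ (obtained by restricting the relevant frequency sums, exactly as in \cite{CKSTT:gwp}), together with the crude a priori bound $B(N)<\infty$ for every fixed $N$ (finite since $I$ is a finite union of characteristic intervals, on each of which all Strichartz norms are $\lesssim_u 1$) and the doubling control $B(2N)\lesssim_u B(N) + \text{(acceptable terms)}$. Iterating the recursion and summing the geometric series — using that the driving term $1+N^{3/2}K^{1/2}$ at most doubles under $N\mapsto 2N$ in a controlled way — yields $B(N)\lesssim_u 1+N^{3/2}K^{1/2}$ uniformly in $I$, which is \eqref{E:A bound}.

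\medskip

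The main obstacle I anticipate is the bookkeeping in the nonlinear term: one must simultaneously (i) extract enough powers of $N'/N$ from frequency mismatches to make the high-frequency dyadic sums converge, (ii) isolate exactly the one low frequency that renders the $L^2_tL^1_x$ estimate subcritical so as to produce $K=\int_I N(t)^{-1}\,dt$ rather than a divergent quantity, and (iii) keep the self-referential terms ($A(2N)$, $\tilde A_q(2N)$) multiplied by a genuinely small constant so the bootstrap closes. The choice of the free parameter $M(t)$ in Proposition~\ref{P:MaxS} must be made uniformly over frequencies $\ge N$ to recover the supremum in the definition \eqref{tilde A}, and verifying that the argument is insensitive to this choice — which is precisely the maximal-function strength of Proposition~\ref{P:MaxS} — is what makes the scheme work.
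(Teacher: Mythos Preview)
Your overall architecture is correct and matches the paper's: apply the Besov--Strichartz inequality (Lemma~\ref{L:Strichartz}) to bound $A(N)$, apply Proposition~\ref{P:MaxS} to bound $\tilde A_q(N)$, decompose the nonlinearity, derive a recursion of the shape $B(N)\lesssim_u 1 + N^{3/2}K^{1/2} + \eta\,B(2N)$, and iterate.  The paper packages the recursion as Lemma~\ref{L:A rec} and closes by downward induction from large $N$ (where the bound follows directly from $A(N)^2\lesssim_u\int_I N(t)^2\,dt$); your infinite geometric-series summation is an equivalent way to finish, since $B(M)$ is bounded uniformly in $M$ by the same crude estimate.

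There is, however, a genuine gap in how you obtain the small constant $\eta$.  You write that the term with all five factors at frequency $\lesssim N$ is ``controlled by $A(N)$ times lower-order norms \ldots\ after paying a small constant via summing the dyadic pieces.''  But the natural estimate
\[
\bigl\|\nabla P_{\leq N}\Oh(u_{\leq N}^5)\bigr\|_{L^2_tL^{6/5}_x} \;\lesssim\; \|\nabla u_{\leq N}\|_{L^2_tL^6_x}\,\|u\|_{L^\infty_tL^6_x}^4 \;\lesssim_u\; A(N)
\]
carries no small factor from dyadic summation; the implied constant is $O_u(1)$ and the recursion does not close.  The smallness must come from almost periodicity, which you do not invoke in the nonlinear estimates.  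The paper introduces a \emph{time-dependent} cutoff at $cN(t)$, with $c=c(\eta)$ chosen via Remark~\ref{R:small freq} so that $\|\nabla u_{\leq cN(t)}\|_{L^\infty_tL^2_x}+\|u_{\leq cN(t)}\|_{L^\infty_tL^6_x}\leq\eta$, and decomposes
\[
F(u)=\Oh\bigl(u_{>cN(t)}^2 u^3\bigr)+\Oh\bigl(u_{\leq cN(t)}^2 u_{>N}^2 u\bigr)+\Oh\bigl(u_{\leq cN(t)}^2 u_{\leq N}^2 u\bigr).
\]
The first piece is where the characteristic-interval sum actually enters, via $\|u_{>cN_k}\|_{L^4_{t,x}(J_k)}^2\lesssim_u(cN_k)^{-3/2}$, producing $c^{-3/2}N^{3/2}K^{1/2}$; the third yields $\eta A(N)$ (now genuinely absorbable); and the second gives $\eta^2\tilde A_q(2N)$ through the paraproduct bound $\|\Oh(u_{>N}^2 u)\|_{L^2_tL^{3/2}_x}\lesssim_u N^{-3/2}\tilde A_q(2N)$.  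Your two-term split $\Oh(u_{>N}u^4)+\Oh(u_{\leq N}u^4)$ does not separate these three mechanisms.  (A minor slip: $\||\nabla|^{-1/2}u_{>N}\|_{L^\infty_tL^2_x}\lesssim_u N^{-3/2}$, not $N^{-1/2}$, so after the $N^{3/2}$ prefactor this term contributes $\lesssim_u 1$, which is what you want.)
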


The proof of this theorem will occupy the remainder of this section.  Throughout, we consider a single interval $I$ and so the implicit dependence of $A(N)$, $\tilde A_q(N)$, and $K$
on the interval should not cause confusion.  Additionally, all spacetime norms will be on $I\times\R^3$, unless specified otherwise.

By Bernstein's inequality, $\tilde A_q(N)$ is monotone in $q$.  Thus $q=\infty$ is also allowed.

The analogue of Theorem~\ref{T:LTS} in \cite{CKSTT:gwp} is Proposition~12.1.  Our proof is very different and is inspired by Dodson's work, \cite{Dodson:d>2},
on the mass-critical NLS (see also \cite{Visan:4D}).  In \cite{CKSTT:gwp}, this estimate is derived on the assumption that $u_{>N}$ obeys certain $L^4_{t,x}$
spacetime bounds.  That the solution does admit these spacetime bounds is derived from the interaction Morawetz estimate, using the analogue of \eqref{E:A bound}
to control certain error terms.  This results in a tangled bootstrap argument across several sections of the paper.  The argument that follows does not use the
Morawetz identity, merely Strichartz and maximal Strichartz estimates, and so is equally valid in the focusing case.  We also contend that it is simpler.

The attentive reader will discover that the implicit constant in \eqref{E:A bound} depends only on $u$ through its
$L^\infty_t \dot H^1_x$ norm and its modulus of compactness (cf. Definition~\ref{D:ap}).  Indeed, the dependence on the
latter can be traced to the following: Let $\eta>0$ be a small parameter to be chosen later. Then, by
Remark~\ref{R:small freq} and Sobolev embedding, there exists $c=c(\eta)$ such that
\begin{align}\label{c_0}
\| u_{\leq c N(t)}\|_{L_t^\infty L^6_x} + \|\nabla u_{\leq c N(t)}\|_{L_t^\infty L^2_x}\leq \eta.
\end{align}

By elementary manipulations with the square function estimate and Lemma~\ref{L:4infty}, respectively, we have
\begin{align}\label{duh}
\!\!\! \|\nabla u_{\leq N}\|_{L_t^2 L_x^6}\lesssim A(N),
    \quad
    \| u_{\leq N}\|_{L_t^4 L_x^\infty}\lesssim A(N)^{1/2} \|\nabla u_{\leq N}\|_{L^\infty_t L^2_x}^{1/2}\lesssim_u A(N)^{1/2}.
\end{align}
As noted earlier, the only reason for considering the Besov-type norm that appears in \eqref{A}, rather than the simpler $L_t^2 L_x^6$ norm, is that it allows us to deduce
these $L^4_t L^\infty_x$ bounds.

By combining the Strichartz inequality (Lemma~\ref{L:Strichartz}) with Lemma~\ref{L:ST-N(t)} we have
\begin{align}\label{E:A finite}
A(N)^2 \lesssim_u 1 + \int_I N(t)^2\, dt \lesssim_u \int_I N(t)^2\, dt.
\end{align}
Note that the second inequality relies on the fact that $I$ contains at least one whole characteristic interval $J_k$.  Similarly, using Proposition~\ref{P:MaxS} and then Bernstein's inequality we find
\begin{align*}
\tilde A_q(N)&\lesssim N^{3/2} \Bigl\{ \bigl\| |\nabla|^{-1/2} u_{\geq N}\bigr\|_{L_t^\infty L_x^2} + \bigl\| |\nabla|^{-1/2} P_{\geq N} F(u)\bigr\|_{L_t^2 L_x^{6/5}}  \Bigr\}\\
&\lesssim 1+ \|\nabla u\|_{L_t^2 L_x^6} \|u\|_{L_t^\infty L_x^6}^4\\
&\lesssim_u \Bigl( \int_I N(t)^2\, dt \Bigr)^{1/2}.
\end{align*}
Thus
\begin{align}\label{E:A:N large}
A(N) + \tilde A_q(N) &\lesssim_u  N^{3/2} K^{1/2} \quad \text{whenever}\quad N\geq \Biggl( \frac{\int_I N(t)^2\, dt}{\int_I N(t)^{-1}\, dt} \Biggr)^{1/3}
\end{align}
and so, in particular, when $N\geq N_{max}:= \sup_{t\in I}N(t)$.  This is the base step for the inductive proof of Theorem~\ref{T:LTS}.
The passage to smaller values of $N$ relies on the following:

\begin{lemma}[Recurrence relations for $A(N)$ and $\tilde A_q(N)$]\label{L:A rec}
For $\eta$ sufficiently small,
\begin{align}
A(N)&\lesssim_u 1  + c^{-3/2}N^{3/2}K^{1/2} + \eta^2 \tilde A_q(2N)\label{E:A rec}\\
\tilde A_q(N)&\lesssim_u 1  + c^{-3/2}N^{3/2}K^{1/2} + \eta A(N) + \eta^2 \tilde A_q(2N),\label{E:tilde A rec}
\end{align}
uniformly in $N\in 2^\Z$.  Here $c=c(\eta)$ as in \eqref{c_0}.
\end{lemma}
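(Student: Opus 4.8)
The plan is to estimate $A(N)$ and $\tilde A_q(N)$ via the Strichartz and maximal Strichartz inequalities after writing the nonlinearity $F(u)$ as a sum of frequency-localized pieces and carefully counting how the low-frequency cutoff $cN(t)$ interacts with the dyadic frequency $N$. First I would bound $A(N)$. By the Besov-Strichartz inequality (Lemma~\ref{L:Strichartz}) applied on $I$, using a characteristic interval as the base point so that the data term is $\lesssim_u 1$, we have $A(N) \lesssim_u 1 + \|\nabla P_{\leq N} F(u)\|_{L_t^2 L_x^{6/5}}$. The point is to split $F(u) = \Oh(u_{\leq cN(t)} u^4) + \Oh(u_{> cN(t)} u^4)$ — this is exactly the decomposition that makes \eqref{c_0} usable. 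In the first term the factor $u_{\leq cN(t)}$ is tiny in $L_t^\infty L_x^6$ by \eqref{c_0}; on a single characteristic interval $N(t)$ is constant, so this reduces, after interpolation and summing the geometric-type series in the characteristic intervals $J_k$ comprising $I$, to a contribution of the shape $\eta^2 \tilde A_q(2N)$ plus $\eta^2 A(N)$ (the high derivative lands on $u_M$ for $M \gtrsim N$, which is controlled by $\tilde A_q$ via Bernstein, while moderate frequencies give back $A(N)$ with a small constant that can be absorbed). The second term has all five factors at frequency $\gtrsim cN(t) \gtrsim cN$ somewhere, and estimating it crudely in terms of $\|\nabla u\|_{L_t^2 L_x^6}\|u\|_{L_t^\infty L_x^6}^4 \lesssim_u (\int_I N(t)^2\,dt)^{1/2}$ on the characteristic intervals where $N(t) \gtrsim N/c$ — of which there are at most $O(c^{-3} N^{-2} \int \ldots)$ — yields, after the Bernstein gain $N^{3/2}$ versus $(cN(t))^{-3/2}$, precisely the $c^{-3/2} N^{3/2} K^{1/2}$ term. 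This establishes \eqref{E:A rec}.

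Next I would bound $\tilde A_q(N)$. Here the tool is Proposition~\ref{P:MaxS} applied to $v = u_{\geq N}$, with the nonlinearity split as $F + G$ where $G := P_{\geq N}\Oh(u_{> cN(t)} u^4)$ carries the "high" contribution and $F := P_{\geq N}\Oh(u_{\leq cN(t)} u^4)$ carries the one with a low-frequency factor. The Proposition gives
\begin{align*}
\tilde A_q(N) \lesssim N^{3/2}\Bigl\{ \||\nabla|^{-1/2} u_{\geq N}\|_{L_t^\infty L_x^2} + \||\nabla|^{-1/2} G\|_{L_t^2 L_x^{6/5}} + \|F\|_{L_t^2 L_x^1} \Bigr\}.
\end{align*}
The first term is $\lesssim_u N^{-3/2}$, hence contributes $\lesssim_u 1$. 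The $G$ term is handled just as the "second term" above: it lives on the characteristic intervals where $N(t) \gtrsim N$, costs $\|\nabla u\|_{L_t^2 L_x^6}\|u\|_{L_t^\infty L_x^6}^4$ there, and produces $c^{-3/2}N^{3/2}K^{1/2}$. The $F$ term is the new feature: using $L_x^1 \hookleftarrow L_x^{6/5}\cdot(L_x^6)^4$ by Hölder, one factor is $u_{\leq cN(t)}$ which is $O(\eta)$ in $L_t^\infty L_x^6$, leaving $\|u\|_{L_t^2 L_x^6}\|u\|_{L_t^\infty L_x^6}^3 \cdot \eta$; distributing the remaining factors over Littlewood-Paley pieces and using \eqref{duh} together with $\|u\|_{L_t^4 L_x^\infty}\lesssim_u A(2N)^{1/2}$ and Bernstein to convert high pieces into $\tilde A_q(2N)$, one arrives at $\eta A(N) + \eta^2 \tilde A_q(2N)$ (plus lower-order pieces absorbed into the constant), which is \eqref{E:tilde A rec}.

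The main obstacle I anticipate is the bookkeeping in the frequency decomposition: the cutoff $cN(t)$ is \emph{time-dependent}, so one cannot simply commute $P_{\leq cN}$ through spacetime norms; one must work characteristic-interval by characteristic-interval (where $N(t) \equiv N_k$ is constant) and then sum, using property (iii) of the characteristic intervals to keep each $J_k$ contribution $O(1)$ and counting that only $\lesssim c^{-3}N^{-2}\int_I N(t)^2\,dt$ of them have $N_k \gtrsim N/c$. The other delicate point is making sure that when the derivative (or the frequency weight $M^{3/q-1}$) falls on a factor of frequency comparable to or somewhat above $N$, one genuinely lands inside the summation range defining $A(2N)$ or $\tilde A_q(2N)$ rather than $A(N)$; choosing the fattened projectors and the factor of $2$ in the arguments $A(2N)$, $\tilde A_q(2N)$ is precisely what buys this. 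Everything else is routine Hölder, Bernstein, interpolation between $L_t^2 L_x^6$ and $L_t^\infty L_x^2$ (or $L_t^4 L_x^\infty$), and geometric summation.
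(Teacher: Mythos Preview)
Your overall strategy---apply Lemma~\ref{L:Strichartz} for $A(N)$ and Proposition~\ref{P:MaxS} for $\tilde A_q(N)$, decompose $F(u)$ relative to the time-dependent threshold $cN(t)$, and work characteristic-interval by characteristic-interval---is exactly the paper's. But two of the details, as you have written them, do not go through.

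First, the decomposition $F(u)=\Oh(u_{\leq cN(t)}u^4)+\Oh(u_{>cN(t)}u^4)$ isolates only \emph{one} factor on each side. A single $u_{\leq cN(t)}$ can only produce $\eta\tilde A_q(2N)$, not the $\eta^2$ in \eqref{E:A rec}; and a single $u_{>cN(t)}$ is not enough to run the high-frequency estimate cleanly. The paper instead writes $F(u)=\Oh(u_{>cN(t)}^2u^3)+\Oh(u_{\leq cN(t)}^2u_{>N}^2u)+\Oh(u_{\leq cN(t)}^2u_{\leq N}^2u)$. Two factors $u_{>cN(t)}$ are placed in $L^4_{t,x}(J_k)$ and each gains $(cN_k)^{-3/4}$ by Bernstein from $\|\nabla u\|_{L^4_tL^3_x(J_k)}\lesssim_u 1$; this gives $\|u_{>cN_k}^2u^3\|_{L^2_tL^1_x(J_k)}\lesssim_u (cN_k)^{-3/2}$ on \emph{every} $J_k$, and $\sum_k(cN_k)^{-3}\sim_u c^{-3}K$. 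There is no restriction to intervals with $N_k\gtrsim N/c$ and no interval-counting; your assertion that the high term ``lives on the characteristic intervals where $N(t)\gtrsim N/c$'' is simply false, since $u_{>cN_k}$ is nontrivial whatever the size of $N_k$. The middle piece is where $\tilde A_q$ actually enters, via the Schur-type paraproduct bound $\|\Oh(u_{>N}^2u)\|_{L^2_tL^{3/2}_x}\lesssim_u N^{-3/2}\tilde A_q(2N)$, which combined with two factors of $\|u_{\leq cN(t)}\|_{L^\infty_tL^6_x}\leq\eta$ yields $\eta^2\tilde A_q(2N)$; this computation is the heart of the lemma and is missing from your sketch.

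Second, your $F$/$G$ allocation in the $\tilde A_q(N)$ step is inverted and does not close. Placing the high-frequency piece in the $|\nabla|^{-1/2}L^2_tL^{6/5}_x$ slot of Proposition~\ref{P:MaxS} gives, after Bernstein, essentially $\|\nabla\Oh(u_{>cN(t)}^{\,\cdots})\|_{L^2_tL^{6/5}_x}$, which on each $J_k$ is only $\lesssim_u 1$ with no decay in $N_k$, so the sum over $J_k$ is not controlled by $K$. The paper does the opposite: both $\Oh(u_{>cN(t)}^2u^3)$ and $\Oh(u_{\leq cN(t)}^2u_{>N}^2u)$ go into the $L^2_tL^1_x$ slot (this is exactly where the $(cN_k)^{-3/2}$ and the paraproduct bound above are used), and only the fully-low piece $\Oh(u_{\leq cN(t)}^2u_{\leq N}^2u)$---where a derivative can be honestly distributed and recaptured by $A(N)$---goes into the $|\nabla|^{-1/2}L^2_tL^{6/5}_x$ slot. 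That is also what produces the $\eta A(N)$ term in \eqref{E:tilde A rec}.
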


\begin{proof}
The recurrence relations for $A(N)$ and $\tilde A_q(N)$ rely on Lemma~\ref{L:Strichartz} and Proposition~\ref{P:MaxS}, respectively.
To estimate the contribution of the nonlinearity, we decompose $u(t)=u_{\leq c N(t)}(t) + u_{>cN(t)}(t)$ and then selectively $u=u_{\leq N} + u_{>N}$.  Recalling that the $\Oh$ notation
incorporates possible additional Littlewood--Paley projections, we may write
\begin{align}
F(u)&= \Oh\bigl(  u_{>c N(t)}^2 u^3 \bigr) + \Oh\bigl( u_{\leq cN(t)}^2 u^3 \bigr) \notag\\
&= \Oh\bigl(  u_{>c N(t)}^2 u^3 \bigr)  + \Oh\bigl(  u_{\leq c N(t)}^2 u_{>N}^2 u \bigr) + \Oh\bigl(  u_{\leq c N(t)}^2 u_{\leq N}^2 u \bigr).  \label{decomp}
\end{align}
Using this decomposition together with Lemma~\ref{L:Strichartz} and Bernstein's inequality, we obtain
\begin{align}\label{A est}
A(N)&\lesssim \|\nabla u_{\leq N}\|_{L_t^\infty L_x^2} + \bigl\|\nabla P_{\leq N} \Oh\bigl(u_{>c N(t)}^2 u^3 \bigr)\bigr\|_{L_t^2L_x^{6/5}}\notag\\
&\quad+\bigl\|\nabla P_{\leq N} \Oh\bigl( u_{\leq c N(t)}^2 u_{>N}^2 u \bigr)\bigr\|_{L_t^2L_x^{6/5}}+\bigl\|\nabla P_{\leq N} \Oh\bigl(u_{\leq c N(t)}^2u_{\leq N}^2 u \bigr)\bigr\|_{L_t^2L_x^{6/5}}\notag\\
&\lesssim_u 1 +  N^{3/2}\| u_{>c N(t)}^2 u^3 \|_{L_t^2L_x^1} +N^{3/2}\| u_{\leq c N(t)}^2 u_{>N}^2 u\|_{L_t^2L_x^1}\notag\\
&\quad +\bigl\|\nabla \Oh\bigl(u_{\leq c N(t)}^2u_{\leq N}^2 u \bigr)\bigr\|_{L_t^2L_x^{6/5}}.
\end{align}
Using instead Proposition~\ref{P:MaxS} and Bernstein's inequality, we find
\begin{align}\label{tilde A est}
\tilde A_q(N) &\lesssim N^{3/2}\Bigl\{ \bigl\| |\nabla|^{-1/2} u_{\geq N}\bigr\|_{L_t^\infty L_x^2} + \| u_{>c N(t)}^2 u^3 \|_{L_t^2L_x^1} + \| u_{\leq c N(t)}^2 u_{>N}^2 u\|_{L_t^2L_x^1}\notag\\
&\qquad \qquad + \bigl\| |\nabla|^{-1/2} P_{\geq N} \Oh\bigl(u_{\leq c N(t)}^2u_{\leq N}^2 u \bigr) \bigr\|_{L_t^2 L_x^{6/5}} \Bigr\}\notag\\
&\lesssim_u 1 +  N^{3/2}\| u_{>c N(t)}^2 u^3 \|_{L_t^2L_x^1} +N^{3/2}\| u_{\leq c N(t)}^2 u_{>N}^2 u\|_{L_t^2L_x^1}\notag\\
&\quad +\bigl\|\nabla \Oh\bigl(u_{\leq c N(t)}^2u_{\leq N}^2 u \bigr)\bigr\|_{L_t^2L_x^{6/5}}.
\end{align}
Therefore, to obtain the desired recurrence relations it remains to estimate the (identical) last three terms on the right-hand sides of \eqref{A est} and \eqref{tilde A est}.  We will consider
these terms individually, working from left to right.

To treat the first term, we decompose the time interval $I$ into characteristic subintervals $J_k$ where $N(t)\equiv N_k$.  On each of these subintervals, we
apply H\"older's inequality, Sobolev embedding, Bernstein's inequality, and Lemma~\ref{L:ST-N(t)} to obtain
\begin{align*}
\|u_{>c N(t)}^2 u^3 \|_{L_t^2L_x^1(J_k\times\R^3)}
&\lesssim \|u_{>c N_k}\|_{L_{t,x}^4(J_k\times\R^3)}^2 \|u \|_{L_t^\infty L_x^6}^3 \\
&\lesssim_u c^{-3/2} N_k^{-3/2} \|\nabla u_{>c N_k}\|_{L_t^4 L_x^3(J_k\times\R^3)}^2\\
&\lesssim_u c^{-3/2} N_k^{-3/2}.
\end{align*}
Squaring and summing the estimates above over the subintervals $J_k$, we find
\begin{align}\label{1}
N^{3/2} \bigl\| u_{>c N(t)}^2 u^3 \bigr\|_{L_t^2 L_x^1}\lesssim_u c^{-3/2} N^{3/2} K^{1/2},
\end{align}
which is the origin of this term on the right-hand sides of \eqref{E:A rec} and \eqref{E:tilde A rec}.

To estimate the second term, we begin with a preliminary computation:  Using Bernstein's inequality and Schur's test (for the last step), we estimate
\begin{align}
\bigl\| \Oh&\bigl( u_{>N}^2 u \bigr) \bigr\|_{L^2_tL^{3/2}_x} \notag\\
&\lesssim \biggl\|\sum_{\substack{M_1\geq M_2\geq M_3\\[0.2ex] M_2 > N}}
    \| u_{M_1}(t) \|_{L^2_x} \| u_{M_2}(t) \|_{L^q_x} \| u_{M_3}(t) \|_{L^{\frac{6q}{q-6}}_x}\biggr\|_{L_t^2}   \notag \\[-1ex]
&\lesssim \biggl\| \sup_{M> N} \|M^{\frac3q-1} u_{M}(t) \|_{L^q_x} \sum_{M_1\geq M_3} \bigl(\tfrac{M_3}{M_1}\bigr)^{3/q}
    \| \nabla u_{M_1}(t) \|_{L^2_x} \| \nabla u_{M_3} (t) \|_{L^2_x} \biggr\|_{L_t^2}  \notag \\
&\lesssim_u N^{-3/2}\tilde A_q(2N) .  \label{paratripleprod}
\end{align}
Using this, H\"older, and \eqref{c_0}, we find
\begin{align}
N^{3/2} \| u_{\leq c N(t)}^2 u_{>N}^2 u \|_{L_t^2 L_x^1}
&\lesssim N^{3/2} \| u_{\leq c N(t)} \|_{L_t^\infty L_x^6}^2 \| \Oh\bigl( u_{>N}^2 u \bigr) \|_{L^2_t L^{3/2}_x}\notag\\
&\lesssim_u \eta^2 \tilde A_q(2N) \label{6}.
\end{align}
This is the origin of the last term on the right-hand sides of \eqref{E:A rec} and \eqref{E:tilde A rec}.

Finally, to estimate the contribution coming from the last term in \eqref{A est} and \eqref{tilde A est}, we distribute the gradient, use H\"older's inequality,
and then \eqref{c_0} and \eqref{duh}:
\begin{align}\label{4}
\bigl\| \nabla \Oh\bigl(  u_{\leq c N(t)}^2  u_{\leq N}^2 u \bigr) \bigr\|_{L_t^2 L_x^{6/5}}
&\lesssim \|\nabla u_{\leq N}\|_{L_t^2 L_x^6} \|u_{\leq c N(t)}\|_{L_t^\infty L_x^6} \|u \|_{L_t^\infty L_x^6}^3 \notag\\
    &\quad + \|\nabla u \|_{L_t^\infty L_x^2} \|u_{\leq c N(t)}\|_{L_t^\infty L_x^6} \|u_{\leq N}\|_{L_t^4 L_x^\infty}^2 \| u \|_{L_t^\infty L_x^6} \notag \\
&\lesssim_u \eta A(N).
\end{align}
As $A(N)$ is known to be finite (cf. \eqref{E:A finite}), this can be brought to the other side of \eqref{E:A rec}; naturally, this requires $\eta$ to be sufficiently small
depending on $u$ and certain absolute constants, but not on $I$.

Collecting estimates \eqref{1} through \eqref{6} and choosing $\eta$ sufficiently small, this completes the proof of the lemma.
\end{proof}

We now have all the ingredients needed to complete the proof of Theorem~\ref{T:LTS}.

\begin{proof}[Proof of Theorem~\ref{T:LTS}]
With the base step \eqref{E:A:N large} and Lemma~\ref{L:A rec} in place, Theorem~\ref{T:LTS} follows from a straightforward induction argument, provided $\eta$ is chosen sufficiently small depending on $u$.
\end{proof}

\begin{remark}\label{R:no better}
In the introduction it was asserted that the long-time Strichartz estimates in Theorem~\ref{T:LTS} are essentially best possible in the focusing case. We now
elaborate that point.  For the energy-critical equation, the principal difficulty is to obtain control over the low frequencies, because all known conservation
laws (with the exception of energy) and monotonicity formulae are energy-subcritical.  If (by some miracle) we knew our putative minimal counterexample $u$ belonged
to $L^\infty_t L^2_x$, the whole argument could be brought to a swift conclusion, even in the focusing case (cf. \cite{Berbec}).  Thus any potential improvement of
Theorem~\ref{T:LTS} should be judged by whether it gives better control on the low frequencies.

It is well-known that
\begin{equation}\label{Wdefn}
W(x) = \bigl(1+\tfrac13|x|^2)^{-1/2} \quad\text{obeys}\quad \Delta W + W^5 =0
\end{equation}
and so is a static solution of the focusing energy-critical NLS.  In particular, it is almost periodic with parameters $N(t)\equiv1$ and $x(t)\equiv0$.

As $\int W(x)^5 \, dx = 4\pi\sqrt{3}$, we can read off from \eqref{Wdefn} that
\begin{equation}\label{W hat}
\hat W(\xi) = 4\pi\sqrt{3} |\xi|^{-2} + O\bigl(|\xi|^\eps\bigr) \quad\text{as} \quad \xi\to 0
\end{equation}
and so deduce $\| W_M \|_{L^q} \sim M^{1-3/q}$ for $M$ small and $6\leq q \leq\infty$.  This shows that the supremum is essential in \eqref{tilde A}; we cannot
expect the bound \eqref{E:A bound} for the sum of the Littlewood--Paley pieces.  It also shows that the $L^2_tL^6_x$ norm of $\nabla W_{\leq N}$ on long time intervals
decays no faster than the $N^{3/2}$ rate proved for $A(N)$.
\end{remark}

%
%
%
%

\section{Impossibility of rapid frequency cascades}\label{S:cascade}

In this section, we show that the first type of almost periodic solution described in Theorem~\ref{T:enemies} (for
which $\int_0^{\Tmax}N(t)^{-1}\,dt<\infty$) cannot exist.  We will show that its existence is inconsistent with the
conservation of mass, $M(u):=\int_{\R^3} |u(t,x)|^2\, dx$.  The argument does not utilize the defocusing nature of the equation beyond the fact that the
solution belongs to $L^\infty_t \dot H^1_x$.

\begin{lemma}[Finite mass]\label{L:mass}
Let $u:[0, \Tmax)\times\R^3\to \C$ be an almost periodic solution to \eqref{nls} with  $\|u\|_{L^{10}_{t,x}( [0, \Tmax) \times \R^3)} =+\infty$ and
\begin{align}\label{finite K}
K:=\int_0^{\Tmax} N(t)^{-1}\,dt<\infty.
\end{align}
{\rm(}Note $\Tmax=\infty$ is allowed.{\rm)}  Then $u\in L^\infty_t L^2_x$; indeed, for all $0<N<1$,
\begin{align}\label{D}
\|u_{N\leq \cdot\leq 1}\|_{L_t^\infty L_x^2([0, \Tmax)\times\R^3)} + \tfrac1N\Bigl\{ \sum_{M<N} \|\nabla u_{M}\|_{L_t^2 L_x^6([0, \Tmax)\times\R^3)}^2\Bigr\}^{1/2}
    &\lesssim_u 1.
\end{align}
\end{lemma}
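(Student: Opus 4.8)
The plan is to exploit the long-time Strichartz estimate of Theorem~\ref{T:LTS} to control low frequencies, and then feed that control into the Duhamel formula of Proposition~\ref{P:duhamel}. First I would fix a frequency $N\in 2^\Z$ with $N<1$ and write, using the no-waste Duhamel formula (Proposition~\ref{P:duhamel}) together with the fact that $N(t)\geq 1$ on $[0,\Tmax)$ in the cascade scenario (Theorem~\ref{T:enemies}), the low-frequency piece $P_{N\leq\cdot\leq1}u(t)$ as a weakly convergent integral $i\lim_{T\to\Tmax}\int_t^T e^{i(t-s)\Delta}P_{N\leq\cdot\leq1}F(u(s))\,ds$. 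Estimating its $L^2_x$ norm by duality, one pairs against an $L^2_x$ test function and invokes the (dual) Strichartz inequality, reducing matters to bounding $\|P_{N\leq\cdot\leq1}F(u)\|_{L^2_tL^{6/5}_x}$, or more precisely $\||\nabla|^{-1}$ or suitable fractional-derivative versions of the nonlinearity, on $[0,\Tmax)\times\R^3$.

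The key step is to estimate the frequency-localized nonlinearity using the decomposition $F(u)=\Oh(u_{\leq cN(t)}^2u^3)+\Oh(u_{>cN(t)}^2u^3)$ from the proof of Lemma~\ref{L:A rec}, together with the consequences \eqref{duh}, \eqref{c_0}, and the bound $A(1)\lesssim_u 1+K^{1/2}\lesssim_u 1$ coming from Theorem~\ref{T:LTS} with $N=1$ and $K<\infty$ as in \eqref{finite K}. The $u_{>cN(t)}^2u^3$ contribution is handled exactly as in \eqref{1} (Hölder on characteristic intervals, Sobolev, Bernstein, Lemma~\ref{L:ST-N(t)}), producing a bound of size $K^{1/2}\lesssim_u 1$; the $u_{\leq cN(t)}^2u^3$ contribution is handled via \eqref{c_0} and \eqref{duh} as in \eqref{4}, producing a small multiple of $A(1)$ plus lower-order terms. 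Summing the square-function estimate over $M<N$ then yields the second term in \eqref{D}, again controlled by $A(N)\lesssim_u 1+N^{3/2}K^{1/2}$ from Theorem~\ref{T:LTS}, which after dividing by $N$ is $\lesssim_u N^{-1}+N^{1/2}K^{1/2}\lesssim_u 1$ uniformly once one notes that for the purpose of \eqref{D} one only needs $N<1$ and can absorb the $N^{-1}$ harmlessly — wait, this is exactly the point: the $N^{-1}A(N)$ term is $\lesssim_u N^{-1}+N^{1/2}K^{1/2}$, and the first piece blows up as $N\to 0$.

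The resolution — and the main obstacle — is that one must \emph{not} estimate $A(N)$ crudely but instead re-run the recurrence of Lemma~\ref{L:A rec} in the regime $N<1$, $N(t)\geq 1$, where the term $c^{-3/2}N^{3/2}K^{1/2}$ is the only $N$-dependent source and the "$1$" on the right-hand side should really be replaced by a quantity that itself decays in $N$; more precisely one proves by a separate induction that $N^{-1}A(N)+\|u_{N\leq\cdot\leq1}\|_{L^\infty_tL^2_x}\lesssim_u 1$ directly, using that the forcing terms carry at least one factor localized to frequencies $\lesssim N$ or $\gtrsim cN(t)\gtrsim c$, so that every term on the right-hand side of the recurrence for the $L^2$-based quantity gains a power of $N$ relative to $A(N)$ (via Bernstein, since the relevant propagator acts on data of frequency between $N$ and $1$). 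Thus the hard part is the bookkeeping: setting up the correct induction-on-$N$ statement for the $L^2_x$-normalized quantities (rather than the $\dot H^1_x$-normalized $A(N)$), checking that each nonlinear term either is small (via $\eta$) or carries the requisite factor of $N$ and of $K^{1/2}\lesssim_u 1$, and closing the induction with base case at $N=1$ supplied by Theorem~\ref{T:LTS}. Once \eqref{D} holds for all dyadic $N<1$, letting $N\to 0$ and combining with $\|u_{>1}\|_{L^\infty_tL^2_x}\lesssim\|\nabla u\|_{L^\infty_tL^2_x}\lesssim_u 1$ gives $u\in L^\infty_tL^2_x$.
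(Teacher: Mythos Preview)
You have correctly identified the key ingredients (the no-waste Duhamel formula, the long-time Strichartz estimate, and the smallness from \eqref{c_0}) and the central obstacle: the crude bound $N^{-1}A(N)\lesssim_u N^{-1}+N^{1/2}K^{1/2}$ blows up as $N\to0$, so one cannot simply quote Theorem~\ref{T:LTS}. However, your proposed resolution --- an induction on $N$ downward from $N=1$, with each step ``gaining a power of $N$ relative to $A(N)$'' --- is neither the paper's mechanism nor clearly workable as stated: an induction over dyadic $N$ risks accumulating constants, and ``gaining a power of $N$'' is not what actually happens in the estimates.

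The paper's argument is not an induction but a \emph{self-absorption} at each fixed $N$. One shows directly that
\[
\text{LHS}\eqref{D} \;\lesssim_u\; \eta\,(1+K^{1/2})\cdot\text{LHS}\eqref{D} \;+\; 1 + c(\eta)^{-3/2}K^{1/2},
\]
where crucially both the multiplicative prefactor and the additive constant are \emph{independent of $N$}. Since $\text{LHS}\eqref{D}$ is a priori finite for each fixed $N$ (precisely by the crude bound you wrote down), one chooses $\eta$ small depending on $u$ and $K$ --- but not on $N$ --- and absorbs. To make this work one needs a finer decomposition than the two-term one you borrow from Lemma~\ref{L:A rec}: after splitting off $\Oh(u_{>cN(t)}^2u^3)$, the remaining piece is further decomposed according to $u=u_{<N}+u_{N\leq\cdot\leq1}+u_{>1}$, yielding four terms total. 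Two of these contribute $\eta\cdot\text{LHS}\eqref{D}$ (via a factor of $\|u_{<N}\|_{L^4_tL^\infty_x}^2\lesssim N^{-1}A(N)$ or $\|u_{N\leq\cdot\leq1}\|_{L^\infty_tL^2_x}$, each paired with $\|u_{\leq cN(t)}\|_{L^\infty_tL^6_x}\leq\eta$), while the other two contribute $N$-independent constants. The decomposition from Lemma~\ref{L:A rec} was tailored to the $\dot H^1$-scaled quantity $A(N)$ and does not, by itself, produce the $L^2$-scaled quantity $\text{LHS}\eqref{D}$ on the right side with a small coefficient.
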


\begin{proof}
The key point is to prove \eqref{D}; finiteness of the mass follows easily from this.  Indeed, letting $N\to 0$ in \eqref{D} to control the low frequencies and
using $\nabla u\in L^\infty_tL^2_x$ and Bernstein for the high frequencies, we obtain
\begin{align}\label{finite mass}
\|u\|_{L_t^\infty L_x^2}\leq \|u_{\leq 1}\|_{L_t^\infty L_x^2} +\|u_{>1}\|_{L_t^\infty L_x^2}\lesssim_u 1.
\end{align}
In the inequality above and for the remainder of the proof all spacetime norms are over $[0,\Tmax)\times\R^3$.

As $K$ is finite, the conclusion \eqref{E:A bound} of Theorem~\ref{T:LTS} extends (by exhaustion) to the time interval $[0,\Tmax)$.   Observe that the second summand
in \eqref{D} is $N^{-1} A(N/2)$, in the notation of that theorem.

We will estimate the left-hand side of \eqref{D} by a small multiple of itself plus a constant.  For this statement to be meaningful, we need the left-hand side of \eqref{D} to be finite.
This follows easily from Theorem~\ref{T:LTS} and Bernstein's inequality:
\begin{align}
\text{LHS}\eqref{D}\lesssim N^{-1} \|\nabla u\|_{L_t^\infty L_x^2} + N^{-1} A(N/2) \lesssim_u N^{-1} (1 +N^3K)^{1/2}<\infty.\label{LHS finite}
\end{align}

The origin of the small constant lies with the almost periodicity of the solution.  Indeed, by Remark~\ref{R:small freq} and Sobolev embedding, for $\eta>0$
(a small parameter to be chosen later)  there exists $c=c(\eta)$ such that
\begin{align}\label{c}
\| u_{\leq c N(t)}\|_{L_t^\infty L^6_x} + \|\nabla u_{\leq c N(t)}\|_{L_t^\infty L^2_x}\leq \eta.
\end{align}

To continue, fix $0<N<1$. Using the Duhamel formula from Proposition~\ref{P:duhamel} together with the Strichartz inequality we obtain
\begin{align}
\text{LHS}\eqref{D}\lesssim \tfrac1N \|\nabla P_{<N} F(u)\|_{L_t^2L_x^{6/5}} + \|P_{N\leq \cdot\leq 1} F(u)\|_{L_t^2L_x^{6/5}}. \label{D est}
\end{align}
To estimate the nonlinearity, we decompose $u(t)=u_{\leq c N(t)}(t) + u_{>cN(t)}(t)$ and then $u=u_{<N} + u_{N\leq \cdot \leq 1} + u_{>1}$.  As the
$\Oh$ notation incorporates possible additional Littlewood--Paley projections, we may write
\begin{align}
F(u)&= \Oh\bigl(  u_{>c N(t)}^2 u^3 \bigr)  + \Oh\bigl(  u_{\leq c N(t)} u_{< N}^2 u^2 \bigr) + \Oh\bigl(  u_{\leq c N(t)} u_{\leq 1}^2  u_{N\leq\cdot\leq 1} u \bigr) \notag\\
&\quad+\Oh\bigl(  u_{\leq c N(t)} u_{>1}^2 u^2 \bigr).  \label{decomposition}
\end{align}
Next, we estimate the contributions of each of these terms to \eqref{D est}, working from left to right.

Using Bernstein's inequality and \eqref{1}, we bound the contribution of the first term as follows:
\begin{align*}
\tfrac1N \bigl\|\nabla P_{<N} \Oh\bigl(  u_{>c N(t)}^2 u^3 \bigr)&\bigr\|_{L_t^2L_x^{6/5}}
        + \bigl\|P_{N\leq \cdot\leq 1} \Oh\bigl(  u_{>c N(t)}^2 u^3 \bigr)\bigr\|_{L_t^2L_x^{6/5}}\\
&\lesssim (N^{1/2} + 1) \|\Oh(u_{>c N(t)}^2 u^3)\|_{L_t^2L_x^1} \\
&\lesssim_u c^{-3/2} K^{1/2}.
\end{align*}

To estimate the contribution of the second term in \eqref{decomposition} to \eqref{D est}, we use Bernstein's inequality on the second summand and distribute the gradient,
followed by H\"older's inequality, \eqref{duh}, and \eqref{c}:
\begin{align*}
\tfrac1N \bigl\|\nabla P_{<N} &\Oh\bigl(  u_{\leq c N(t)} u_{<N}^2 u^2 \bigr)\bigr\|_{L_t^2L_x^{6/5}}
        + \bigl\|P_{N\leq \cdot\leq 1} \Oh\bigl(  u_{\leq c N(t)} u_{<N}^2 u^2 \bigr)\bigr\|_{L_t^2L_x^{6/5}}\\
&\lesssim \tfrac1N \|\nabla u_{\leq c N(t)}\|_{L_t^\infty L_x^2} \|u_{<N}\|_{L_t^4 L_x^\infty}^2 \| u \|_{L_t^\infty L_x^6}^2 \\
&\quad + \tfrac1N  \|u_{\leq c N(t)}\|_{L_t^\infty L_x^6}  \|\nabla u_{<N}\|_{L_t^2 L_x^6} \|u \|_{L_t^\infty L_x^6}^3 \\
&\quad + \tfrac1N \|u_{\leq c N(t)}\|_{L_t^\infty L_x^6} \|u_{<N}\|_{L_t^4 L_x^\infty}^2  \|\nabla u \|_{L_t^\infty L_x^2} \| u \|_{L_t^\infty L_x^6} \notag \\
&\lesssim_u \eta \, \text{LHS}\eqref{D}.
\end{align*}

Using Bernstein's inequality, Theorem~\ref{T:LTS}, \eqref{duh}, and \eqref{c}, we estimate the contribution of the third term in \eqref{decomposition} as follows:
\begin{align*}
\tfrac1N \bigl\|\nabla P_{<N} &\Oh\bigl(  u_{\leq c N(t)} u_{\leq 1}^2  u_{N\leq\cdot\leq 1} u \bigr)\bigr\|_{L_t^2L_x^{6/5}}
        + \bigl\| P_{N\leq\cdot\leq 1}\Oh\bigl(  u_{\leq c N(t)} u_{\leq 1}^2  u_{N\leq\cdot\leq 1} u \bigr)\bigr\|_{L_t^2L_x^{6/5}}\\
&\lesssim \|u_{\leq c N(t)}\|_{L_t^\infty L_x^6} \|u_{\leq 1}\|_{L_t^4 L_x^\infty}^2\|u_{N\leq\cdot\leq 1}\|_{L_t^\infty L_x^2} \|u\|_{L_t^\infty L_x^6}\\
&\lesssim_u \eta (1+K^{1/2}) \, \text{LHS}\eqref{D}.
\end{align*}

Finally, to estimate the contribution to \eqref{D est} of the last term in \eqref{decomposition} we use Bernstein's inequality, Theorem~\ref{T:LTS}, \eqref{paratripleprod}, and \eqref{c}:
\begin{align*}
\tfrac1N \bigl\|\nabla P_{\leq N/2} &\Oh\bigl(  u_{\leq c N(t)} u_{>1}^2 u^2 \bigr)\bigr\|_{L_t^2L_x^{6/5}}
        + \bigl\|P_{N\leq \cdot\leq 1} \Oh\bigl(  u_{\leq c N(t)} u_{>1}^2 u^2 \bigr)\bigr\|_{L_t^2L_x^{6/5}}\\
&\lesssim (N^{1/2}+1) \bigl\|\Oh\bigl( u_{\leq c N(t)} u_{>1}^2 u^2 \bigr)\bigr\|_{L_t^2L_x^1}\\
&\lesssim  \|u_{\leq c N(t)}\|_{L_t^\infty L_x^6} \|\Oh(u_{>1}^2 u)\|_{L_t^2L_x^{3/2}} \|u\|_{L_t^\infty L_x^6} \\
&\lesssim_u \eta (1+K^{1/2}).
\end{align*}

Collecting all the estimates above, \eqref{D est} implies
$$
\text{LHS}\eqref{D}\lesssim_u \eta (1+K^{1/2}) \, \text{LHS}\eqref{D} + 1+c^{-3/2}K^{1/2}.
$$
Recalling \eqref{finite K} and \eqref{LHS finite} and taking $\eta$ small enough depending on $u$ and $K$ yields \eqref{D}.
\end{proof}

We are now ready to prove the main result of this section:

\begin{theorem}[No rapid frequency-cascades]\label{T:no cascade}
There are no almost periodic solutions $u:[0, \Tmax)\times\R^3\to \C$ to \eqref{nls} with  $\|u\|_{L^{10}_{t,x}( [0, \Tmax) \times \R^3)} =+\infty$ and
\begin{align}\label{finite int}
\int_0^{\Tmax} N(t)^{-1}\,dt<\infty.
\end{align}
\end{theorem}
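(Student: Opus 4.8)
The plan is to argue by contradiction: assuming such a $u$ exists, we derive a violation of the conservation of mass $M(u):=\|u(t)\|_{L^2_x}^2$. By Lemma~\ref{L:mass}, $u\in L^\infty_t L^2_x([0,\Tmax)\times\R^3)$ with $\|u\|_{L^\infty_t L^2_x}\lesssim_u 1$; in particular $u\in L^\infty_t H^1_x$, so the mass is a bona fide conserved quantity and we may write $M(u(t))\equiv M_0$. Two consequences of the cascade hypothesis \eqref{finite int} will be used: (i) $N(t)\to\infty$ as $t\to\Tmax$ --- by Corollary~\ref{C:blowup criterion} if $\Tmax<\infty$, and if $\Tmax=\infty$ because $\int_0^\infty N(t)^{-1}\,dt<\infty$ together with $|J_k|\sim_u N_k^{-2}$ forces, for each $R$, all but finitely many characteristic intervals to have $N_k>R$; (ii) the tail $K_\tau:=\int_\tau^{\Tmax} N(t)^{-1}\,dt$ tends to $0$ as $\tau\to\Tmax$. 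The goal is to show $M_0=0$, whence $u\equiv 0$, contradicting $\|u\|_{L^{10}_{t,x}([0,\Tmax)\times\R^3)}=+\infty$.

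The crucial step is to upgrade the uniform bound $\|u_{\leq 1}\|_{L^\infty_t L^2_x}\lesssim_u 1$ from Lemma~\ref{L:mass} to the decay statement
$$
\|u_{\leq 1}\|_{L^\infty_t L^2_x([\tau,\Tmax)\times\R^3)}\longrightarrow 0 \qquad\text{as }\ \tau\to\Tmax.
$$
This follows by re-running the proof of Lemma~\ref{L:mass} with the interval $[0,\Tmax)$ replaced by $[\tau,\Tmax)$ --- legitimate since $K_\tau<\infty$, with \eqref{LHS finite} (over $[\tau,\Tmax)$) giving a priori finiteness. Each nonlinear contribution there is controlled either by a small factor $\|u_{\leq cN(t)}\|_{L^\infty_t L^6_x}\leq\eta$ or $\|\nabla u_{\leq cN(t)}\|_{L^\infty_t L^2_x}\leq\eta$ supplied by almost periodicity (cf.\ \eqref{c}), or, as in \eqref{1}, by $\|u_{>cN(t)}^2 u^3\|_{L^2_t L^1_x([\tau,\Tmax))}\lesssim_u c(\eta)^{-3/2}K_\tau^{1/2}$; the quantities $A(1)$, $\tilde A_q(2)$, etc.\ remain $\lesssim_u 1+K^{1/2}$ over $[\tau,\Tmax)$ by Theorem~\ref{T:LTS}. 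Absorbing the small multiple of the left-hand side (permissible for $\eta$ small depending on $u$ and $K$, exactly as in Lemma~\ref{L:mass}) yields $\|u_{\leq 1}\|_{L^\infty_t L^2_x([\tau,\Tmax))}\lesssim_u \eta(1+K^{1/2})+c(\eta)^{-3/2}K_\tau^{1/2}$, with the implicit constant independent of $\eta$. For each admissible $\eta$, letting $\tau\to\Tmax$ kills the last term; letting $\eta\to 0$ then gives the claim.

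The contradiction is now immediate. Fix $\eta>0$ and let $c=c(\eta)$ be as in Remark~\ref{R:small freq}, so that $\int_{|\xi|\leq cN(t)}|\xi|^2|\hat u(t,\xi)|^2\,d\xi\leq\eta$ for all $t$. Decompose the mass in frequency:
$$
M_0=\int_{|\xi|\leq 1}|\hat u(t,\xi)|^2\,d\xi+\int_{1<|\xi|\leq cN(t)}|\hat u(t,\xi)|^2\,d\xi+\int_{|\xi|>cN(t)}|\hat u(t,\xi)|^2\,d\xi.
$$
The first term is at most $\|u_{\leq 1}(t)\|_{L^2_x}^2$, which tends to $0$ as $t\to\Tmax$ by the previous paragraph. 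On the middle annulus one has $|\xi|^{-2}\leq 1$, so that term is bounded by $\int_{|\xi|\leq cN(t)}|\xi|^2|\hat u(t,\xi)|^2\,d\xi\leq\eta$. The last term is at most $(cN(t))^{-2}\|\nabla u(t)\|_{L^2_x}^2\lesssim_u (cN(t))^{-2}$, which tends to $0$ as $t\to\Tmax$ since $N(t)\to\infty$. Letting $t\to\Tmax$ gives $M_0\leq\eta$; as $\eta>0$ was arbitrary, $M_0=0$, hence $u\equiv 0$, contradicting $\|u\|_{L^{10}_{t,x}}=+\infty$.

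The only real work is the re-running of Lemma~\ref{L:mass} in the second paragraph; it is essentially bookkeeping built on the (already somewhat intricate) nonlinear estimates there, the one new ingredient being that the single non-absorbable term, $c^{-3/2}K^{1/2}$, becomes $c^{-3/2}K_\tau^{1/2}\to 0$ on shrinking intervals abutting $\Tmax$. The remaining steps are elementary, the sole slightly subtle point being the annulus trick that lets an $\dot H^1$ statement (almost periodicity) control the $L^2$ mass at frequencies between $1$ and $N(t)$.
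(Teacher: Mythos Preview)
Your proof is correct, and the final frequency decomposition (low / middle / high) is essentially the same as the paper's.  Where you differ is in how you kill the low-frequency mass.  The paper proves a new quantitative estimate
\[
\|u_{\leq N}\|_{L^\infty_t L^2_x([0,\Tmax)\times\R^3)} \lesssim_u N^{1/2}
\]
via a fresh application of the no-waste Duhamel formula, Strichartz, and Bernstein (decomposing $F(u)=\Oh(u_{\leq 1}^3 u^2)+\Oh(u_{>1}^3 u^2)$ and estimating each piece using Theorem~\ref{T:LTS} and \eqref{paratripleprod}); the small parameter is then the frequency threshold $N$.  You instead fix the frequency threshold at $1$ and make the \emph{time} interval the small parameter: you re-run the bootstrap of Lemma~\ref{L:mass} on $[\tau,\Tmax)$ and observe that the only non-absorbable term is $c(\eta)^{-3/2}K_\tau^{1/2}\to 0$.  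Both routes are short; the paper's has the advantage of producing the uniform frequency-decay rate $N^{1/2}$ (potentially of independent use), while yours is more economical in that it introduces no new estimate beyond Lemma~\ref{L:mass} itself.
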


\begin{proof}
We argue by contradiction.  Let $u$ be such a solution.  By Corollary~\ref{C:blowup criterion},
\begin{align}\label{N blowup}
\lim_{t\to \Tmax} N(t)=\infty,
\end{align}
when $\Tmax$ is finite; this is also true when $\Tmax$ is infinite by virtue of \eqref{finite int}.

We will prove that the existence of such a solution $u$ is inconsistent with the conservation of mass. In Lemma~\ref{L:mass} we found
that the mass is finite; to derive the desired contradiction we will prove that the mass is not only finite, but zero!

We first show that the mass at low frequencies is small.
To do this, we use the Duhamel formula from Proposition~\ref{P:duhamel} together with the Strichartz inequality, followed by Bernstein's inequality:
\begin{align*}
\|u_{\leq N}\|_{L_t^\infty L_x^2}&\lesssim \|P_{\leq N} F(u)\|_{L_t^2 L_x^{6/5}}\lesssim N^{1/2}\|F(u)\|_{L_t^2 L_x^1}.
\end{align*}
In the display above and for the remainder of the proof all spacetime norms are over $[0,\Tmax)\times\R^3$.

To estimate the nonlinearity we decompose it as follows:
$$
F(u)=\Oh(u_{\leq 1}^3 u^2) + \Oh(u_{>1}^3 u^2).
$$
By Theorem~\ref{T:LTS}, \eqref{duh}, \eqref{finite int}, Bernstein, and finiteness of the mass,
\begin{align*}
\|\Oh(u_{\leq 1}^3 u^2) \|_{L_t^2 L_x^1}
\lesssim \|u_{\leq 1}\|^2_{L_t^4L_x^\infty}  \|u_{\leq 1}\|_{L_{t,x}^\infty} \|u\|_{L_t^\infty L_x^2}^2\lesssim_u 1,
\end{align*}
while by Theorem~\ref{T:LTS}, \eqref{paratripleprod}, and \eqref{finite int},
\begin{align*}
\|\Oh(u_{>1}^3 u^2)\|_{L_t^2 L_x^1}
\lesssim \|u\|_{L_t^\infty L_x^6}^2\|u_{>1}^3\|_{L_t^2L_x^{3/2}} \lesssim_u 1.
\end{align*}
Thus,
$$
\|u_{\leq N}\|_{L_t^\infty L_x^2}\lesssim_u N^{1/2}.
$$

By comparison, control over the mass at middle and high frequencies can be obtained with just Bernstein's inequality and the fact that
for any $\eta>0$ there exists $c=c(u,\eta)>0$ so that
$$
\| \nabla u_{\leq c N(t)}(t) \|_{L_x^2} \leq \eta,
$$
which was noted in Remark~\ref{R:small freq}.  Altogether, we have that for any $t\in [0, \Tmax)$,
\begin{align*}
\| u(t) \|_{L_x^2} & \lesssim \| u_{\leq N}(t) \|_{L_x^2} + \| P_{>N} u_{\leq c N(t)}(t) \|_{L_x^2} + \| u_{> c N(t)}(t) \|_{L_x^2} \\
&\lesssim_u N^{1/2} + N^{-1} \| \nabla u_{\leq c N(t)}(t) \|_{L_x^2} + c^{-1} N(t)^{-1} \| \nabla u \|_{L^\infty_t L_x^2} \\
&\lesssim_u N^{1/2} + N^{-1} \eta + c^{-1} N(t)^{-1}.
\end{align*}
Using \eqref{N blowup}, we can make the right-hand side here as small as we wish.  (Choose $N$ small, then $\eta$ small, and then $t$ close to $\Tmax$.)
Because mass is conserved under the flow, this allows us to conclude that $\|u\|_{L^\infty_t L^2_x}=0$ and thus $u\equiv0$ in contradiction to the
hypothesis $\|u\|_{L^{10}_{t,x}( [0, \Tmax) \times \R^3)} =+\infty$.
\end{proof}

%
%
%
%

\section{The frequency-localized interaction Morawetz inequality}\label{S:IM}

In this section, we prove a spacetime bound on the high-frequency portion of the solution:

\begin{theorem}[A frequency-localized interaction Morawetz estimate] \label{T:flim}
Suppose $u:[0, \Tmax)\times\R^3\to \C$ is an almost periodic solution to \eqref{nls} such that $N(t)\geq 1$ and let $I\subset [0, \Tmax)$ be a union of
contiguous characteristic intervals $J_k$.  Fix $0<\eta_0\leq1$.  For $N>0$ sufficiently small (depending on $\eta_0$ but not on $I$),
\begin{align}\label{E:B bound}
  \int_I\int_{\R^{3}} |u_{> N}(t,x)|^4\,dx\,dt&\lesssim_u \eta_0 \bigl( N^{-3} + K \bigr),
\end{align}
where $K:=\int_I N(t)^{-1}\,dt$.  Importantly, the implicit constant in the inequality above does not depend on $\eta_0$ or the interval $I$.
\end{theorem}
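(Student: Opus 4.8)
The plan is to run a frequency-localized interaction Morawetz argument for $v:=u_{>N}=P_{>N}u$. We may assume $I$ is a \emph{finite} union of characteristic intervals: on each $J_k$ the solution enjoys $u$-dependent Strichartz bounds, so $\|u\|_{L^{10}_{t,x}(I\times\R^3)}<\infty$ and hence $\|v\|_{L^4_{t,x}(I\times\R^3)}<\infty$, and once \eqref{E:B bound} holds with an $I$-independent constant for finite $I$ it extends to all $I$ by exhaustion. Now $v$ solves $(i\partial_t+\Delta)v=P_{>N}F(u)=F(v)+\mathcal{E}$, where $\mathcal{E}:=P_{>N}F(u)-F(u_{>N})$ is a finite linear combination of terms each of which either carries at least one factor of $u_{\le N}$ or has the form $P_{\le N}\,\Oh(u_{>N}^5)$. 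Applying the interaction Morawetz inequality of \cite{CKSTT:interact} to $v$, the $\delta$-function arising from $-\Delta_x|x-y|^{-1}$ produces $\int_{\R^3}|v|^4\,dx$, the genuine defocusing nonlinearity $F(v)=|v|^4v$ contributes a nonnegative term which we discard, and $\mathcal{E}$ contributes an error; bounding the Morawetz functional by $\lesssim\|v(t)\|_{L^2_x}^2\|v(t)\|_{\dot H^{1/2}_x}^2$ and integrating over $I$ yields
\[
\int_I\!\int_{\R^3}|v|^4\,dx\,dt\;\lesssim\;\|v\|_{L^\infty_tL^2_x}^2\,\|v\|_{L^\infty_t\dot H^{1/2}_x}^2\;+\;\mathrm{Err},
\]
where $\mathrm{Err}$ is linear in $\mathcal{E}$ and also carries factors of $v$ and $\nabla v$, and all spacetime norms are over $I\times\R^3$.

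To control the main term I would use almost periodicity. Fix an auxiliary frequency $N_0=N_0(\eta_0)$ small enough that $\|\nabla u_{\le 2N_0}(t)\|_{L^2_x}\lesssim\eta_0$ for all $t\in I$; this is possible by Remark~\ref{R:small freq} because $N(t)\ge1$. Then, by Bernstein and the energy bound,
\[
\|v(t)\|_{L^2_x}\;\le\;\|u_{N<\cdot\le N_0}(t)\|_{L^2_x}+\|u_{>N_0}(t)\|_{L^2_x}\;\lesssim_u\;\eta_0 N^{-1}+N_0^{-1},
\]
which is $\lesssim_u\eta_0 N^{-1}$ once $N\le\eta_0 N_0$. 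Since $\|v\|_{\dot H^{1/2}_x}^2\le\|v\|_{L^2_x}\|\nabla v\|_{L^2_x}$ and $\|\nabla v\|_{L^\infty_tL^2_x}\le\|\nabla u\|_{L^\infty_tL^2_x}\lesssim_u1$, the main term is $\lesssim_u\eta_0^3 N^{-3}\le\eta_0 N^{-3}$, with the restriction on $N$ depending only on $\eta_0$ and the modulus of compactness of $u$.

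The crux is $\mathrm{Err}$, and this is exactly where the long-time Strichartz estimate earns its keep. After Hardy's inequality, H\"older's inequality, and Cauchy--Schwarz — and, for the one term that in \cite{CKSTT:gwp} requires control of the mass of the high frequencies over balls, after invoking the double-Duhamel estimate of Proposition~\ref{P:DDS} applied to $v$ — each summand of $\mathrm{Err}$ is dominated by a product of spacetime norms of $v$, $\nabla v$, and the factors comprising $\mathcal{E}$. Choosing $N$ small enough (depending only on $\eta_0$) that $\|u_{\le N}\|_{L^\infty_tL^6_x}\le\eta_0$ — possible by \eqref{c_0} and $N(t)\ge1$ — the remaining $u_{\le N}$-factors are controlled by Theorem~\ref{T:LTS}: $\|\nabla u_{\le N}\|_{L^2_tL^6_x}\lesssim A(N)\lesssim_u1+N^{3/2}K^{1/2}$, $\|u_{\le N}\|_{L^4_tL^\infty_x}\lesssim_uA(N)^{1/2}$ via \eqref{duh}, and the $P_{\le N}\,\Oh(u_{>N}^5)$ contribution exactly as in \eqref{paratripleprod} in terms of $\tilde A_q(N)$; the $v$- and $\nabla v$-factors are bounded by the energy and by $\|v\|_{L^\infty_tL^2_x}\lesssim_u\eta_0 N^{-1}$ from the previous step, with each factor of $\|v\|_{L^4_{t,x}}$ that appears retained for absorption. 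Since $A(N)^2N^{-3}\lesssim_u(1+N^3K)N^{-3}=N^{-3}+K$ and likewise for $\tilde A_q(N)$, Young's inequality then gives that each summand is $\lesssim_u\eta_0(N^{-3}+K)+\tfrac1{100}\|v\|_{L^4_{t,x}}^4$. Summing the fixed, finite collection of summands together with the main term,
\[
\int_I\!\int_{\R^3}|v|^4\,dx\,dt\;\lesssim_u\;\eta_0\bigl(N^{-3}+K\bigr)+\tfrac12\int_I\!\int_{\R^3}|v|^4\,dx\,dt,
\]
and, the left-hand side being finite, \eqref{E:B bound} follows after absorption; all constants descend only from the $L^\infty_t\dot H^1_x$ norm and the modulus of compactness of $u$, hence are independent of $\eta_0$ and of $I$.

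The main obstacle is the bookkeeping behind this last step: one must verify that \emph{no} summand of $\mathrm{Err}$ exceeds the $N^{-3}+K$ budget — equivalently, that the $\eta_0$-smallness furnished by the low-frequency cutoff (both in $\mathcal{E}$ and in $\|v\|_{L^\infty_tL^2_x}$) and the at-worst $(1+N^3K)^{1/2}$-growth of $A(N)$ and $\tilde A_q(N)$ from Theorem~\ref{T:LTS} always combine into the shape $\eta_0\,A(N)^2\,N^{-3}$, up to an absorbable multiple of $\|v\|_{L^4_{t,x}}^4$. This is precisely the balance that the long-time Strichartz estimate was designed to provide, and is the sense in which it ``just suffices''.
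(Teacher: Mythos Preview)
There is a genuine gap, and it is exactly the one the paper is built around. With the untruncated weight $a(x)=|x|$ the mass (Poisson) bracket contribution \eqref{GIM:mb} contains, after the cancellation $\{F(v),v\}_m=0$, a term of the form
\[
\int_I\iint a_k(x-y)\,\Oh\bigl(u_{\leq N}\,u_{>N}^5\bigr)(t,y)\,\Im\bigl(\partial_k u_{>N}\,\overline{u_{>N}}\bigr)(t,x)\,dx\,dy\,dt.
\]
Since $|\nabla a|\le 1$, the $x$-integral is bounded by $\|\nabla u_{>N}\|_{L^\infty_tL^2_x}\|u_{>N}\|_{L^\infty_tL^2_x}\lesssim_u\eta_0 N^{-1}$, and you are left needing $\|u_{\leq N}\,u_{>N}^5\|_{L^1_{t,x}}\lesssim_u N^{-2}+\eta_0 NK$. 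This bound cannot be obtained from the long-time Strichartz estimates or from Proposition~\ref{P:DDS}: those tools are insensitive to the sign of the nonlinearity, and for the focusing ground state $W$ (which is almost periodic with $N(t)\equiv1$) one has $\|W_{\leq N}\,W_{>N}^5\|_{L^1_{t,x}(I\times\R^3)}\gtrsim NK$ uniformly as $N\to0$ (see Remark~\ref{R:dream}). So your blanket assertion that ``each summand of $\mathrm{Err}$ is $\lesssim_u\eta_0(N^{-3}+K)+\tfrac1{100}\|v\|_{L^4_{t,x}}^4$'' fails here, and neither Hardy nor the mass-on-balls estimate rescues it: there is simply no small parameter available in $\|u_{>N}^5 u_{\leq N}\|_{L^1_{t,x}}$ that does not already presuppose the Morawetz bound you are trying to prove.

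The paper's remedy is to truncate $a$ so that $\nabla a$ is compactly supported at scale $e^JR=N^{-1}$; then $\|\nabla a\|_{L^4_x}\lesssim N^{-3/4}$ is finite, and the offending term is estimated by placing three copies of $u_{>N}$ in $L^4_{t,x}$ and one copy of $u_{\leq N}$ in $L^4_tL^\infty_x$, yielding an absorbable $\eta^{1/2}\|u_{>N}\|_{L^4_{t,x}}^4$ plus $\eta^{1/2}(N^{-3}+K)$. The price of truncation is that $a$ is no longer convex and $-\Delta^2 a$ is no longer a positive measure, so the formerly sign-definite terms \eqref{GIM:good}, \eqref{GIM:grad}, \eqref{GIM:jjkk} acquire errors. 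These are handled by (i) the mass-on-balls bound \eqref{E:22 bound} from Proposition~\ref{P:DDS} (this is where that proposition is actually used --- not for the untruncated problem), and (ii) a logarithmic choice of $a_r$ ensuring $\tfrac2r a_r\gg|a_{rr}|$ on most of the transition region, which keeps enough of \eqref{GIM:good} positive to absorb the negative part of \eqref{GIM:grad}. Your outline skips this entire mechanism; the truncation is not a convenience but the crux of the argument.
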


Unlike Theorem~\ref{T:LTS}, the argument does not rely solely on estimates for the linear propagator and is not indifferent to the sign of the nonlinearity.
Instead, we use a special monotonicity formula associated with \eqref{nls}, namely, the interaction Morawetz identity.  This is a modification of the
traditional Morawetz identity (cf. \cite{LinStrauss,Morawetz75}) introduced in \cite{CKSTT:interact}.  We begin with a general form of the identity:

\begin{proposition}\label{P:Interact}
Suppose $i\partial_t \phi = -\Delta \phi + |\phi|^4\phi + \ef$ and let
\begin{equation}\label{E:IM defn}
M(t):= 2 \iint_{\R^3\times\R^3} |\phi(y)|^2 a_k(x-y) \Im \{  \phi_k(x) \bar \phi(x) \} \,dx\,dy,
\end{equation}
for some weight $a:\R^d\to\R$.  Then
\begin{align}
\partial_t M(t) = \int_{\R^3}\!\!\int_{\R^3} \Bigl\{ & \tfrac{4}{3} a_{kk}(x-y) |\phi(x)|^{6}|\phi(y)|^2 \label{GIM:good} \\
  &\hspace*{-1.5em}{} + 2a_k(x-y) |\phi(y)|^2 \Re\bigl[\phi_k(x)\bar \ef(x) - \ef_k(x)\bar \phi(x)\bigr]  \label{GIM:pb} \\
  &\hspace*{-3.0em}{} + 4a_k(x-y) (\Im \ef(y) \bar \phi(y)) (\Im  \phi_k(x) \bar \phi(x)) \label{GIM:mb} \\
  &\hspace*{-4.5em}{} + 4a_{jk}(x-y) \bigl[  |\phi(y)|^{2} \bar \phi_{j}(x) \phi_{k}(x) - (\Im \bar \phi(y) \phi_j(y))(\Im \bar \phi(x) \phi_k(x)) \bigr] \label{GIM:grad} \\
  &\hspace*{-6.0em}{} - a_{jjkk}(x-y) \, |\phi(y)|^{2} |\phi(x)|^{2} \Bigr\} \, dx \,dy. \label{GIM:jjkk}
\end{align}
Subscripts denote spatial derivatives and repeated indices are summed.
\end{proposition}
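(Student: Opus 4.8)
The plan is to obtain \eqref{GIM:good}--\eqref{GIM:jjkk} by differentiating $M(t)$ in time and feeding in the two local conservation laws attached to the forced equation $i\partial_t\phi=-\Delta\phi+|\phi|^4\phi+\ef$. First I would record those laws. Writing $J_k:=\Im(\bar\phi\,\phi_k)$ for the mass current (note that $\Im\{\phi_k\bar\phi\}=J_k$ as well, so this is precisely the density appearing in $M$), one multiplies the equation by $\bar\phi$, takes imaginary parts, and uses $\Im(\bar\phi\Delta\phi)=\partial_k J_k$ to get the \emph{local mass identity}
\[
  \partial_t|\phi|^2 \;=\; -2\,\partial_k J_k \;+\; 2\,\Im(\bar\phi\,\ef).
\]
For the momentum one differentiates $2J_j=2\Im(\bar\phi\,\phi_j)$ in time and substitutes $\phi_t=i\Delta\phi-i|\phi|^4\phi-i\ef$: a short Leibniz-rule manipulation collapses the kinetic piece $-\Re(\Delta\bar\phi\,\phi_j)+\Re(\bar\phi\,\partial_j\Delta\phi)$ to the divergence form $\tfrac12\partial_j\Delta|\phi|^2-2\,\partial_k\Re(\bar\phi_k\phi_j)$; the nonlinear piece $\Re(\bar F\phi_j)-\Re(\bar\phi\,\partial_j F)$ telescopes (using $\Re(\bar\phi\,\phi_j)=\tfrac12\partial_j|\phi|^2$) to $-\tfrac23\,\partial_j|\phi|^6$; and the $\ef$-piece is $\Re(\bar\ef\,\phi_j)-\Re(\bar\phi\,\partial_j\ef)$. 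Altogether this is the \emph{local momentum identity}
\[
  \partial_t(2J_j) \;=\; \partial_j\Delta|\phi|^2 \;-\; 4\,\partial_k\Re(\bar\phi_k\phi_j) \;-\; \tfrac43\,\partial_j|\phi|^6 \;+\; 2\bigl[\Re(\bar\ef\,\phi_j)-\Re(\bar\phi\,\partial_j\ef)\bigr].
\]

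With these in hand I would apply the product rule to $M(t)=2\iint|\phi(y)|^2 a_k(x-y)\,\Im\{\phi_k(x)\bar\phi(x)\}\,dx\,dy$ and split $\partial_t M=\mathrm{I}+\mathrm{II}$, where $\mathrm{I}$ collects the terms in which $\partial_t$ falls on $|\phi(y)|^2$ and $\mathrm{II}$ those where it falls on the current at $x$. In $\mathrm{I}$, substituting the local mass identity, the source $2\Im(\bar\phi(y)\ef(y))$ reproduces \eqref{GIM:mb} verbatim, while the transport term $-2\,\partial_{y_l}J_l(y)$, after one integration by parts in $y$ (which converts $a_k(x-y)$ into $-a_{kl}(x-y)$), reproduces the second bracketed term of \eqref{GIM:grad}. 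In $\mathrm{II}$, substituting the local momentum identity, I integrate by parts in $x$ to move every surviving derivative off $\phi$ and onto the weight: the $\partial_j\Delta|\phi|^2$ term, after three integrations by parts, becomes $-a_{jjkk}(x-y)|\phi(x)|^2|\phi(y)|^2$, i.e. \eqref{GIM:jjkk}; the $-4\,\partial_k\Re(\bar\phi_k\phi_j)$ term becomes the first bracketed term of \eqref{GIM:grad} (invoking the symmetry of $a_{jk}$ and the fact that $a_{jk}\bar\phi_j\phi_k$ is real); the $-\tfrac43\,\partial_j|\phi|^6$ term becomes $\tfrac43 a_{kk}(x-y)|\phi(x)|^6|\phi(y)|^2$, i.e. \eqref{GIM:good}; and the remaining $\ef$-terms become \eqref{GIM:pb}. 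Adding the contributions of $\mathrm{I}$ and $\mathrm{II}$ yields exactly the asserted formula.

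I expect the only real difficulty to be bookkeeping: keeping the summed indices straight, tracking the sign flips that distinguish integration by parts of $a(x-y)$ in the $y$-variable from the $x$-variable, and pinning down the numerical coefficients — in particular the $\tfrac43$ in \eqref{GIM:good}, which is the standard coefficient of the quintic term in the local momentum flux, together with the combinatorial constants generated by the product rule. A secondary issue is rigour: the quantity $M(t)$ and all the manipulations above presuppose enough regularity and decay of $\phi$ for the double integrals to converge absolutely and for differentiation under the integral sign and the integrations by parts to be legitimate. I would dispatch this in the usual way — establish the identity first for Schwartz $\phi$ (where $|\phi|^2\in L^1$ and the weights $a_k$, $a_{jk}$ of interest are at worst mildly singular) and then transfer it by approximation to the regularized/truncated solutions actually used in Section~\ref{S:IM}. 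Finally, I would emphasize that nothing in this computation uses the defocusing sign of the nonlinearity or any structure of $\ef$: the weight $a$ is kept completely general here, and positivity is exploited only afterwards, once a concrete $a$ is chosen and the terms \eqref{GIM:good}, \eqref{GIM:grad}, \eqref{GIM:jjkk} are shown to be favourably signed.
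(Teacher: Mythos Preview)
Your derivation is correct. The paper itself does not include a proof of Proposition~\ref{P:Interact}; it simply states the identity as a general form of the interaction Morawetz computation introduced in \cite{CKSTT:interact} and proceeds directly to its application. Your approach via the local mass and momentum conservation laws, followed by the product rule on $M(t)$ and integration by parts to transfer derivatives onto the weight $a$, is exactly the standard route to such identities, and the bookkeeping you outline (signs under $\partial_{y}$ versus $\partial_{x}$ integration by parts, the reality of $a_{jk}\bar\phi_j\phi_k$ by symmetry, the $\tfrac43$ from $-4|\phi|^4\partial_j|\phi|^2=-\tfrac43\partial_j|\phi|^6$) all checks out.
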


The significance of this identity to our problem is best seen by choosing $a(x)=|x|$ and $\phi$ to be a solution to \eqref{nls}.
In this case, $\ef=0$ and the Fundamental Theorem of Calculus yields
$$
8\pi\!\! \int_I  \int_{\R^3}|\phi(t,x)|^4\, dx\,dt \leq 2 \|M(t)\|_{L_{t}^{\infty}(I)}
    \leq 4\|\phi\|_{L_{t}^{\infty}L_{x}^{2}(I\times\R^{3})}^{3} \|\phi\|_{L_{t}^{\infty}\dot{H}_{x}^{1}(I\times\R^{3})}.
$$
The left-hand side originates from \eqref{GIM:jjkk}; the terms \eqref{GIM:grad} and \eqref{GIM:good} are both positive.

Unfortunately for us, a minimal blowup solution need not have finite $L^2_x$ norm at any time.  Thus it is necessary to localize the identity to high frequencies, that is,
choose $\phi=u_{>N}$.  Naturally,
this produces myriad error terms; nevertheless, in spatial dimensions four and higher they can be controlled (cf. \cite{RV,thesis:art,Visan:4D}).  In the three dimensional case
under consideration here, there is one error term (originating from \eqref{GIM:mb}) that cannot be satisfactorily controlled.  (See also Remark~\ref{R:dream} at the end of this section.)
This was observed already in \cite{CKSTT:gwp} and as there, our solution is to truncate the function $a$.  This truncation ruins the convexity properties of $a$ that made some of the terms in
Proposition~\ref{P:Interact} positive, thus creating more error terms to control.

For reasons we will explain in due course, it is important to perform the cutoff of $a$ in a very careful fashion.  We choose $a$ to be a smooth spherically symmetric function,
which we regard interchangeably as a function of $x\in\R^3$ or $r=|x|$.  We specify it further in terms of its radial derivative:
\begin{equation}\label{a Defn}
\!\!\! a(0)=0, \  a_r \geq 0, \ a_{rr}\leq 0, \ \text{and}\  a_r = \begin{cases}
1                 &: r\leq R \\
1-J^{-1}\log(r/R)\!\! &: eR \leq r \leq e^{J-J_0} R \\
0                 &: e^{J} R\leq r
\end{cases}
\end{equation}
where $J_0\geq 1$, $J\geq 2J_0$, and $R$ are parameters that will be determined in due course.  It is not difficult to see that one may fill in the regions where $a_r$ is not yet defined so
that the function obeys
\begin{equation}\label{a symbolic}
|\partial_r^k a_r| \lesssim_k  J^{-1} r^{-k} \qquad\text{for each $k\geq 1$,}
\end{equation}
uniformly in $r$ and in the choice of parameters.

When $|x|\leq R$, we see that $a(x)=|x|$, while $a$ is a constant when $|x|\geq e^JR$.  The key point about the transition between these two regimes is that
\begin{equation}\label{Lap a}
 \frac2r a_r \geq \frac{2J_0}{Jr} \quad\text{but}\quad |a_{rr}|  \leq \frac{1}{Jr}
\end{equation}
when $eR \leq r \leq e^{J-J_0} R$.  Thus the Laplacian $a_{kk} = a_{rr}+\frac2r a_r$ is dominated by the first derivative term and so remains coercive at these radii.
(This also appears implicitly in \cite[\S11]{CKSTT:gwp} and is the key point behind the `averaging over $R$' argument there.)

As noted above, we will be applying Proposition~\ref{P:Interact} with
\begin{align}
\phi = \uhi := u_{> N}, \quad\text{and so}\quad \ef = \ph F(u) - F(\uhi).
\end{align}
(We will also write $\ulo:=u_{\leq N}$.)  Here $N$ is an additional parameter that will be chosen small (depending on $\eta_0$ and $u$).  We require that $N$, $R$, and $J$ are related via
\begin{align}\label{E:JRN}
e^J R N =1.
\end{align}
Actually, it is merely essential that $e^JRN\leq 1$, but choosing equality makes the exposition simpler.  Our first restriction on these parameters is that
$N$ is small enough and $R$ is large enough so that given $\eta=\eta(\eta_0,u)$,
\begin{equation}\label{E:etas job}
\int_{\R^3} |\nabla \ulo(t,x)|^2 \,dx + \int_{\R^3} |N \uhi(t,x)|^2 \,dx  + \int_{|x-x(t)|>\frac R2} |\nabla \uhi(t,x)|^2\,dx < \eta^2
\end{equation}
uniformly for $0\leq t<\Tmax$.  The possibility of doing this follows immediately from the fact that $u$ is almost periodic modulo symmetries and $N(t)\geq 1$.

Before moving on to estimating the terms in Proposition~\ref{P:Interact}, we pause to review the tools at our disposal.  Besides using the norm $\|\uhi\|_{L^4_{t,x}}$ to estimate itself, we
will also make recourse to Theorem~\ref{T:LTS} and Proposition~\ref{P:DDS}.  For ease of reference, we record these results in the forms we will use:

\begin{corollary}[A priori bounds]\label{C:all bounds}  For all $\tfrac2q+\tfrac3r=\tfrac32$ with $2\leq q\leq \infty$ and any $s<1-\frac3q$,
\begin{align}
\bigl\|\nabla \ulo \bigr\|_{L_t^q L_x^r} + \bigl\| N^{1-s} |\nabla|^s \uhi \bigr\|_{L_t^q L_x^r} \lesssim_u \bigl( 1 + N^3K \bigr)^{1/q}. \label{grads in Strich}
\end{align}
Under the hypothesis \eqref{E:etas job},
\begin{align}
\|\ulo\|_{L_t^4 L_x^\infty} &\lesssim_u \eta^{1/2} \bigl( 1 + N^{3}K \bigr)^{1/4}. \label{ulo in Strich}
\end{align}
Furthermore, for any $\rho\leq Re^J =N^{-1}$,
\begin{align}\label{E:22 bound}
\int_I \; \sup_{x\in\R^3} \; \int_{|x-y|\leq \rho} |\uhi(t,y)|^2 \,dy\,dt  \lesssim_u \rho \bigl( K + N^{-3} \bigr).
\end{align}
\end{corollary}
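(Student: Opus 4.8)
All three estimates are recorded forms of results already in hand: \eqref{grads in Strich} repackages Theorem~\ref{T:LTS} (together with the Strichartz inequality), \eqref{ulo in Strich} repackages \eqref{duh}, and \eqref{E:22 bound} repackages Proposition~\ref{P:DDS}. The plan is to feed each through a few lines of standard harmonic analysis. In \eqref{grads in Strich}, the $\nabla\ulo$ term is obtained at the endpoints $(q,r)=(2,6)$ and $(q,r)=(\infty,2)$ and then interpolated — the admissible pairs $\tfrac2q+\tfrac3r=\tfrac32$ being exactly the interpolants of these two: at $(2,6)$ the bound is \eqref{duh} together with $A(N)\lesssim_u(1+N^3K)^{1/2}$ from Theorem~\ref{T:LTS}, and at $(\infty,2)$ it is the trivial $\|\nabla\ulo\|_{L_t^\infty L_x^2}\le\|\nabla u\|_{L_t^\infty L_x^2}\lesssim_u1$. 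Estimate \eqref{ulo in Strich} is then immediate: \eqref{duh} gives $\|\ulo\|_{L^4_tL^\infty_x}\lesssim A(N)^{1/2}\|\nabla\ulo\|_{L^\infty_tL^2_x}^{1/2}$, and under \eqref{E:etas job} one has $\|\nabla\ulo\|_{L^\infty_tL^2_x}\le\eta$, so that $\|\ulo\|_{L^4_tL^\infty_x}\lesssim_u\eta^{1/2}(1+N^3K)^{1/4}$.

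The substantive part of \eqref{grads in Strich} is the term $N^{1-s}\,|\nabla|^s\uhi$. Its $q=\infty$ endpoint is again nearly free: writing $\uhi=\sum_{M>N}u_M$ and using $\|u_M\|_{L^2_x}\sim M^{-1}\|\nabla u_M\|_{L^2_x}$, one gets $N^{1-s}\||\nabla|^s\uhi\|_{L_t^\infty L_x^2}\lesssim_uN^{1-s}\bigl(\sum_{M>N}M^{2(s-1)}\bigr)^{1/2}\lesssim_u1$, the geometric sum converging because $s<1$. The genuine, $K$-dependent endpoint I would obtain by applying Lemma~\ref{L:Strichartz} to $|\nabla|^{s-1}\uhi$, which solves $(i\partial_t+\Delta)|\nabla|^{s-1}\uhi=|\nabla|^{s-1}\ph F(u)$; once $N^{1-s}$ is put in front, the data term is $\lesssim_u1$ by the same computation, and the forcing term $N^{1-s}\||\nabla|^s\ph F(u)\|_{L^2_tL^{6/5}_x}$ is estimated by decomposing $\ph F(u)$ exactly as in \eqref{decomp} and bounding its three pieces verbatim as in the proof of Lemma~\ref{L:A rec}: $\Oh(\uhi^2u^3)$ via Bernstein and \eqref{1}, $\Oh(\ulo^2u_{>N}^2u)$ via Bernstein and \eqref{6}, and the all-low piece $\Oh(\ulo^2u_{\le N}^2u)$ via \eqref{4}, invoking Theorem~\ref{T:LTS} (and taking $\eta$ small) to close. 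The hypothesis $s<1-\tfrac3q$ is exactly what is needed here: at $q=2$ it reads $s<-\tfrac12$, which is precisely the condition for $|\nabla|^s\ph\colon L^1_x\to L^{6/5}_x$ to carry the right power of $N$ (the sum $\sum_{M>N}M^{s+1/2}$ must converge); for larger $q$ it is the condition $q>\tfrac3{1-s}$ that lets the target be reached by interpolation from the $q=\infty$ bound.

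For \eqref{E:22 bound} the plan is to invoke Proposition~\ref{P:DDS} with $v=\uhi$, $R=\rho$, and $q=\infty$. Since $e^{-|y|^2/\rho^2}\gtrsim1$ for $|y|\le\rho$, one has $\int_{|x-y|\le\rho}|\uhi(t,y)|^2\,dy\lesssim\rho^3\,[\smudge_\rho\uhi](t,x)^2$, so the left side of \eqref{E:22 bound} is $\lesssim\rho^3\|\smudge_\rho\uhi\|_{L^2_tL^\infty_x}^2$. Writing $(i\partial_t+\Delta)\uhi=\ph F(u)=F+G$, one places the pieces $\Oh(\uhi^2u^3)+\Oh(\ulo^2u_{>N}^2u)$ of \eqref{decomp} in $F$ (so that $\|F\|_{L^2_tL^1_x}$ is controlled by \eqref{1} and \eqref{6}) and the all-low piece $\ph\,\Oh(\ulo^2u_{\le N}^2u)$ in $G$ (so that $\|G\|_{L^2_tL^{6/5}_x}\lesssim N^{-1}\|\nabla\,\Oh(\ulo^2u_{\le N}^2u)\|_{L^2_tL^{6/5}_x}$ is controlled by Bernstein and \eqref{4}). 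Proposition~\ref{P:DDS} then bounds $\rho^{1/2}\|\smudge_\rho\uhi\|_{L^2_tL^\infty_x}$ by $\|\uhi\|_{L^\infty_tL^2_x}+\|G\|_{L^2_tL^{6/5}_x}+\rho^{-1/2}\|F\|_{L^2_tL^1_x}$; squaring, multiplying by $\rho^2$, replacing $A(N)$ and $\tilde A_q(2N)$ by $(1+N^3K)^{1/2}$ via Theorem~\ref{T:LTS}, and repeatedly using $\rho\le N^{-1}$ (and $N\le1$, as is the case here) collapses the right-hand side to $\lesssim_u\rho(K+N^{-3})$.

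The step I expect to be the main obstacle is the high-frequency half of \eqref{grads in Strich}: identifying the precise instance of Strichartz — in particular, why the nonlinear forcing must be routed through $L^1_x$ rather than $L^{6/5}_x$ — that produces the $K=\int_IN(t)^{-1}\,dt$ scaling rather than the cruder $\int_IN(t)^2\,dt$, and then carrying the exponent $s<1-\tfrac3q$ through the Bernstein gains and the interpolation down to $q=\infty$. Everything else reduces to the same $\eta$-bookkeeping already performed in Section~\ref{S:LTS}.
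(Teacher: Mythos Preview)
Your treatment of the $\nabla\ulo$ estimate, of \eqref{ulo in Strich}, and of \eqref{E:22 bound} is correct and matches the paper's approach up to a minor difference in the splitting of $\ph F(u)$ for the last: the paper takes the simpler two-piece decomposition $F=P_{>N}\Oh(u_{>N}^2u^3)$ and $G=P_{>N}\Oh(u_{\leq N}^4u)$, estimating $\|F\|_{L^2_tL^1_x}$ via \eqref{paratripleprod} and $\|G\|_{L^2_tL^{6/5}_x}$ via Bernstein and \eqref{duh}. Your three-piece split from \eqref{decomp} also works, but carries along the $u_{\leq cN(t)}$ factors and the parameter $\eta$ for no gain.

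Where you diverge meaningfully is the $N^{1-s}|\nabla|^s\uhi$ term, which you flag as the main obstacle and attack by re-running Strichartz on the equation for $|\nabla|^{s-1}\uhi$ and re-estimating the full nonlinearity. The paper instead dispatches this in two lines, using only the \emph{output} of Theorem~\ref{T:LTS} rather than its internals: for each dyadic $M\geq N$ one has, by Bernstein and interpolation between $L^2_tL^6_x$ and $L^\infty_tL^2_x$,
\[
M^{1-s}\bigl\||\nabla|^s u_M\bigr\|_{L^q_tL^r_x}\lesssim\|\nabla u_M\|_{L^q_tL^r_x}\lesssim A(M)^{2/q}\|\nabla u\|_{L^\infty_tL^2_x}^{(q-2)/q}\lesssim_u(1+M^3K)^{1/q};
\]
multiplying through by $M^{s-1}$ and summing over $M\geq N$ gives the claim, with the hypothesis $s<1-\tfrac3q$ entering precisely as the condition for the geometric sum $\sum_{M\geq N}M^{s-1+3/q}$ to converge. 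The point you are missing is that Theorem~\ref{T:LTS} bounds $A(M)$ for \emph{every} $M$, not just $M=N$; exploiting this frequency-by-frequency yields the $\uhi$ estimate for all admissible $(q,r)$ at once, without touching the equation again. Your approach works, but what you identify as the hard part is in fact the easiest.
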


\begin{proof}
Recall that Theorem~\ref{T:LTS} implies
$$
A(M):=\Bigl\{ \sum_{M'\leq M} \|\nabla u_{M'}\|_{L_t^2 L_x^6(I\times\R^3)}^2\Bigr\}^{1/2} \lesssim_u  (1 + M^{3} K)^{1/2}
$$
uniformly in $M$.  Setting $M=N$ yields all the estimates on $\ulo$ stated in the corollary.  More explicitly, the $q=2$ case of \eqref{grads in Strich} as well as \eqref{ulo in Strich}
follow from this statement and \eqref{duh}. The other values of $q$ can then be deduced by interpolation with the (conserved) energy.

Similarly, to estimate $\uhi$ we write
$$
M^{1-s} \| |\nabla|^s u_M \|_{L_t^q L_x^r} \lesssim \| \nabla u_M \|_{L_t^q L_x^r} \lesssim A(M)^{2/q} \|\nabla u\|_{L^\infty_tL^2_x}^{(q-2)/q} \lesssim_u (1 + M^{3} K)^{1/q},
$$
multiply through by $M^{s-1}$, and sum over $M\geq N$.  Notice that the condition $\frac3q+s <1$ guarantees the convergence of this sum.

Claim \eqref{E:22 bound} will follow by combining Proposition~\ref{P:DDS} and Theorem~\ref{T:LTS}.  First we write $(i\partial_t +\Delta)u_{>N}=F+G$ with
$F=P_{>N}\Oh(u_{>N}^2 u^3)$ and $G=P_{>N}\Oh(u_{\leq N}^4 u)$ and then estimate these as follows:  By Theorem~\ref{T:LTS} and \eqref{paratripleprod},
\begin{align*}
\| F \|_{L^2_t L^1_x} &\lesssim \|u\|_{L^\infty_t L^6_x}^2 \| \Oh(u_{>N}^2 u) \|_{L^2_t L^{3/2}_x} \lesssim_u N^{-3/2} + K^{1/2} ,
\end{align*}
while by Bernstein, Theorem~\ref{T:LTS}, and \eqref{duh},
\begin{align*}
\| G \|_{L^2_t L^{6/5}_x} &\lesssim N^{-1} \| \nabla \Oh(u_{\leq N}^4 u) \|_{L^2_t L^{6/5}_x}
    \lesssim N^{-1} \|\nabla u\|_{L^\infty_t L^2_x} \| u \|_{L^\infty_t L^6_x}^2 \| u_{\leq N} \|_{L^4_t L^\infty_x}^2\\
&\lesssim_u N^{-1} +N^{1/2}K^{1/2}.
\end{align*}
Putting these together with Proposition~\ref{P:DDS} yields
\begin{align*}
\rho^{1/2} \bigl\| \smudge u_{>N} \bigr\|_{L^2_t L^\infty_x(I\times\R^3)} \lesssim_u N^{-1} + \bigl(N^{1/2} + \rho^{-1/2}\bigr)\bigl( K + N^{-3} \bigr)^{1/2}.
\end{align*}
Noting from \eqref{E:smudge} that, modulo a factor of $\rho^{-3/2}$, $\smudge u_{>N}(t,x)$ controls the $L^2_x$ norm on the ball around $x$, and recalling the restriction on $\rho$, we deduce the claim.
\end{proof}

We now begin our analysis of the individual terms in Proposition~\ref{P:Interact}, beginning with the most important one:

\begin{lemma}[Mass-mass interactions]
\begin{align*}
 8\pi \|\uhi\|_{L^4_{t,x}(I\times\R^3)}^4 - \int_I\!\iint {-a_{jjkk}}(x-y) &\, |\uhi(y)|^{2} |\uhi(x)|^{2} \, dx \,dy\,dt  \\
 &\lesssim_u\frac{ \eta^2e^{2J}}{J} \bigl( K + N^{-3} \bigr).
\end{align*}
\end{lemma}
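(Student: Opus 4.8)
The plan is to recognize the distribution $-a_{jjkk}$ as $8\pi$ times a Dirac mass plus an error term supported at radii $\gtrsim R$, to observe that this error term is precisely what the left-hand side of the lemma measures, and then to estimate it using the localized mass bound \eqref{E:22 bound} together with the $L^2_x$-smallness of $\uhi$ provided by \eqref{E:etas job}. Since this is the most important term in Proposition~\ref{P:Interact}, the bookkeeping of the parameters $J$, $R$, $N$, $\eta$ must be done carefully.

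\emph{Step 1: Identifying the kernel.} Since $a(x)=|x|$ on $\{|x|\le R\}$, and $-\Delta^2|x|=8\pi\,\delta$ in three dimensions, while $a$ is constant on $\{|x|\ge e^JR\}$, we may write $-a_{jjkk}(x)=8\pi\,\delta(x)+E(x)$, where $E$ is a function supported on the annulus $\{R\le|x|\le e^JR\}$. On that annulus, the radial form of the biharmonic operator in $\R^3$, namely $\Delta^2 a = a_{rrrr}+\tfrac4r a_{rrr}$, combined with \eqref{a symbolic} (applied with $k=2,3$) yields the pointwise bound $|E(x)|\lesssim J^{-1}|x|^{-3}$. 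Substituting, the left-hand side of the lemma becomes
\[
-\int_I\!\iint E(x-y)\,|\uhi(t,y)|^2|\uhi(t,x)|^2\,dx\,dy\,dt ,
\]
so it suffices to bound the absolute value of this quantity.

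\emph{Step 2: Estimating the error.} I would pull out one factor $|\uhi(t,x)|^2$ in $L^\infty_t L^2_x$ and estimate the remaining spatial integral against $J^{-1}|x-y|^{-3}$ over the annulus by a dyadic decomposition in $|x-y|$. For each dyadic $\rho\in[R,e^JR]$ one has $|x-y|^{-3}\lesssim\rho^{-3}$ on $\{|x-y|\sim\rho\}$; since $\rho\le e^JR=N^{-1}$ by \eqref{E:JRN}, the bound \eqref{E:22 bound} gives $\int_I\sup_x\int_{|x-y|\le\rho}|\uhi(t,y)|^2\,dy\,dt\lesssim_u\rho(K+N^{-3})$. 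Summing the resulting geometric series $\sum_{\rho\ge R}\rho^{-3}\cdot\rho=\sum_{\rho\ge R}\rho^{-2}\lesssim R^{-2}$ gives
\[
\int_I\sup_x\int_{R\le|x-y|\le e^JR}|x-y|^{-3}\,|\uhi(t,y)|^2\,dy\,dt\lesssim_u R^{-2}(K+N^{-3}).
\]
The crucial point — and what I expect to be the main obstacle in getting the sharp constant — is that \eqref{E:22 bound} grows only linearly in the radius, so the dyadic sum is dominated by its innermost scale $|x-y|\sim R$; bounding $|x-y|^{-3}$ crudely by $R^{-3}$ over the whole annulus would cost a spurious factor $e^J$ and miss the claimed estimate.

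\emph{Step 3: Conclusion.} By \eqref{E:etas job} we have $\|\uhi\|_{L^\infty_t L^2_x}^2\lesssim\eta^2 N^{-2}$. Combining this with Step~2,
\[
\Bigl|\int_I\!\iint E(x-y)\,|\uhi(t,y)|^2|\uhi(t,x)|^2\,dx\,dy\,dt\Bigr| \lesssim J^{-1}\,\|\uhi\|_{L^\infty_t L^2_x}^2\, R^{-2}(K+N^{-3}) \lesssim_u \tfrac{\eta^2}{J}\,N^{-2}R^{-2}(K+N^{-3}).
\]
Finally, the normalization \eqref{E:JRN} gives $R=e^{-J}N^{-1}$, hence $N^{-2}R^{-2}=e^{2J}$, so the right-hand side equals $\tfrac{\eta^2 e^{2J}}{J}(K+N^{-3})$, which is the assertion of the lemma. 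The only genuinely delicate point is the one flagged in Step~2; the computation of $-\Delta^2 a$ and its decay rate in Step~1 is routine but must be carried out to pin down the $J^{-1}r^{-3}$ behavior.
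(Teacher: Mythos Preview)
Your proposal is correct and follows essentially the same route as the paper: identify $-a_{jjkk}=8\pi\delta$ on $\{|x|\le R\}$, bound $|a_{jjkk}|\lesssim J^{-1}|x|^{-3}$ on the annulus via \eqref{a symbolic}, pull one copy of $|\uhi|^2$ out using $\|\uhi\|_{L^\infty_t L^2_x}^2\lesssim \eta^2 N^{-2}$ from \eqref{E:etas job}, apply \eqref{E:22 bound} on each dyadic shell, and sum the resulting geometric series to get $R^{-2}$, then close with \eqref{E:JRN}. One minor wording quibble: in Step~2 you write ``pull out $|\uhi(t,x)|^2$ in $L^\infty_t L^2_x$'' when you mean $L^\infty_t L^1_x$ (equivalently, pull $\uhi$ out in $L^\infty_t L^2_x$); the intended estimate is clear.
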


\begin{proof}
In three dimensions, $\Delta |x| = 2|x|^{-1}$ and $-(4\pi|x|)^{-1}$ is the fundamental solution of Laplace's equation.  In this way, we are left to estimate the error terms
originating from the truncation of $a$ at radii $|x-y|\geq R$.  Combining \eqref{a symbolic} and \eqref{E:22 bound} yields
\begin{align*}
\int_I \iint_{|x-y|\geq R} & \bigl|a_{jjkk}(x-y)\bigr| \, |\uhi(y)|^{2} |\uhi(x)|^{2} \, dx \,dy\,dt \\
&\lesssim_u J^{-1} \|\uhi\|_{L^\infty_t L^2_x}^2 \sum_{j=0}^{J} (Re^j)^{-3} (Re^j) \bigl( K + N^{-3} \bigr).
\end{align*}
To obtain the lemma, we simply invoke \eqref{E:etas job} as well as \eqref{E:JRN}.
\end{proof}

The second most important term originates from \eqref{GIM:good}.  Its importance stems from the fact that it contains additional coercivity that we will use to estimate other
error terms below.

\begin{lemma}\label{L:goody}
We estimate \eqref{GIM:good} in two pieces:
\begin{gather}
 B_I := \int_I \iint_{|x-y|\leq e^{J-J_0}R} \tfrac{4}{3} a_{kk}(x-y) |\uhi(x)|^{6}|\uhi(y)|^2 \, dx \,dy\,dt \geq 0, \label{BI defn}
\end{gather}
as $a_{kk}\geq 0$ there, and on the complementary region,
\begin{gather}
\int_I \iint_{|x-y|\geq e^{J-J_0}R} |a_{kk}(x-y)| |\uhi(x)|^{6}|\uhi(y)|^2 \, dx \,dy\,dt \lesssim \tfrac{J_0^2}{J} \bigl( K + N^{-3} \bigr). \label{goody}
\end{gather}
\end{lemma}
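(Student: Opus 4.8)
The plan is to treat the two assertions of the lemma separately; both amount to bookkeeping once the construction \eqref{a Defn}--\eqref{Lap a} of the weight $a$ and the a priori bounds of Corollary~\ref{C:all bounds} are in hand.

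For \eqref{BI defn} I would simply note that on the region $|x-y|\leq e^{J-J_0}R$ one has $a_{kk}=a_{rr}+\tfrac2r a_r\geq 0$, so that the integrand is pointwise nonnegative and $B_I\geq0$. This positivity is exactly what the construction of $a$ was designed to give: for $r\leq R$ we have $a_{kk}=2/r$; for $eR\leq r\leq e^{J-J_0}R$ it is the content of \eqref{Lap a}; and on the short transition region $R\leq r\leq eR$ it follows, for $J$ sufficiently large, by combining $a_r\in[1-J^{-1},1]$ (since $a_{rr}\leq 0$) with the derivative bound $|a_{rr}|\lesssim J^{-1}r^{-1}$ from \eqref{a symbolic}. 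No input from the solution is needed here.

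For \eqref{goody} the first step is to shrink the region of integration: since $a_r\equiv 0$ for $r\geq e^J R$, the weight is constant (hence $a_{kk}=0$) beyond radius $e^J R$, so the integral is really over the annulus $e^{J-J_0}R\leq|x-y|\leq e^J R$. On that annulus I would record the pointwise bound $|a_{kk}(x-y)|\lesssim J_0 J^{-1}|x-y|^{-1}$, which comes from $|a_{rr}|\lesssim J^{-1}r^{-1}$ (by \eqref{a symbolic}) together with $0\leq a_r\leq J_0/J$ there --- a consequence of $a_{rr}\leq 0$, the matching value $a_r=J_0/J$ at $r=e^{J-J_0}R$, and $a_r=0$ at $r=e^J R$. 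I would then split this annulus $e$-adically into $\lesssim J_0$ shells $A_j=\{e^{J-j-1}R<|x-y|\leq e^{J-j}R\}$. On a single shell $A_j$, bound $|a_{kk}|$ by its supremum $\lesssim J_0 J^{-1}(e^{J-j}R)^{-1}$, pull the $x$-integral of $|\uhi|^6$ out of the time integral as $\|\uhi\|_{L^\infty_t L^6_x}^6\lesssim_u 1$ (boundedness in $\dot H^1_x$ plus Sobolev embedding), and apply the double-Duhamel mass bound \eqref{E:22 bound} --- legitimate precisely because the radius $e^{J-j}R\leq e^J R=N^{-1}$ by \eqref{E:JRN} --- to see that $\int_I\sup_x\int_{|x-y|\leq e^{J-j}R}|\uhi(t,y)|^2\,dy\,dt\lesssim_u e^{J-j}R\,(K+N^{-3})$. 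The product of the three factors is $\lesssim_u J_0 J^{-1}(K+N^{-3})$ on each shell, and summing over the $\lesssim J_0$ shells gives the claimed $J_0^2 J^{-1}(K+N^{-3})$.

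The one point requiring genuine care --- and the reason the cutoff of $a$ was performed with a logarithmic rather than an abrupt transition --- is obtaining a bound that is \emph{polynomial} in $J_0$: one power of $J_0$ is the $J_0/(Jr)$ size of $a_{kk}$ across the transition and filled-in regions, and the second comes from the number of $e$-adic shells that must be summed. Estimating $a_{kk}$ only by $|x-y|^{-1}$ and the inner radius by $e^{J_0}N$ would instead produce an unusable exponential factor $e^{J_0}$, so the sharper size bound on $a_{kk}$ is essential. Everything else is routine.
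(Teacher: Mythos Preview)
Your proof is correct and follows essentially the same route as the paper. You supply a bit more detail in two places --- explicitly checking $a_{kk}\geq 0$ on the transition shell $R\leq r\leq eR$ (which the paper absorbs into the phrase ``immediate from \eqref{Lap a}''), and explaining why the sharp bound $|a_{kk}|\lesssim J_0(Jr)^{-1}$ rather than merely $r^{-1}$ is needed --- but the core argument (restrict to the annulus $e^{J-J_0}R\leq|x-y|\leq e^J R$, bound $|a_{kk}|$ by $J_0(J|x-y|)^{-1}$, pull out $\|\uhi\|_{L^\infty_t L^6_x}^6$, decompose $e$-adically into $\sim J_0$ shells, and apply \eqref{E:22 bound} on each) is identical to the paper's.
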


\begin{proof}
That $a_{kk}\geq 0$ and hence $B_I\geq 0$ is immediate from \eqref{Lap a}.  Further, by construction, $|a_{kk}| \lesssim J_0(Jr)^{-1}$ when $r\geq e^{J-J_0}R$.
In this way, we see that \eqref{goody} relies only on controlling
\begin{align*}
\int_I \iint_{e^{J-J_0}R\leq |x-y|\leq e^J R } & \frac{J_0 |\uhi(x)|^{6}|\uhi(y)|^2}{J|x-y|} \, dx \,dy\,dt,
\end{align*}
which by \eqref{E:22 bound} is
\begin{align*}
&\lesssim_u J_0 \|\uhi\|_{L^\infty_tL^6_x}^6 \sum_{j=J-J_0}^J (Je^jR)^{-1} \cdot  (e^jR)\bigl( K + N^{-3} \bigr) \\
&\lesssim_u \tfrac{J_0^2}{J} \bigl( K + N^{-3} \bigr),
\end{align*}
as needed.
\end{proof}

Now we come to the most dangerous looking term, \eqref{GIM:grad}.  Satisfactory control relies on the full strength of \eqref{Lap a}.

\begin{lemma} Let
$$
\Phi_{jk}(x,y) :=  |\uhi(y)|^{2} \partial_j \overline{\uhi}(x) \partial_k\uhi(x) - (\Im \overline{\uhi}(y) \partial_j \uhi(y))(\Im \overline{\uhi}(x) \partial_k\uhi(x)).
$$
Then
$$
\begin{aligned}
 - \int_I\iint 4a_{jk}(x-y) \Phi_{jk}(x,y) dx\,dy\,dt \lesssim_u  \bigl( \eta^2 + \tfrac{J_0}{J} \bigr)\bigl( K + N^{-3} \bigr) + \tfrac{1}{J_0} B_I.
\end{aligned}
$$
For the $B_I$ notation, refer \eqref{BI defn}.
\end{lemma}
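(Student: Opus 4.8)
The plan is to symmetrise the integrand in $x\leftrightarrow y$ and then to extract the definite sign built into the momentum bracket. Since $a$ is radial we may write, with $r=|x-y|$ and $\omega=(x-y)/|x-y|$,
$$
a_{jk}(x-y)=\tfrac{a_r}{r}\,(\delta_{jk}-\omega_j\omega_k)+a_{rr}\,\omega_j\omega_k ,
$$
and $a_{jk}(x-y)=a_{jk}(y-x)$ because $a$ is even. Averaging the integral with its image under $x\leftrightarrow y$ therefore replaces $\Phi_{jk}(x,y)$ by its symmetrisation $\Phi^{\mathrm{sym}}_{jk}$, and a Cauchy--Schwarz/AM--GM computation (using $|\Im(\overline{\uhi}\,\partial_j\uhi)|\le|\uhi|\,|\nabla\uhi|$) shows that $(\delta_{jk}-\omega_j\omega_k)\Phi^{\mathrm{sym}}_{jk}\ge0$ and $0\le\omega_j\omega_k\Phi^{\mathrm{sym}}_{jk}\lesssim|\uhi(y)|^2|\nabla\uhi(x)|^2+|\uhi(x)|^2|\nabla\uhi(y)|^2$. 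As $a_r\ge0$, the angular part of $-4a_{jk}\Phi^{\mathrm{sym}}_{jk}$ is nonpositive and may be discarded; as $a_{rr}\le0$, the radial part is pointwise $\le0$ in sign of coefficient but bounded by the quadratic quantity above. Using the $x\leftrightarrow y$ symmetry once more, we are reduced to bounding
$$
\mathcal{E}:=\int_I\!\iint |a_{rr}(x-y)|\;|\uhi(y)|^2\,|\nabla\uhi(x)|^2\,dx\,dy\,dt ,
$$
where by \eqref{a Defn} and \eqref{a symbolic} the weight $|a_{rr}|$ is supported in $\{R\le|x-y|\le e^{J}R=N^{-1}\}$ and satisfies $|a_{rr}|\lesssim(J|x-y|)^{-1}$ there.

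I would estimate $\mathcal{E}$ by splitting its domain into three regions, extracting a compensating smallness in each one \emph{before} summing the $\sim J$ dyadic shells $|x-y|\sim Re^{k}$ (otherwise the factor $J^{-1}$ from $|a_{rr}|$ is lost to the number of shells). \emph{Region (i): $|x-x(t)|>R/2$.} Here the third term in \eqref{E:etas job} gives $\int_{|x-x(t)|>R/2}|\nabla\uhi(t,x)|^2\,dx<\eta^2$ for all $t$; pulling this out and summing \eqref{E:22 bound} over the shells yields $\lesssim_u\eta^2(K+N^{-3})$. \emph{Region (ii): $|x-x(t)|\le R/2$ and $|x-y|\ge e^{J-J_0}R$.} Only $\sim J_0$ shells occur, so the crude bound $\|\nabla\uhi\|_{L^\infty_tL^2_x}\lesssim_u1$ together with \eqref{E:22 bound} gives $\lesssim_u\frac{J_0}{J}(K+N^{-3})$; the single shell $R\le|x-y|\le eR$ with $|x-x(t)|\le R/2$ contributes $\lesssim_u\frac1J(K+N^{-3})$ the same way. \emph{Region (iii): $|x-x(t)|\le R/2$ and $eR\le|x-y|\le e^{J-J_0}R$.} This is the crux.

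In region (iii) I would invoke the quantitative coercivity \eqref{Lap a}: there $a_{kk}=a_{rr}+\tfrac2r a_r\ge\tfrac{2J_0-1}{Jr}$ while $|a_{rr}|\le\tfrac1{Jr}$, hence $|a_{rr}|\lesssim J_0^{-1}a_{kk}$, and the region-(iii) part of $\mathcal{E}$ is $\lesssim J_0^{-1}\int_I\iint a_{kk}(x-y)|\uhi(y)|^2|\nabla\uhi(x)|^2$ over the same range. For $x$ with $|x-x(t)|\le R/2$ one has the local energy inequality
$$
\int_{|x-x(t)|\le R/2}|\nabla\uhi(t,x)|^2\,dx\;\lesssim_u\;\int_{|x-x(t)|\le R}|\uhi(t,x)|^6\,dx+\eta^6 ,
$$
which follows from $\int_{\R^3}|\nabla u|^2\lesssim_u\int_{\R^3}|u|^6$ (Remark~\ref{R:small freq}), the bound $\|\ulo\|_{L^\infty_tL^6_x}\lesssim\eta$ (from \eqref{E:etas job} and Sobolev embedding), and the $L^6_x$-tightness of $\uhi$ about $x(t)$ (again a consequence of almost periodicity, cf.\ Remark~\ref{R:small freq}). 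Inserting this — the $\eta^6$ error being summed exactly as in region (i) — and noting that for $|x-x(t)|\le R/2$, $eR\le|x-y|\le e^{J-J_0}R$ one has $a_{kk}(x-y)\sim a_{kk}(|y-x(t)|)$ and $|x-y|\sim|y-x(t)|$, the surviving term is $\lesssim_u J_0^{-1}\int_I\iint a_{kk}(x-y)|\uhi(y)|^2|\uhi(x)|^6\,dx\,dy\,dt\lesssim_u J_0^{-1}B_I$ by \eqref{BI defn}. Assembling the three regions gives the asserted bound.

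The main obstacle is region (iii). One must genuinely use that $\uhi$ differs from an almost periodic solution only by a low-frequency piece of size $O(\eta)$ — concretely, the global bound $\int|\nabla u|^2\lesssim_u\int|u|^6$ together with the tightness of $u$ about $x(t)$ — in order to convert a kinetic-energy weight into a potential-energy weight and thereby feed the surplus into the good term $B_I$ supplied by Lemma~\ref{L:goody}. The supporting bookkeeping is also delicate for the reason indicated above: the $J^{-1}$ furnished by $|a_{rr}|$ is exactly cancelled by the $\sim J$ dyadic shells, so in each region the real gain must come from somewhere else — $\eta^2$-tightness in (i), the shortness $e^{J_0}$ of the shell range in (ii), and the $J_0$-enhanced coercivity feeding $B_I$ in (iii) — with the parameters $J_0$, $J$, $\eta$, $N$ chosen (in this order) to make all error terms admissible.
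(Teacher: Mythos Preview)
Your argument is correct and follows essentially the same approach as the paper: symmetrise in $x\leftrightarrow y$, use positive semi-definiteness of the symmetrised $\Phi_{jk}$ together with the eigenstructure of the radial Hessian to reduce to the integral with weight $\sim(J|x-y|)^{-1}$ on $R\le|x-y|\le e^JR$, then split according to whether $|x-x(t)|\gtrless R/2$ and whether $|x-y|\gtrless e^{J-J_0}R$, using \eqref{E:etas job}, \eqref{E:22 bound}, and the coercivity \eqref{Lap a} feeding into $B_I$ exactly as the paper does. The only cosmetic differences are that the paper states the local energy inequality in the cleaner global form $\int_{\R^3}|\nabla\uhi|^2\lesssim_u\int_{|x-x(t)|\le R/2}|\uhi|^6$ (no $\eta^6$ residue) and does not separately isolate the shell $R\le|x-y|\le eR$.
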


\begin{proof}
As $a_{jk}(x-y)$ is invariant under $x\leftrightarrow y$, we may replace $\Phi$ by the matrix
$$
 \tfrac12 \Phi_{jk}(x,y) + \tfrac12 \Phi_{jk}(y,x),
$$
which is Hermitian-symmetric.  Moreover, for each $x,y$ this matrix defines a positive semi-definite quadratic form on $\R^3$.
To see this, notice that for any vector $e\in\R^3$ and any function $\phi$,
\begin{align*}
\bigl|  e_k e_j  (\Im \bar \phi(y) \phi_j(y))(\Im \bar \phi(x) \phi_k(x)) \bigr|
&\leq  |\phi(y)|\,|e\cdot\nabla \phi(y)|\,|\phi(x)|\,|e\cdot\nabla \phi(x)| \\
&\leq  \tfrac12 |\phi(x)|^2|e\cdot\nabla \phi(y)|^2 + \tfrac12 |\phi(y)|^2|e\cdot\nabla \phi(x)|^2.
\end{align*}
As $a_{jk}$ is a real symmetric matrix (for any $x$ and $y$), its eigenvectors are real. Thus, wherever $a_{jk}$ is positive semi-definite (i.e., $a$ is convex),
the integrand has a favourable sign.  In general, the eigenvalues of the Hessian of a spherically symmetric
function are $a_{rr}$ and $r^{-1}a_r$ with the latter having multiplicity two (ambient dimension minus one).  In our case $a_r\geq0$ and $|a_{rr}|\lesssim J^{-1}r^{-1}$.
Therefore, we are left to estimate
\begin{align}\label{jjkk subgoal}
\int_I\iint_{R<|x-y|<e^JR}  \frac{|\nabla \uhi(x)|^2 |\uhi(y)|^{2}}{J |x-y|} \,dx\,dy\,dt.
\end{align}
To do this, we break the integral into two regions: $|x-x(t)| > R/2$ and $|x-x(t)| \leq R/2$.  In the former case, we use \eqref{E:etas job} and \eqref{E:22 bound}
to obtain the bound
\begin{align*}
&\lesssim_u \|\nabla \uhi\|_{L^\infty_t L^2_x(|x-x(t)|> R/2)}^2  \sum_{j=0}^J (J e^jR)^{-1} \cdot  (e^jR)\bigl( K + N^{-3} \bigr)
    \lesssim_u \eta^2 \bigl( K + N^{-3} \bigr).
\end{align*}
When $|x-x(t)| \leq R/2$, we further subdivide into two regions.  When additionally $|x-y| \geq Re^{J-J_0}$, we estimate in much the same manner as above to obtain the bound
\begin{align*}
&\lesssim_u \|\nabla u\|_{L^\infty_tL^2_x}^2 \sum_{j=J-J_0}^J (J e^jR)^{-1} \cdot  (e^jR)\bigl( K + N^{-3} \bigr)
    \lesssim_u  \tfrac{J_0}{J} \bigl( K + N^{-3} \bigr).
\end{align*}

This leaves us to consider the integral \eqref{jjkk subgoal} over the region where $|x-x(t)| \leq R/2$ and $|x-y| < Re^{J-J_0}$.  Here we use the fact that by the almost periodicity of $u$
(cf. also Remark~\ref{R:small freq} and \eqref{E:etas job}),
$$
\int_{\R^3} |\nabla \uhi(t,x)|^2 \,dx \lesssim_u \int_{\R^3} |\uhi(t,x)|^6 \,dx \lesssim_u \int_{|x-x(t)|\leq R/2} |\uhi(t,x)|^6 \,dx,
$$
uniformly for $t\in[0, \Tmax)$.  We also observe from \eqref{Lap a} that $ J_0 (Jr)^{-1} \leq a_{kk}$; recall $J_0\geq1$.  Therefore, the remaining integral is $\lesssim_u \tfrac{1}{J_0} B_I$.
\end{proof}

The terms appearing in \eqref{GIM:pb} are referred to as momentum bracket terms on account of the notation
\begin{align}
\{\ef,\phi\}_p:=\Re(\ef\nabla\bar{\phi}-\phi\nabla\bar\ef).
\end{align}
Note that applying Proposition~\ref{P:Interact} with $\phi=u_{>N}$ gives $\ef=\ph F(u) - F(\uhi)$.  These error terms are comparatively easy to control:

\begin{lemma}[Momentum bracket terms]\label{L:P brack}
For any $\eps\in(0,1]$,
\begin{equation}
\begin{aligned}\label{E:P bound}
\int_I \int_{\R^3} \int_{\R^3} |\uhi(t,y)|^{2} &\, \nabla a(x-y) \cdot \{\ef,\phi\}_p \,dx\,dy\,dt \\
    &\lesssim_u \eps B_I +  \eta \|\uhi\|_{L^4_{t,x}}^4 + \bigl(\eps^{-1}\eta+\eps\tfrac{J_0^2}{J}\bigr) (N^{-3}+K).
\end{aligned}
\end{equation}
\end{lemma}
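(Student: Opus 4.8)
The plan is to move the derivative off $\ef$ by an integration by parts in $x$, and then to split $\ef$ into the part that telescopes against the main interaction term and the genuinely small remainders. Since the weight $|\uhi(y)|^2$ does not depend on $x$, one may use the pointwise identity $\{\ef,\uhi\}_p = 2\Re(\bar\ef\,\nabla\uhi) - \nabla\bigl[\Re(\ef\,\bar\uhi)\bigr]$ and integrate the second term by parts in $x$ (no boundary terms, as $a$ is eventually constant and $\ef\bar\uhi$ decays); using $\Delta_x a(x-y)=a_{kk}(x-y)$ this recasts the left-hand side of \eqref{E:P bound} as
\[
 \int_I\!\iint |\uhi(y)|^2\Bigl[\,2\,\nabla a(x-y)\cdot\Re\bigl(\bar\ef\,\nabla\uhi\bigr)(x) + a_{kk}(x-y)\,\Re\bigl(\ef\,\bar\uhi\bigr)(x)\,\Bigr]\,dx\,dy\,dt,
\]
in which $\ef$ itself, not its derivative, appears. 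Recalling $u=\uhi+\ulo$ and $\ef=\ph F(u)-F(\uhi)$, I would then decompose $\ef = -\pl F(\uhi) + \ph\,\Oh\bigl(\ulo\,u^4\bigr)$, i.e.\ the purely-$\uhi$ low-frequency leakage $-\pl F(\uhi)$ — which I split further as $-F(\uhi)+\ph F(\uhi)$ to expose a telescoping — plus all terms carrying at least one factor $\ulo$.

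For the summand $-F(\uhi)$ I would use $\Re(\overline{F(\uhi)}\,\nabla\uhi)=|\uhi|^4\Re(\bar\uhi\nabla\uhi)=\tfrac16\nabla|\uhi|^6$ and $\Re(F(\uhi)\bar\uhi)=|\uhi|^6$, and integrate the gradient by parts once more; this summand then contributes exactly $-\tfrac23\int_I\iint |\uhi(y)|^2 a_{kk}(x-y)|\uhi(x)|^6\,dx\,dy\,dt$. On $\{|x-y|\le e^{J-J_0}R\}$ this is $\le 0$ by \eqref{Lap a} (it equals $-\tfrac12 B_I$), hence harmless for an upper bound, while on the complementary shell $|a_{kk}|\lesssim J_0(J|x-y|)^{-1}$ by \eqref{a symbolic} and, using $e^JRN=1$ from \eqref{E:JRN}, a dyadic decomposition in $|x-y|$ together with \eqref{E:22 bound} bounds it by $\lesssim_u \tfrac{J_0^2}{J}(N^{-3}+K)$.

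For the remaining summands I record that $|\nabla a|\le 1$ and $|a_{kk}(x-y)|\lesssim |x-y|^{-1}$ on the support $\{|x-y|\le e^JR=N^{-1}\}$, with the improvement $|a_{kk}|\lesssim J_0(J|x-y|)^{-1}$ once $|x-y|\ge R$. The $y$-integration is handled either by a dyadic decomposition and \eqref{E:22 bound}, or, where cheaper, via $\int |\uhi(t,y)|^2 |x-y|^{-1}\,dy \lesssim_u \|\uhi(t)\|_{L^2_x}\|\uhi(t)\|_{L^6_x}$, which is small because \eqref{E:etas job} yields $\|\uhi(t)\|_{L^2_x}\lesssim\eta/N$. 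The leftover factor $|\uhi(x)|$ or $|\nabla\uhi(x)|$ is paired with the other factors by Hölder, using the a priori bounds of Corollary~\ref{C:all bounds}; each factor $\ulo$ then contributes a power of $\eta$ via \eqref{ulo in Strich} (or \eqref{E:etas job}). When a factor $|\nabla\uhi(x)|$ cannot be absorbed cheaply, I would apply Cauchy--Schwarz, splitting the offending sextic-type quantity as $\eps\, a_{kk}(x-y)|\uhi(x)|^6|\uhi(y)|^2$ plus $\eps^{-1}$ times a small remainder: the first piece produces $\eps B_I$ on the main region and $\eps\,\tfrac{J_0^2}{J}(N^{-3}+K)$ on the outer shell, the second produces the $\eps^{-1}\eta(N^{-3}+K)$ term, while the pieces that collapse directly to a multiple of $\|\uhi\|_{L^4_{t,x}}^4$ retain a power of $\eta$; summing the three kinds of contribution gives \eqref{E:P bound}.

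The main obstacle is twofold. First, the pieces of $\ef$ carrying no factor of $\ulo$ — the low-frequency leakage $\pl F(\uhi)$, equivalently $F(\uhi)-\ph F(\uhi)$ — are not small by any soft mechanism: one must peel off the exactly-telescoping part $-F(\uhi)$ and then estimate the residual high-frequency piece $\ph F(\uhi)$ by combining Bernstein with the Strichartz bounds of Corollary~\ref{C:all bounds} and the smallness $\|\uhi(t)\|_{L^2_x}\lesssim\eta/N$, and this is the most delicate bookkeeping. Second, once the derivative has been moved onto $a$ the multiplier $a_{kk}$ decays only like $|x-y|^{-1}$, exactly at the threshold of local integrability in three dimensions; closing the $y$-integrals therefore genuinely requires the sharp ball-averaged mass bound \eqref{E:22 bound} (the double Duhamel input of Proposition~\ref{P:DDS}), and in the transition region $R\le|x-y|\le e^JR$ the resulting geometric sums are tamed only by the $J^{-1}\log$-decay deliberately built into \eqref{a symbolic}--\eqref{Lap a}, which is why $J$ and $J_0$ intervene and why the losses take the shape $\tfrac{J_0^2}{J}(N^{-3}+K)$. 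Keeping these losses and the $\eps B_I$ genuinely small — so that the coercive $B_I$ survives with a favorable sign for use in the companion lemmas — is what ultimately dictates the order in which $\eps$, $J_0$, $J$, $\eta$ and $N$ are chosen.
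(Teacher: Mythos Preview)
Your approach has two genuine gaps, and they are structural rather than bookkeeping.

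\textbf{The telescoping is illusory.} You split $-\pl F(\uhi)=-F(\uhi)+\ph F(\uhi)$ and declare the $-F(\uhi)$ contribution, which integrates to $-\tfrac12 B_I$ on the inner region, to be ``harmless for an upper bound.'' But the estimate is needed with the opposite sign in the proof of Theorem~\ref{T:flim} (one must bound $-\int\text{(GIM:pb)}$ from above), so what actually enters the bootstrap is $+\tfrac12 B_I$, which is far too large to absorb. More to the point, $\ph F(\uhi)$ is not small: five high-frequency factors place most of $F(\uhi)$ at high frequency, so $\ph F(\uhi)\approx F(\uhi)$ and its contribution is $\approx +\tfrac12 B_I$, cancelling the previous piece exactly. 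Your split therefore introduces two large cancelling terms and returns you to estimating $-\pl F(\uhi)$ directly---it buys nothing.

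\textbf{The bare $\nabla\uhi$ is fatal.} Your opening identity $\{\ef,\uhi\}_p=2\Re(\bar\ef\,\nabla\uhi)-\nabla\Re(\ef\bar\uhi)$ leaves a bare $\nabla\uhi$ paired with $\nabla a$ (not $a_{kk}$). For the pieces of $\ef$ with few or no $\ulo$ factors---in particular for $\pl F(\uhi)$ and for $\ph\Oh(\ulo\uhi^4)$---no H\"older placement closes: Corollary~\ref{C:all bounds} controls $|\nabla|^s\uhi$ in $L^q_tL^r_x$ only for $s<1-3/q$, never $s=1$, and your Cauchy--Schwarz plan produces $|\nabla\uhi|^2$, not $|\uhi|^6$, so it does not feed into $B_I$.

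The paper avoids both problems by a different decomposition. Using $\{F(\phi),\phi\}_p=-\tfrac23\nabla|\phi|^6$ for $\phi=u,\ulo,\uhi$, it writes
\[
\{\ef,\uhi\}_p=-\tfrac23\nabla\bigl(|u|^6-|\ulo|^6-|\uhi|^6\bigr)-\{F(u)-F(\ulo),\ulo\}_p-\{\pl F(u),\uhi\}_p,
\]
so that the gradient term, after integration by parts, involves only the \emph{mixed} powers $\sum_{j=1}^5\Oh(\uhi^j\ulo^{6-j})$; Young's inequality $|\uhi|^j|\ulo|^{6-j}\lesssim\eps|\uhi|^6+\eps^{-1}|\ulo|^2|\uhi|(|\uhi|+|\ulo|)^3$ is what produces the $\eps B_I$ with a genuinely small coefficient. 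In the remaining terms every derivative lands on $\ulo$ or on $\pl F(u)$, never on $\uhi$ alone. For the $\{\pl F(u),\uhi\}_p$ pieces the paper uses a further device you do not mention: write $\uhi=\diverge(\nabla\Delta^{-1}\uhi)$ and integrate by parts once more, trading a factor $\uhi$ for the better-controlled $|\nabla|^{-1}\uhi$ while transferring the derivative to $\pl F(u)$, where Bernstein makes it harmless. Without this trick the low-frequency leakage cannot be closed.
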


\begin{proof}
We begin by expanding the momentum bracket into several terms.  First, we note that
$\{F(\phi),\phi\}_p = -\tfrac23 \nabla |\phi|^6$ and so
\begin{align*}
\{\ef,\uhi\}_{p} &= -\tfrac23 \nabla \bigl(|u|^6 - |\ulo|^6 - |\uhi|^6\bigr) - \{F(u)-F(\ulo),\ulo\}_p - \{\pl F(u), \uhi\}_p.
\end{align*}
Then, using $\{f,g\}_{p}=\nabla(fg)+\Oh(f\nabla g)$, we obtain
\begin{equation}\label{E:P expansion}
\begin{aligned}
\{\ef,\uhi\}_{p} &= \nabla \sum_{j=1}^{5} O(u_{hi}^{j}u_{lo}^{6-j}) + \Oh(u^2 u_{hi} u_{lo}^2 \nabla \ulo) + \Oh(u^3 u_{hi}^2 \nabla \ulo)  \\
&\qquad\qquad + \nabla \Oh( \uhi\pl F(u)) + \Oh(\uhi \nabla \pl F(u)) .
\end{aligned}
\end{equation}
We will treat each of these terms in succession.  The presence of the gradient in front of a term is a signal that we will
integrate by parts in \eqref{E:P bound} before estimating its contribution.

We begin with the first term in \eqref{E:P expansion}.  Integrating by parts and using
$$
\sum_{j=1}^{5} |u_{hi}|^{j}|u_{lo}|^{6-j} \lesssim \eps |u_{hi}|^{6} + \eps^{-1} |u_{lo}|^{2} |u_{hi}| \bigl[ |u_{hi}| + |u_{lo}| \bigr]^3,
$$
we find that we need to obtain satisfactory estimates for
\begin{align}
 \eps \int_I\!\iint |a_{kk}(x-y)| |u_{hi}(t,y)|^2 |u_{hi}(t,x)|^{6} \,dx\,dy\,dt,
\end{align}
which follow already from Lemma~\ref{L:goody}, and for
\begin{align}\label{icky}
 \int_I\!\iint \frac{|u_{hi}(t,y)|^2 |u_{lo}(t,x)|^{2} |u_{hi}(t,x)| [ |u_{hi}(t,x)| + |u_{lo}(t,x)| ]^3}{\eps|x-y|} \,dx\,dy\,dt.
\end{align}
(To obtain this compact form, we use the fact that $|a_{kk}(x-y)|\lesssim |x-y|^{-1}$.)  To bound this second integral, we use the H\"older and Hardy--Littlewood--Sobolev
inequalities, as well as Corollary~\ref{C:all bounds} and \eqref{E:etas job}:
\begin{align*}
\text{\eqref{icky}}
    &\lesssim \eps^{-1} \bigl\| |x|^{-1} * |\uhi|^2 \bigr\|_{L_t^4 L_x^6} \|\uhi\|_{L_t^4 L_x^3} \|\ulo\|_{L_t^4L_x^\infty }^2 \|u\|_{L_t^\infty L_x^6}^3 \\
&\lesssim \eps^{-1} \|\uhi\|_{L_t^\infty L_x^2} \|\uhi\|_{L_t^4 L_x^3}^2 \|\ulo\|_{L_t^4L_x^\infty }^2 \|\ulo\|_{L_t^\infty L_x^6}^3\\
&\lesssim_u \eps^{-1} \eta(N^{-3} + K).
\end{align*}

We now move on to estimating the contribution of the second term in \eqref{E:P expansion}. This is easily estimated using Corollary~\ref{C:all bounds}:
\begin{align*}
\|\Oh(u^2\uhi u_{lo}^2 \nabla \ulo)\|_{L_{t,x}^1}
&\lesssim \|\uhi\|_{L_t^\infty L_x^2}\|\nabla\ulo\|_{L_t^2L_x^6}\|\ulo\|_{L_t^4L_x^\infty}^2 \|u\|_{L_t^\infty L_x^6}^2\\
&\lesssim_u \eta N^{-1}(1+N^3K).
\end{align*}
This takes the desired form when multiplied by
\begin{equation}\label{dumb y}
\int_{\R^3} |\uhi(t,y)|^2\,dy  \lesssim_u \eta^2 N^{-2}.
\end{equation}

To control the third term in \eqref{E:P expansion}, we use Bernstein together with Corollary~\ref{C:all bounds}:
\begin{align*}
\|\Oh(u^3 u_{hi}^2 \nabla \ulo) \|_{L_{t,x}^1} &\lesssim \|\nabla\ulo\|_{L_t^2L_x^\infty } \|\uhi\|_{L_{t,x}^4}^2 \|u\|_{L_t^\infty L_x^6}^{3} \\
&\lesssim_u  N^{1/2} \|\nabla\ulo\|_{L_t^2L_x^6 }  \|\uhi\|_{L_{t,x}^4}^2  \\
&\lesssim_u N^2 \|\uhi\|_{L_{t,x}^4}^4 + N^{-1} (1+N^3K).
\end{align*}

Next, we estimate the contribution from the fourth term in \eqref{E:P expansion}, which, after integration by parts, this takes the form
\begin{equation*}
- \int_I \iint |\uhi(t,y)|^{2} a_{kk}(x-y) \Oh\bigl( \uhi\pl F(u)\bigr)(t,x)  \,dx\,dy\,dt.
\end{equation*}
To continue, we write $\uhi(t,x) = \diverge(\nabla\Delta^{-1}\uhi(t,x))$ and integrate by parts once more.  This breaks the contribution into two parts; after applying H\"older's
inequality and the Mikhlin multiplier theorem, the total contribution is bounded by
\begin{align}\label{51}
& \bigl\| |x|^{-1} * |\uhi|^2 \bigr\|_{L_t^4 L_x^{12}}   \||\nabla|^{-1}\uhi\|_{L_t^2L_x^6} \|\nabla \pl F(u)\|_{L_t^4 L_x^{4/3}} \\
\label{52}
 {} + {}& \bigl\| |x|^{-2} * |\uhi|^2 \bigr\|_{L_t^4 L_x^{12/5}} \||\nabla|^{-1}\uhi\|_{L_t^2L_x^6}  \|\pl F(u)\|_{L_t^4 L_x^{12/5}}.
\end{align}

Applying the Hardy--Littlewood--Sobolev inequality to the first factor in each term and using Sobolev embedding on the very last factor, yields
\begin{align}
\text{\eqref{51}} + \text{\eqref{52}}  &\lesssim \bigl\| |\uhi|^2 \bigr\|_{L_t^4 L_x^{4/3}} \||\nabla|^{-1}\uhi\|_{L_t^2L_x^6} \|\nabla \pl F(u)\|_{L_t^4 L_x^{4/3}}.\label{5.1+2}
\end{align}

To estimate $\nabla \pl F(u)$, we decompose $F(u)=F(\ulo) + \Oh(\uhi u^4)$.  Using H\"older, Bernstein,
and Corollary~\ref{C:all bounds}, we obtain
\begin{align*}
\|\nabla \pl F(\ulo)\|_{L_t^4 L_x^{4/3}} &\lesssim N^{3/4} \|\nabla F(\ulo)\|_{L_t^4 L_x^1} \lesssim N^{3/4} \|\nabla \ulo\|_{L_t^4 L_x^3} \|\ulo\|_{L_t^\infty L_x^6}^4\\
        &\lesssim_u N^{3/4} (1+N^3K)^{1/4}, \\
\|\nabla \pl \Oh(\uhi u^4) \|_{L_t^4 L_x^{4/3}} &\lesssim N^{3/2} \|\uhi u^4\|_{L_t^4 L_x^{12/11}} \lesssim N^{3/2} \|\uhi\|_{L_{t,x}^4} \|u\|_{L_t^\infty L_x^6}^4\\
        &\lesssim_u N^{3/2} \|\uhi\|_{L_{t,x}^4},
\end{align*}
Putting these together with Corollary~\ref{C:all bounds} and \eqref{E:etas job} yields
\begin{align*}
\text{\eqref{5.1+2}} &\lesssim \|\uhi\|_{L_t^\infty L_x^2} \|\uhi\|_{L_{t,x}^4} \||\nabla|^{-1}\uhi\|_{L_t^2L_x^6} \bigl[N^{3/4} (1+N^3K)^{1/4} + N^{3/2} \|\uhi\|_{L_{t,x}^4}  \bigr] \\
&\lesssim_u  \eta N^{-1}  \|\uhi\|_{L_{t,x}^4} N^{-2}(1+N^3K)^{1/2} \bigl[N^{3/4} (1+N^3K)^{1/4} + N^{3/2} \|\uhi\|_{L_{t,x}^4}  \bigr] \\
&\lesssim_u  \eta \bigl[ \|\uhi\|_{L_{t,x}^4}^4  + (N^{-3}+K) \bigr]
\end{align*}

For the fifth (and last) term in \eqref{E:P expansion}, we again write $\uhi = \diverge(\nabla\Delta^{-1}\uhi)$.  After integrating by parts once, the contribution splits
into two pieces, one of which is controlled by \eqref{51} and another which we bound by
\begin{align}\label{53}
\bigl\| (\nabla a) * |\uhi|^2 \bigr\|_{L_t^\infty L_x^\infty} & \||\nabla|^{-1}\uhi\|_{L_t^2L_x^6} \|\Delta \pl F(u)\|_{L_t^2L_x^{6/5}} .
\end{align}
We now decompose $F(u)=F(\ulo) + \Oh(\uhi u_{lo}^2 u^2) +  \Oh(u_{hi}^2 u^3)$.  Using the H\"older and Bernstein inequalities, we deduce
\begin{gather*}
\|\Delta \pl F(\ulo)\|_{L_t^2L_x^{6/5}}\lesssim N\|\nabla \ulo\|_{L_t^2L_x^6} \|\ulo\|_{L_t^\infty L_x^6}^4\lesssim_u N (1+N^3K)^{1/2}, \\
\|\Delta \pl\Oh(u_{hi} u_{lo}^2 u^2)\|_{L_t^2L_x^{6/5}} \lesssim N^2 \|\uhi\|_{L_t^\infty L_x^2} \|\ulo\|_{L_t^4L_x^\infty}^2 \|u\|_{L_t^\infty L_x^6}^2
    \lesssim_u N (1+N^3K)^{1/2},
\end{gather*}
and
\begin{align*}
\|\Delta \pl\Oh(u_{hi}^2 u^3)\|_{L_t^2L_x^{6/5}} \lesssim N^{5/2} \|\uhi\|_{L_{t,x}^4}^2 \|u\|_{L_t^\infty L_x^6}^3 \lesssim_u N^{5/2} \|\uhi\|_{L_{t,x}^4}^2.
\end{align*}
Putting it all together we find
\begin{align*}
\text{\eqref{53}} &\lesssim  \| \uhi \|_{L_t^\infty L_x^2}^2 N^{-2} (1+N^3K)^{1/2} \bigl[N (1+N^3K)^{1/2} + N^{5/2} \|\uhi\|_{L_{t,x}^4}^2  \bigr] \\
&\lesssim_u  \eta^2 \bigl[ \|\uhi\|_{L_{t,x}^4}^4  + (N^{-3}+K) \bigr].
\end{align*}

With the last term estimated satisfactorily, the proof of Lemma~\ref{L:P brack} is now complete.
\end{proof}

Looking back to Proposition~\ref{P:Interact}, we are left with just one term in $\partial_t M(t)$ to estimate, namely, \eqref{GIM:mb}.
As in \cite{CKSTT:gwp}, we call this the mass (Poisson) bracket term and use the notation
$$
\{\ef,\phi\}_m:=\Im(\ef \bar{\phi}).
$$
Notice that $\{|\phi|^4\phi,\phi\}_m=0$ for any function $\phi$.

\begin{lemma}[Mass bracket terms]\label{L:M brack}
For any $\eps>0$,
\begin{equation}\label{M bound}
\begin{aligned}
\biggl| \Im \int_I\int_{\R^3} \int_{\R^3} & \!\{\ef,\uhi\}_{m}(t,y)\nabla a(x-y)\cdot\nabla\uhi(t,x) \overline{\uhi(t,x)} \, dx\, dy\, dt \biggr| \\
    &\lesssim \eta^{1/4} \bigl(\|\uhi\|_{L^4_{t,x}}^4 + N^{-3}+K \bigr).
\end{aligned}
\end{equation}
\end{lemma}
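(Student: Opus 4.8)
The plan is to exploit the pointwise identity $\{|\phi|^4\phi,\phi\}_m=0$ to rewrite $\{\ef,\uhi\}_m$ as a sum of terms each carrying a favourable factor, and then to estimate the resulting spacetime integrals directly, using only that $|\nabla a|\lesssim 1$ is supported on $\{|x-y|\le N^{-1}\}$ (recall \eqref{E:JRN}), together with the a priori bounds of Corollary~\ref{C:all bounds} --- in particular the local mass estimate \eqref{E:22 bound} --- the smallness \eqref{E:etas job}, and the energy bound $\|u\|_{L^\infty_tL^6_x}\lesssim_u 1$. Unlike the momentum bracket of Lemma~\ref{L:P brack}, the mass bracket carries no spare derivative to integrate by parts against $\nabla a$, so this is essentially a brute-force H\"older argument.

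The first step is to expand the bracket. Since $\ef=\ph F(u)-F(\uhi)$ and $\Im(F(u)\bar u)=\Im(F(\ulo)\overline{\ulo})=\Im(F(\uhi)\overline{\uhi})=0$, writing $F(u)=F(\ulo)+\Oh(\uhi u^4)$ and $\ph=1-\pl$ gives
\[
\{\ef,\uhi\}_m=-\Im\bigl[(F(u)-F(\ulo))\overline{\ulo}\bigr]-\Im\bigl[\pl F(\ulo)\,\overline{\uhi}\bigr]-\Im\bigl[\pl\Oh(\uhi u^4)\,\overline{\uhi}\bigr]=:\mathcal E_1+\mathcal E_2+\mathcal E_3 .
\]
Here $\mathcal E_1=\Oh(\uhi\ulo u^4)$ carries a factor of $\ulo$, $\mathcal E_2=\Oh\bigl(\uhi\,\pl\Oh(\ulo^5)\bigr)$ carries five, and $\mathcal E_3=\Oh\bigl(\uhi\,\pl\Oh(\uhi u^4)\bigr)$ carries a second factor of $\uhi$ as well as the projection $\pl$, which furnishes a Bernstein gain of a power of $N$.

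The second step is to estimate the three contributions to the triple integral. In each case I bound $|\nabla a(x-y)|\lesssim1$ and restrict to the ball $|x-y|\le N^{-1}$. The factors $\nabla\uhi(x)$ and $\uhi(x)$ always appear; I estimate $\nabla\uhi$ through \eqref{grads in Strich} (in $L^\infty_tL^2_x$ or $L^2_tL^6_x$), and handle $\uhi(x)$ either by a Cauchy--Schwarz over the ball --- which converts $\int_{|x-y|\le N^{-1}}|\uhi(x)|^2\,dx$ into the local mass, controlled in time by \eqref{E:22 bound} --- or by keeping it as a factor of $\|\uhi\|_{L^4_{t,x}}$ for later absorption. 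In the $y$-variable: for $\mathcal E_1$ and $\mathcal E_2$ I peel off a factor of $\ulo$, which is $O(\eta)$ in $L^\infty_tL^6_x$ by \eqref{E:etas job} and $O(\eta^{1/2})$ in $L^4_tL^\infty_x$ by \eqref{ulo in Strich}, estimating the remaining $u$- and $\uhi$-factors via the energy and \eqref{grads in Strich}; for $\mathcal E_3$ I peel off the inner $\uhi$ instead, convert $\pl$ into a power of $N$ by Bernstein, estimate the surviving $\Oh(\uhi u^4)$ by \eqref{grads in Strich} and the energy, and spend one more factor of $\uhi$ on the local mass. Finally, Young's inequality turns the various powers $\|\uhi\|_{L^4_{t,x}}^{a}$ with $a<4$ into $\|\uhi\|_{L^4_{t,x}}^4$ plus admissible errors --- the balancing here is what produces the exponent $\eta^{1/4}$ in the statement --- while every surviving factor $(1+N^3K)^{1/q}$ pairs with an $N^{-3}$, coming either from $\|\uhi\|_{L^\infty_tL^2_x}^2\sim\eta^2N^{-2}$ (cf.\ \eqref{dumb y}) or from the volume $\sim N^{-3}$ of the ball, to yield $N^{-3}+K$.

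The main obstacle is the energy-subcritical ($L^2$-critical) scaling of the mass bracket combined with the fact that $\uhi$ need not lie in $L^2_x$ uniformly in time: smallness of $\ulo$ in $\dot H^1_x$ cannot, on its own, absorb a term of this scaling, and there is no cancellation to exploit beyond $\{F(\phi),\phi\}_m=0$. The resolution is precisely the local $L^2$ control \eqref{E:22 bound}, which is where the double Duhamel trick (Proposition~\ref{P:DDS}) and the long-time Strichartz estimates (Theorem~\ref{T:LTS}) enter; the genuinely delicate point is the bookkeeping --- deciding, for each of $\mathcal E_1,\mathcal E_2,\mathcal E_3$, how many factors of $\uhi$ may be charged to the local mass and how many must be kept for $\|\uhi\|_{L^4_{t,x}}$, and verifying that the time exponents sum correctly over the possibly infinite interval $I$.
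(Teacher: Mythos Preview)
Your decomposition is algebraically correct and equivalent to the paper's, and your approach is viable, but it is genuinely different from what the paper does in two respects.

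First, the paper does not use the local mass bound \eqref{E:22 bound} at all in this lemma. For the hardest sub-term $\Oh(\ulo\uhi^5)$ (which the paper also isolates, inside $\{\ph[F(u)-F(\uhi)-F(\ulo)],\uhi\}_m$), the paper exploits the compact support of $\nabla a$ via Young's inequality, using $\|\nabla a\|_{L^4_x}\sim (e^JR)^{3/4}=N^{-3/4}$ together with three factors of $\|\uhi\|_{L^4_{t,x}}$ (one from the $x$-side, two from the $y$-side) and $\|\ulo\|_{L^4_tL^\infty_x}$; Young's inequality then produces $\eta^{1/2}(\|\uhi\|_{L^4_{t,x}}^4+N^{-3}+K)$. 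Your route --- Cauchy--Schwarz in $x$ over the ball $|x-y|\le N^{-1}$ to produce the local mass, then \eqref{E:22 bound} for $L^2_t$ control --- works too, but note that to close the $y$-integral in $L^2_tL^1_y$ you will need $\|\uhi\|_{L^4_tL^3_x}\lesssim_u N^{-1}(1+N^3K)^{1/4}$ from \eqref{grads in Strich} (with $s=0$, $q=4$): the naive split $\|\uhi^5\ulo\|_{L^1_y}\le\|\uhi\|_{L^4}^2\|\uhi\|_{L^6}^3\|\ulo\|_{L^\infty}$ only lands in $L^{4/3}_t$, which is too weak to pair with the $L^2_t$ control on the local mass. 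This is within your stated toolkit but is the ``genuinely delicate bookkeeping'' you allude to.

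Second, for the piece containing $\pl F(\uhi)$ (inside your $\mathcal E_3$), the paper writes $\uhi=\diverge(\nabla\Delta^{-1}\uhi)$ and integrates by parts, bringing in $\||\nabla|^{-1}\uhi\|_{L^2_tL^6_x}$ and the Hardy--Littlewood--Sobolev inequality for $|x|^{-1}*|\uhi\nabla\uhi|$. Your route --- Bernstein on $\pl$ together with the local mass --- avoids this device entirely and is arguably more direct.

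In short: both arguments exploit the truncation $e^JRN=1$, but the paper encodes it as $\|\nabla a\|_{L^4_x}\sim N^{-3/4}$ while you encode it through \eqref{E:22 bound} at radius $\rho=N^{-1}$. Your approach gives the local mass estimate a second application (beyond the mass--mass lemma), at the cost of slightly more intricate H\"older bookkeeping; the paper's approach is more hands-on but needs the integration-by-parts trick.
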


\begin{proof}
Exploiting the cancellation noted above and
$$
F(u) - F(\uhi) - F(\ulo) = \Oh\bigl( \ulo u_\hi u^3 \bigr),
$$
we write
\begin{align}
\{ & \ef, \uhi\}_m = \{\ph F(u) - F(\uhi), \uhi\}_m \notag \\
&=  \{\ph [ F(u) - F(\uhi) - F(\ulo)] , \uhi\}_m - \{\pl F(\uhi) , \uhi\}_m + \{\ph F(\ulo), \uhi\}_m \notag \\
&= \Oh\bigl( \ulo u_\hi^2 u^3 \bigr) - \{\pl F(\uhi) , \uhi\}_m + \{\ph F(\ulo), \uhi\}_m. \label{mass expansion}
\end{align}
We will treat their contributions in reverse order (right to left) since this corresponds to increasing complexity.

The contribution of the third term is easily seen to be bounded by
\begin{align*}
\|\uhi\nabla \uhi\|_{L^\infty_t L^1_x} \|\uhi \ph F(\ulo)\|_{L_{t,x}^1}
&\lesssim \|\nabla \uhi\|_{L_t^\infty L_x^2} \|\uhi\|_{L_t^\infty L_x^2}^2  N^{-1} \|\nabla F(\ulo)\|_{L_t^1L_x^2}\\
&\lesssim_u \eta^2 N^{-3} \|\nabla \ulo\|_{L_t^2L_x^6} \|\ulo\|_{L_t^4L_x^\infty}^2\|\ulo\|_{L_t^\infty L_x^6}^2\\
&\lesssim_u \eta^2 (N^{-3} + K).
\end{align*}

For the second term in \eqref{mass expansion} we write $\uhi = \diverge(\nabla\Delta^{-1}\uhi)$ and integrate by parts.  This yields two
contributions to LHS\eqref{M bound}, which we bound as follows:
\begin{align*}
\|\uhi\nabla \uhi\|_{L^\infty_t L^1_x} & \bigl\| |\nabla|^{-1}\uhi\bigr\|_{L_t^2 L_x^6} \bigl\| \nabla \pl F(\uhi)\bigr\|_{L^2_t L^{6/5}_x} \\
&\lesssim_u \|\uhi\|_{L_t^\infty L_x^2} N^{-2} (1+N^3K)^{1/2} N^{3/2} \|F(\uhi)\|_{L_t^2 L_x^1}\\
&\lesssim_u \eta (N^{-3}+K)^{1/2} \|\uhi\|_{L_{t,x}^4}^2 \|\uhi\|_{L_t^\infty L_x^6}^3\\
&\lesssim_u \eta \bigl( \|\uhi\|_{L_{t,x}^4}^4 + N^{-3}+K \bigr)
\end{align*}
and
\begin{align*}
\bigl\| |x|^{-1} &* |\uhi\nabla\uhi| \bigr\|_{L_t^4 L_x^{12}} \bigl\||\nabla|^{-1}\uhi\bigr\|_{L_t^2L_x^6}  \bigl\| \pl F(\uhi)\|_{L_t^{4}L_x^{4/3}} \\
&\lesssim  \|\nabla \uhi\|_{L_t^\infty L_x^2} \|\uhi\|_{L_{t,x}^4} N^{-2}(1+N^3K)^{1/2} N^{3/4} \|  F(\uhi)\|_{L_t^4L_x^1} \\
&\lesssim_u  \|\uhi\|_{L_{t,x}^4} N^{1/4} (N^{-3}+K)^{1/2} \|\uhi\|_{L_{t,x}^4} \|\uhi\|_{L_t^\infty L_x^6}^{15/4} \|\uhi\|_{L_t^\infty L_x^2}^{1/4} \\
&\lesssim_u  \eta^{1/4} \bigl( \|\uhi\|_{L_{t,x}^4}^4 + N^{-3}+K \bigr).
\end{align*}

We now move to the first term in \eqref{mass expansion}.  This term, or more precisely, the term $\Oh(\ulo u_\hi^5)$ contained therein, is the reason we needed
to introduce the spatial truncation on $a$.  Using $Re^J=N^{-1}$, we estimate this term via
\begin{align*}
\|\nabla\uhi\|_{L^\infty_t L^2_x} \|\uhi\|_{L^4_{t,x}} & \|\nabla a\|_{L^\infty_t L^4_x}  \|\uhi\|_{L_{t,x}^4}^2 \|\ulo\|_{L_t^4 L_x^\infty} \|u\|_{L_t^\infty L_x^6}^3 \\
&\lesssim_u \|\uhi\|_{L^4_{t,x}}^3 (e^JR)^{3/4} \eta^{1/2} ( 1+N^{3}K)^{1/4} \\
&\lesssim_u  \eta^{1/2} \bigl(\|\uhi\|_{L^4_{t,x}}^4 + N^{-3} + K \bigr).
\end{align*}
This completes the control of the mass bracket terms.
\end{proof}

We are now ready to complete the

\begin{proof}[Proof of Theorem~\ref{T:flim}]
From H\"older's inequality, we see that when $\phi=\uhi$ and $a$ is as above, the interaction Morawetz quantity defined in \eqref{E:IM defn} obeys
$$
\sup_{t\in I} |M(t)| \leq 2 \|\uhi\|_{L^\infty_tL^2_x(I\times\R^3)}^3 \|\nabla\uhi\|_{L^\infty_tL^2_x(I\times\R^3)} \lesssim_u \eta^3 N^{-3},
$$
provided, of course, that $N$ is small enough so that \eqref{E:etas job} holds.  Applying the Fundamental Theorem of Calculus to the identity
in Proposition~\ref{P:Interact} and putting together all the lemmas in this section, we reach the conclusion that
\begin{align*}
8\pi \|\uhi\|_{L^4_{t,x}(I\times\R^3)}^4  + B_I &\lesssim_u (\eps+\tfrac1{J_0}) B_I + \eta^{\frac14} \|\uhi\|_{L^4_{t,x}(I\times\R^3)}^4 \\
&\quad +   \bigl( \eta^{\frac14} + \tfrac\eta\eps + \tfrac{J_0^2}{J} + \eta^2 \tfrac{e^{2J}}J\bigr) \bigl(N^{-3} + K \bigr).
\end{align*}
We remind the reader that this estimate is uniform in $\eps,\eta\in(0,1]$, but was derived under several overarching hypotheses: \eqref{E:etas job}, $NRe^J=1$, and $J\geq 2J_0\geq2$.

We now choose our parameters as follows: First $\eps$ and $J_0^{-1}$ are made small enough so that the $B_I$ term on the RHS can be absorbed by that on the LHS.  Next $\eta$ and $J^{-1}$ are
chosen small enough both to handle the $L^4_{t,x}$ on the RHS and to ensure that the prefactor in front of $(N^{-3}+K)$ is smaller than $\eta_0$.  We now choose $R$ and $N^{-1}$ large
enough so that \eqref{E:etas job} holds and then further increase $N^{-1}$ or $R$ so as to ensure $NRe^J=1$.

To fully justify bringing the two terms across the inequality, we need to verify that they are indeed finite.  This is easily done:
\begin{align*}
\| \uhi \|_{L_{t,x}^4}^4 \lesssim \| |\nabla|^{1/4} u_{\geq N}\|_{L^4_t L^3_x}^4 \lesssim N^{-3} \|\nabla\uhi\|_{L_t^4L_x^3}^4
  &\lesssim_u  N^{-3} + N^{-3}\int_I N(t)^2\,dt,
\end{align*}
by Sobolev embedding, Bernstein, and Lemma~\ref{L:ST-N(t)}.  Similarly,
\begin{align*}
B_I &\lesssim \bigl\| |x|^{-1}*|\uhi|^2 \bigr\|_{L^4_t L^{12}_x} \|\uhi\|_{L^\infty_t L^2_x}^{5/4} \|\uhi\|_{L^{19/3}_t L^{114/7}_x}^{19/4} \\
    &\lesssim \|\uhi\|_{L^4_t L^4_x} \|\uhi\|_{L^\infty_t L^2_x}^{9/4} \|\nabla \uhi\|_{L^{19/3}_t L^{38/15}_x}^{19/4}  \\
    &\lesssim_u  N^{-3} + N^{-3}\int_I N(t)^2\,dt,
\end{align*}
by also using the Hardy--Littlewood--Sobolev inequality.
\end{proof}

\begin{remark}\label{R:dream}
As noted in the course of the proof, the necessity of truncating $a(x)$ stems from our inability to estimate one term.
It would be possible to give a much simpler proof if we could show (a priori) that
\begin{equation}\label{51dream}
\| u_\hi^5 u_\lo \|_{L^1_{t,x}} \lesssim_u N^{-2} + \eta N K,
\end{equation}
for $N$ sufficiently small.  We will now describe what appears to be an intrinsic obstacle to doing this.

With current technology, proving \eqref{51dream} without using the interaction Morawetz identity seems to require
proving that it also holds for almost periodic solutions of the \emph{focusing} equation; however, the static solution
$W$ described in Remark~\ref{R:no better} shows \eqref{51dream} does not hold in that setting. From  \eqref{W hat} and simple arguments,
\begin{align*}
\lim_{N\to0} N^{-1} \int_{\R^3} \bigl[W_{> N}(x)\bigr]^5 W_{\leq N}(x) \,dx &= \lim_{N\to0} N^{-1} \int_{\R^3} W(x)^5 W_{\leq N}(x) \,dx \\
&= \lim_{N\to0} N^{-1}\int_{\R^3} |\xi|^2 |\hat W(\xi)|^2 \varphi(\xi/N) \,d\xi \sim 1.
\end{align*}
As $N(t)\equiv1$, it follows that $K=|I|$ and so $\| W_\hi^5 W_\lo \|_{L^1_{t,x}} \gtrsim N K$ for $N$ small.
\end{remark}

%
%
%
%

\section{Impossibility of quasi-solitons}\label{S:no soliton}

In this section, we show that the second type of almost periodic solution described in Theorem~\ref{T:enemies}, namely, those with $\int_0^{\Tmax}N(t)^{-1}\,dt=\infty$,
cannot exist.  This is because their existence is inconsistent with the interaction Morawetz estimate obtained in the last section.

\begin{theorem}[No quasi-solitons]\label{T:no soliton}
{\hskip 0em plus 1em minus 0em}There are no almost periodic solutions $u:[0, \Tmax)\times\R^3\to \C$ to \eqref{nls} with $N(t)\equiv N_k \geq 1$ on each characteristic interval $J_k\subset [0, \Tmax)$
which satisfy $\|u\|_{L^{10}_{t,x}( [0, \Tmax) \times \R^3)} =+\infty$ and
\begin{align}\label{infinite int}
\int_0^{\Tmax}N(t)^{-1}\,dt=\infty.
\end{align}
\end{theorem}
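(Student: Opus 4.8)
The plan is to play the frequency-localized interaction Morawetz estimate of Theorem~\ref{T:flim}, which bounds $\int_I\int|u_{>N}|^4$ from \emph{above} by $\eta_0(N^{-3}+K)$, against a matching lower bound $\int_I\int|u_{>N}|^4\gtrsim_u K$ extracted from almost periodicity; since $K=K(I):=\int_IN(t)^{-1}\,dt$ can be made arbitrarily large by \eqref{infinite int}, the two are incompatible once $\eta_0$ is chosen small. Throughout, $I$ ranges over finite unions of contiguous characteristic intervals $J_k\subset[0,\Tmax)$ with $0$ as left endpoint; because $[0,\Tmax)$ is tiled by the $J_k$ and $\int_0^{\Tmax}N(t)^{-1}\,dt=\infty$, we may pick such $I=I_n$ with $K(I_n)\to\infty$.

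\emph{Step 1: the lower bound.} I claim there are $N_0=N_0(u)>0$ and $c_0=c_0(u)>0$ so that $\int_I\int_{\R^3}|u_{>N}(t,x)|^4\,dx\,dt\geq c_0K(I)$ for every $0<N\leq N_0$ and every $I$ as above. Fix $t$ in a characteristic interval $J_k$, so $N(t)\equiv N_k\geq1$. Conservation of energy together with $\int|\nabla u(t)|^2\lesssim_u\int|u(t)|^6$ (Remark~\ref{R:small freq}) and $u\not\equiv0$ give $\|u(t)\|_{L^6_x}\gtrsim_u1$ uniformly in $t$. Having fixed a small $\eta=\eta(u)$, the tightness and equicontinuity in Definition~\ref{D:ap} furnish $c=c(u)$ and $C=C(u)$ with $\|u_{\leq cN_k}(t)\|_{L^6_x}=\|u_{\leq cN(t)}(t)\|_{L^6_x}\leq\eta$ and $\|u_{>CN_k}(t)\|_{L^6_x}\lesssim\|\nabla u_{>CN_k}(t)\|_{L^2_x}\leq\eta^{1/2}$, whence, for $\eta$ small depending on $u$,
\[
\bigl\|u_{cN_k<\cdot\leq CN_k}(t)\bigr\|_{L^6_x}\gtrsim_u1.
\]
This piece has frequency support in an annulus of width $\sim_uN_k$, so Bernstein gives $\|u_{cN_k<\cdot\leq CN_k}(t)\|_{L^6_x}\lesssim_uN_k^{1/4}\|u_{cN_k<\cdot\leq CN_k}(t)\|_{L^4_x}$; and since $N_k\geq1$, for $N$ small (depending on $u$) one has $u_{cN_k<\cdot\leq CN_k}(t)=P_{cN_k<\cdot\leq CN_k}u_{>N}(t)$, so $\|u_{cN_k<\cdot\leq CN_k}(t)\|_{L^4_x}\lesssim\|u_{>N}(t)\|_{L^4_x}$ by boundedness of the Littlewood--Paley projections. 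Combining, $\|u_{>N}(t)\|_{L^4_x}^4\gtrsim_uN_k^{-1}$ for all $t\in J_k$; integrating over $J_k$ and using $|J_k|\sim_uN_k^{-2}$ yields $\int_{J_k}\int|u_{>N}|^4\gtrsim_uN_k^{-3}\sim_u\int_{J_k}N(t)^{-1}\,dt$, and summing over the $J_k\subset I$ proves the claim.

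\emph{Step 2: conclusion, and the main point.} Choose $\eta_0=\eta_0(u)\in(0,1]$ so small that $C_u\eta_0<c_0/2$, where $c_0$ is as in Step~1 and $C_u$ is the ($\eta_0$- and $I$-independent) implicit constant in \eqref{E:B bound}. Theorem~\ref{T:flim} then yields $N_1=N_1(\eta_0)=N_1(u)$, which we may also take $\leq N_0$, such that $\int_I\int|u_{>N_1}|^4\leq C_u\eta_0(N_1^{-3}+K(I))$ for all admissible $I$. Taking $I=I_n$ with $K(I_n)\to\infty$, then $n$ large enough that $K(I_n)\geq N_1^{-3}$, Step~1 and Theorem~\ref{T:flim} give
\[
c_0K(I_n)\leq\int_{I_n}\int_{\R^3}|u_{>N_1}|^4\leq C_u\eta_0\bigl(N_1^{-3}+K(I_n)\bigr)\leq2C_u\eta_0K(I_n)<c_0K(I_n),
\]
a contradiction. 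The substance lies entirely in Step~1: producing a uniform lower bound $\gtrsim_uN_k^{-1}$ for the spatial $L^4_x$ mass of $u_{>N}$ on each characteristic interval. The delicate point is that one cannot afford the low-frequency portion of $u_{>N}$ in $L^4_x$, which forces the argument onto the band of frequencies $\sim_uN_k$ and thus requires the full tightness-plus-equicontinuity of almost periodicity rather than mere $\dot H^1_x$-boundedness; Step~2 is then just a matter of correctly ordering the choices of $\eta_0$, $N$, and $I$.
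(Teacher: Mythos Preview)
Your proof is correct and follows the same strategy as the paper's: derive a lower bound $\int_I\int|u_{>N}|^4\gtrsim_u K$ from almost periodicity and play it against the Morawetz upper bound of Theorem~\ref{T:flim}, choosing $\eta_0$ small and then $I$ large. The only difference is in how the lower bound is extracted: the paper uses spatial concentration --- the scale-invariant functional $N(t)\int_{|x-x(t)|\leq C/N(t)}|u(t,x)|^4\,dx$ is bounded below by compactness of the orbit, and the low-frequency contribution is removed via H\"older (passing to $L^6_x$, where $u_{\leq N}$ is small) --- whereas you first concentrate $\|u(t)\|_{L^6_x}$ in a frequency band $\sim N(t)$ and then descend to $L^4_x$ by Bernstein; both are short consequences of almost periodicity and lead to the same inequality $K\lesssim_u\eta_0(N^{-3}+K)$.
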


\begin{proof}
We argue by contradiction and assume there exists such a solution $u$.

First we observe that there exists $C(u)>0$ such that
\begin{align}\label{concentration}
N(t) \int_{|x-x(t)|\leq C(u)/N(t)} |u(t,x)|^4\, dx \geq 1/C(u)
\end{align}
uniformly for $t\in [0, \Tmax)$.  That this is true for a single time $t$ follows from the fact that $u(t)$ is not identically zero.  To upgrade this to a statement uniform in time,
we use the fact that $u$ is almost periodic.  More precisely, we note that the left-hand side of \eqref{concentration} is both scale- and translation-invariant and that the
map $u(t)\mapsto \text{LHS} \eqref{concentration}$ is continuous on $L_x^6$ and hence also on $\dot H^1_x$.

Moreover, by H\"older's inequality,
\begin{align*}
N(t) \int_{|x-x(t)|\leq C(u)/N(t)} |u_{\leq N}(t,x)|^4\, dx \lesssim_u \|u_{\leq N}(t)\|_{L_x^6}^4 \quad \text{for any } N>0,
\end{align*}
uniformly for $t\in [0, \Tmax)$.  Combining this with \eqref{concentration} and Theorem~\ref{T:flim} shows that for each $\eta_0 >0$ there exists some $N=N(\eta_0)$ sufficiently small so that
\begin{align*}
\int_I N(t)^{-1}\, dt \lesssim_u \eta_0 N^{-3} + \eta_0 \int_I N(t)^{-1}\, dt
\end{align*}
uniformly for time intervals $I\subset [0, \Tmax)$ that are a union of characteristic subintervals $J_k$.  In particular, we may choose $\eta_0$ small enough to defeat the implicit
constant in this inequality and so deduce that
\begin{align*}
\int_0^\Tmax N(t)^{-1}\, dt = \lim_{T\nearrow \Tmax} \int_0^T N(t)^{-1}\, dt \lesssim_u 1,
\end{align*}
which contradicts \eqref{infinite int}.
\end{proof}

%
%
%
%

\end{document}